\theoremstyle{thmstyletwo}%
\newtheorem{theorem}{Theorem}
\newtheorem{proposition}[theorem]{Proposition}%
\newtheorem{remark}{Remark}%
\newtheorem{lemma}{Lemma}%
\newtheorem{assumption}{Assumption}%
\newtheorem{corollary}{Corollary}%
\numberwithin{equation}{section}
\providecommand{\jumptmp}[2]{#1\llbracket{#2}#1\rrbracket}
\providecommand{\jump}[1]{\jumptmp{}{#1}}
\def\tens#1{\pmb{\mathsf{#1}}}
\def\vec#1{\boldsymbol{#1}}
\def\R{\mathbb{R}}
\def\P{\mathbb{P}}
\def\Na{\mathbb{N}}
\def\Rd{\mathbb{R}^d}
\def\sym{\mathop{\mathrm{sym}}\nolimits}
\def\tr{\mathop{\mathrm{tr}}\nolimits}
\def\Rds{\mathbb{R}^{d \times d}_{\sym}}
\def\Rdst{\mathbb{R}^{d \times d}_{\sym,\tr}}
\def\fp{\,{:}\,} 
\def\rmdiv{\mathop{\mathrm{div}}\nolimits} 
\def\d{{\rm d}}
\def\Du{\BD(\bu)}
\def\bn{\vec{n}}
\def\bu{\vec{u}}
\def\bv{\vec{v}}
\def\bw{\vec{w}}
\def\BA{\tens{A}}
\def\BB{\tens{B}}
\def\BD{\tens{D}}
\def\BG{\tens{G}}
\def\BH{\tens{H}}
\def\BS{\tens{S}}
\def\BT{\tens{T}}
\def\BQ{\tens{Q}}
\def\LinftyT{L^\infty(0,T;L^2(\Omega)^d)}
\def\Linfty{L^\infty(I;L^2(\Omega)^d)}
\newcommand{\Lebs}[4][\Omega]{L^{#2}_{#3}(#1)^{#4}}
\newcommand{\Lp}[2][\Omega]{\Lebs[#1]{#2}{}{}}
\newcommand{\Lmean}[2][\Omega]{\Lebs[#1]{#2}{0}{}}
\def\Vh{\mathbb{V}^h}
\def\Vhdiv{\mathbb{V}^h_{\mathrm{div}}}
\def\Vht{\mathbb{V}^{h,\tau}}
\def\Mh{\mathbb{M}^h}
\def\Mht{\mathbb{M}^{h,\tau}}
\def\GRdt{\mu^{\mathrm{GR}}_{k_t+1}(\d t)}
\def\fD{\tens{\mathcal{D}}}
\def\fS{\tens{\mathcal{S}}}
\def\norm#1{\left|\!\left| #1 \right|\!\right|}
\def\dgrad{\mathcal{G}_h}
\def\ddiv{\mathcal{D}iv_h}
\def\Th{\mathcal{T}_{h}}
\def\Itau{\mathcal{I}_{\tau}}
\def\intF{\Gamma_h^{i}}
\def\Fh{\Gamma_h}
\def\avg#1{\{\!\!\{ #1 \}\!\!\}}
\def\leftjump{\left[\!\left[}
\def\rightjump{\right]\!\right]}
\definecolor{morado}{rgb}{0.65, 0.14, 0.78}
\newcommand{\WDG}{W^{1,p}(\mathcal{T}_h)}
\begin{document}

\DOI{DOI HERE}
\copyrightyear{2021}
\vol{00}
\pubyear{2021}
\access{Advance Access Publication Date: Day Month Year}
\appnotes{Paper}
\copyrightstatement{Published by Oxford University Press on behalf of the Institute of Mathematics and its Applications. All rights reserved.}
\firstpage{1}


\title[On the stability and convergence of DG schemes for incompressible flow]{On the stability and convergence of Discontinuous Galerkin schemes\\ for incompressible flow}

\author{Pablo Alexei Gazca--Orozco\ORCID{0000-0001-9859-4238} and Alex Kaltenbach*\ORCID{0000-0001-6478-7963}
\address{\orgdiv{Department of Mathematics}, \orgname{University of Freiburg}, \orgaddress{\street{Ernst--Zermelo--Stra\ss e}, \postcode{79104}, \state{Freiburg}, \country{Germany}}}}

\authormark{Pablo Alexei Gazca--Orozco and Alex Kaltenbach}

\corresp[*]{Corresponding author: \href{alex.kaltenbach@mathematik.uni-freiburg.de}{alex.kaltenbach@mathematik.uni-freiburg.de}}

\received{Date}{0}{Year}
\revised{Date}{0}{Year}
\accepted{Date}{0}{Year}


\abstract{The property that the velocity $\bu$ belongs to $\LinftyT$~is~an~essential~requirement~in~the~definition of energy solutions of models for incompressible fluids. It is, therefore, highly desirable that the solutions produced by discretisation methods are uniformly stable in the $\LinftyT$-norm. In this work, we establish that this is indeed the case for Discontinuous Galerkin (DG) discretisations (in time and space) of non-Newtonian models with $p$-structure, assuming that $p\geq \frac{3d+2}{d+2}$; the time discretisation is equivalent to the RadauIIA Implicit Runge--Kutta method. \textcolor{black}{We also prove (weak) convergence of the numerical scheme to the weak solution of the system; this type of convergence result for schemes based on quadrature seems to be new.} As an auxiliary result, we also derive  Gagliardo--Nirenberg-type inequalities on DG spaces, which might be of~independent~interest.}
\keywords{Discontinuous Galerkin; non-Newtonian implicitly constituted models; stability; convergence.}

\maketitle

\section{Introduction}

\subsection{Description of the model}

\qquad In this paper, we analyse the stability of non-conforming numerical schemes~for~a~system~describing the evolution of an incompressible non-Newtonian fluid. Namely, for a given spatial domain $\Omega \subseteq \R^d$, with $d\in\{2,3\}$, and a final time $0<T<\infty$, in the continuous setting, one looks for~a~velocity~vector~field $\bu\colon [0,T]\times \overline{\Omega} \to \R^d$, a pressure field $\pi\colon (0,T)\times \Omega \to \R$, and~a~(symmetric~and~traceless) stress tensor $\BS\colon (0,T)\times \Omega \to \Rdst$ such that
\begin{subequations}
\label{eq:continuous_PDE}
\begin{alignat}{2}
\begin{aligned}
\label{eq:continuous_balance}
\partial_t \bu - \rmdiv\BS + \rmdiv(\bu \otimes \bu) + \nabla \pi
 &= \bm{f} \qquad \quad &&\text{ in } (0,T)\times \Omega\,,\\
 \rmdiv\bu &= 0 \qquad \quad &&\text{ in }{(0,T)\times \Omega}\,,\\
  \bu &= \bm{0} \qquad \quad &&\text{ on }(0,T)\times \partial\Omega\,,\\
 \bu(0,\cdot) &= \bu_0 \qquad \quad &&\text{ in }\Omega\,,
\end{aligned}
\end{alignat}
where the initial velocity vector field $\bu_0\colon \Omega \to \Rd$ and the body force $\bm{f}\colon (0,T)\times \Omega \to \Rd$~are~given. To close the system, we consider an implicit constitutive law of the form
\begin{align}
	\label{eq:continuous_constitutive_law}
	\BG(\BS, \Du) = \bm{0} \qquad
	\text{ in } (0,T)\times \Omega\,,
\end{align}
\end{subequations}
where $\Du = \frac{1}{2}(\nabla\bu + \nabla\bu^\top)\colon(0,T)\times \Omega\to\Rds $ denotes the strain rate tensor, i.e., symmetric part of the velocity gradient, and $\BG\colon \Rds \times \Rds \to \Rds$ is a locally Lipschitz function such that $\BG(\bm{0},\bm{0})=\bm{0}$ and such that it defines a $p$-coercive~graph~for~$p>1$, in the sense that there exist two constants $c_1, c_2>0$ such that for every $(\BA,\BB)\in \Rds\times \Rds$, it holds that
\begin{align}\label{eq:coercivity}
\BG(\BA,\BB) = \bm{0} \qquad
\Longrightarrow \qquad
\BA\fp\BB \geq c_1(|\BA|^{p'} + |\BB|^p) - c_2\,.
\end{align}
\textcolor{black}{Here $p'\coloneqq \tfrac{p}{p-1}$ denotes the Hölder conjugate exponent of $p$.}
Such a class of constitutive relations captures many models that are popular in applications. Prototypical examples that, in addition, define a \emph{monotone graph} include fluids with power-law structure
\begin{subequations}\label{eq:power_law}
	\begin{gather}
\BG(\BS,\BD) \coloneqq  \BS - K_\star(1 + \Gamma_\star |\BD|^2)^{\smash{\frac{p-2}{2}}}\BD
\qquad K_\star,\Gamma_\star>0\,,\; p>1\,, \label{eq:power_lawA}\\
\BG(\BS,\BD) \coloneqq  K_\star(1 + \Gamma_\star |\BS|^2)^{\smash{\frac{p'-2}{2}}}\BS - \BD
\qquad K_\star,\Gamma_\star>0\,,\; p>1\,,
\end{gather}
\end{subequations}
or viscoplastic Bingham fluids
\begin{align}\label{eq:bingham_implicit}
	\BG(\BS,\BD) \coloneqq  (|\BS| -  \tau_\star)^+ \BS
- 2\nu_\star(\tau_\star + (|\BS|-\tau_\star)^+)\BD
\qquad \nu_\star > 0\,,\; \tau_\star \geq 0\,,
\end{align}
where $(\cdot)^+ \!\coloneqq \! (s\mapsto\max\{s,0\})\colon \!\mathbb{R}\!\to\! \mathbb{R}$. This relation is more commonly written~in~terms~of~the~dichotomy
\begin{align}\label{eq:bingham_dichotomy}
  \renewcommand{\arraystretch}{1.2}
\left\{
	\begin{array}{ccc}
		|\BS|\leq \tau_\star & \Longleftrightarrow & \BD = \bm{0}\,, \\[1mm]
|\BS|> \tau_\star & \Longleftrightarrow & \BS = 2\nu_\star \BD + \displaystyle\tfrac{\tau_\star}{|\BD|}\BD\,.
	\end{array}
\right.
\end{align}
Note that while it is not possible to write the relation \eqref{eq:bingham_dichotomy} in terms of a single valued function~$\BS(\BD)$, within the implicit framework, one can express it in terms of elementary functions without issue. We note further that the Newtonian constitutive relation is of course also considered here (e.g., take $\tau_\star = 0$~in \eqref{eq:bingham_implicit} or $p=2$ in \eqref{eq:power_law}). We refer to \cite{BMR.2020,BMM.2021}, for an in-depth discussion of the different models that can be described with such monotone constitutive relations and the corresponding PDE analysis. 

The framework of implicit constitutive relations also includes non-monotone relations that can describe hysteretic behaviour, e.g.,
\begin{align}\label{eq:non_monotone}
	\BG(\BS,\BD) = \big[a(1 + b|\BS|^2)^{\smash{\frac{q-2}{2}}} + c\big]\BS
	- \BD 
	\qquad a,b,c>0\,,\; q\in \R\, .
\end{align}
which for $q<0$, in general, is \emph{non-monotone} (see \cite{LRR.2013} for details), but has, nevertheless,~been~shown to be thermodynamically consistent \cite{JP.2018}; see also \cite{JMPT.2019}, for insightful numerical experiments.

In this work, we concentrate on non-conforming discretisations of the problem \eqref{eq:continuous_PDE}; namely, a discontinuous Galerkin in time method $\mathrm{dG}(k)$ and a Discontinuous Galerkin discretisation in space that can, in particular, be taken to be a Local Discontinuous Galerkin (LDG) method or an Interior Penalty (IP) method (possibly incomplete). The dG time discretisation we consider here can be shown to be equivalent to a RadauIIA Implicit Runge--Kutta scheme \cite{MN.2006}, which, due to its L-stability, is popular in applications modelled by parabolic problems; the Discontinuous Galerkin methodology allows the straightforward development of arbitrary high-order time discretisations and has been intensely studied in recent decades; see, e.g., \cite{SS.2000,AM.2004,SW.2010,Tho.2007,SW.2019}.
Regarding the spatial discretisation, in the case of incompressible fluid models such as \eqref{eq:continuous_PDE}, one has the additional concern of the preservation of the divergence-free constraint  \eqref{eq:continuous_balance}$_2$ at the discrete level. In recent years, the importance of this has been recognised and schemes that lead to point-wise divergence-free approximations~have~many~desirable~qualities, such as pressure robust error estimates; see \cite{JLMNR.2017} for more details. One of the main ways of obtaining exactly divergence-free approximations is to relax the conformity requirement and employ a finite element space for the velocity that is $H(\rmdiv;\Omega)$-conforming only. This non-conformity is, then, handled by including DG terms in the formulation; see, e.g., \cite{CKS.2007,SLLL.2018} for the Newtonian case. While this is one of our main motivations, here we will analyse more general discretisations that might not~enforce~the~divergence~constraint~exactly.

Given the highly non-linear nature of the models considered here, deriving~error~estimates~seems out of reach. In such cases, one can turn instead to proving~(weak)~convergence~(possibly~of~a~subsequence) to minimal regularity solutions by using compactness arguments. A crucial step in such arguments is to establish stability of the corresponding discrete scheme, from which one then extracts (weakly) converging subsequences. This approach was taken in \cite{ST.2019,FGS.2020} for conforming-in-space discretisations of implicitly constituted models; for the case with explicit constitutive relations (and implicit Euler in time), see \cite{BKR.2021,KR.2022}. In the setting considered here, the coercivity condition \eqref{eq:coercivity} results in a stability estimate that guarantees the uniform boundedness of the velocity approximations in $L^p(0,T;W^{1,p}(\Omega)^d)$ (or, more precisely, on its broken counterpart) and of the stress approximations in $L^{p'}((0,T)\times \Omega)^{d\times d}$. This is, however, not enough as the usual notions of energy solutions for incompressible models require also that $\bu \in L^\infty(0,T;L^2(\Omega)^d)$; among other things, this condition is useful because (see, e.g., \cite{ST.2019} for more details):
\begin{itemize}
		\item Together with a Gagliardo--Nirenberg-type interpolation inequality, cf. \cite[Theorem I.2.1]{dibene}, it implies that $$\bu \in L^{\frac{p(d+2)}{d}}((0,T)\times\Omega)^d\,,$$ which, in turn, implies, e.g., that if $p\geq \frac{3d+2}{d+2}$ (and so, in particular, for the Newtonian problem in 2D), then the velocity is an admissible test function in the balance of momentum and, which guarantees an energy identity and, thus, uniqueness of solutions;\vspace{1mm}\enlargethispage{4mm}
	\item It is used when proving that $$\bu \in C_w^0([0,T];L^2(\Omega)^d)\,,$$ meaning that the initial condition is a priori meaningful in this weak sense, but in fact this allows one to prove that $$\lim_{t\to 0}\|\bu(t) - \bu_0\|_{L^2(\Omega)}= 0\,.$$
\end{itemize}

It is, therefore, highly desirable that the discretisation methods produce solutions which are also uniformly stable in $\LinftyT$. By testing the dG-in-time discretised system with the solution, it is straightforward (cf.\ Lemma \ref{lem:apriori}) to prove $L^2(\Omega)^d$-stability at the partition points $\{t_j\}_{j\in \{0,\dots,N_\tau\}}$.~However, this only yields the desired $\LinftyT$-bound in the lowest order case $\mathrm{dG}(0)$ (i.e.,~implicit~Euler), since the function is piece-wise constant in time. In general, when working with general dG in time discretisations, one can only guarantee stability in $L^{2p}(0,T;L^2(\Omega)^d)$; see \cite{W.2010} and \cite{AGF.2023}, for the spatially conforming and non-conforming cases, respectively. Thus, in general, one would obtain convergence to a weaker notion of solution that might not be unique even when $p=2=d$. Chrysafinos and Walkington (cf.\ \cite{CW.2010b}) proved, however, with the help of Ladyzhenskaya's inequality, that for spatially conforming discretisations, one can still obtain $\LinftyT$-stability for the Newtonian problem ($p=2$) in two spatial dimensions ($d=2$). One of the contributions of this work is the extension of this result to the non-Newtonian and non-conforming setting; in particular, we establish that if $p\geq \frac{3d+2}{d+2}$ (i.e., when the velocity is an admissible test function), DG discretisations are stable also in $\LinftyT$. 

{\color{black}
    In essence,  the argument is based on testing the equation with the discrete solution multiplied by an exponential function; when looking at, say, the time interval $(t_{j-1},t_j)$, $j\in\{1,\dots,N_\tau\}$, then testing, for every $j\in\{1,\dots,N_\tau\}$, with $(t\mapsto e^{-\lambda(t-t_{j-1})}\bu(t))\colon (0,T)\to W^{1,p}_0(\Omega)^d$,   $\lambda>0$,  for every $j\in\{1,\dots,N_\tau\}$, results in the following for the time-derivative term:
    \begin{align*}
        \int_{t_{j-1}}^{t_{j}} \int_\Omega \partial_t\bu \cdot \bu e^{-\lambda (t-t_{j-1})}\,\mathrm{d}x\,\mathrm{d}t&
        =
        \frac{\lambda}{2}\int_{t_{j-1}}^{t_j} e^{-\lambda(t-t_{j-1})} \|\bu(t)\|^2_{L^2(\Omega)}\,\mathrm{d}t
        \\&\quad + \frac{e^{-\lambda(t_j - t_{j-1})}}{2} \|\bu(t_{j})\|^2_{L^2(\Omega)}
        - \frac{1}{2} \|\bu(t_{j-1})\|^2_{L^2(\Omega)}\,.
    \end{align*}
    Since the $L^2(\Omega)$-norm of the velocity is uniformly bounded at the partition points $\{t_j\}_{j\in \{0,\dots,N_\tau\}}$ (and since the remaining terms in the discrete formulation can be controlled), setting $\lambda>0$ as the inverse time step and applying an inverse estimate in time to the first term in the right-hand-side, yields the desired $L^\infty(0,T;L^2(\Omega)^d)$-bound. The problem is that $(t\mapsto e^{-\lambda(t-t_{j-1})}\bu(t))\colon (0,T)\to W^{1,p}_0(\Omega)^d$,   $\lambda>0$, $j\in\{1,\dots,N_\tau\}$, are not admissible test functions in the discrete formulation, because it is not a polynomial in time. To circumvent this issue, an discrete approximation of the desired test function --the so-called exponential time interpolant-- was introduced in \cite{CW.2010b}; see also the works \cite{Chry.2010,CK.2012,Chry.2013,KHR.2023}, where this tool has been applied in other contexts. In this work, we show that this exponential time interpolant has also the required properties to carry out the argument in our setting, namely non-Hilbertian ($p\neq 2$) and a time discretisation method based on quadrature.

    Another contribution of this work is a proof of (weak) convergence of the discrete solutions to the weak solution of the system based on a compactness argument. We should note that the time discretisation method we consider is based on writing the time integrals in terms of a discrete measure $\GRdt$ based on quadrature around Gauss--Radau nodes (this is what results in the equivalence~to~the~RadauIIA~method). This leads to the difficulty that (as opposed usual approaches) the uniform bounds at hand contain norms in which the time integral is not exact --thus, preventing the direct use of the Banach--Alaoglu theorem. To the best of our knowledge, this is the first such result for time discretisations of non-linear problems.
}

Another important step in the proof is the application of a Gagliardo--Nirenberg inequality on DG spaces, which is needed since the numerical solutions are discontinuous across elements, which we also derive and is to the best of our knowledge also new; \textcolor{black}{see also \cite{KHR.2023}, where the inequality was recently derived for $p=2$ and more restrictive assumptions on the mesh}. 

\textit{This article is organized as follows:} In Section \ref{sec:preliminaries}, we introduce the employed notation, the basic assumptions on the mesh regularity, and the relevant spaces and operators from~DG~theory.~In~Section~\ref{sec:gagliardo}, we establish a discrete Gagliardo--Nirenberg-type inequality on DG spaces. In Section, \ref{sec:parabolic_interpolation}, using the discrete Gagliardo--Nirenberg-type inequality from Section \ref{sec:gagliardo}, we derive several parabolic discrete interpolation inequalities. These discrete parabolic interpolation inequalities are employed in Section~\ref{sec:stablity} to prove the $L^\infty(0,T,L^2(\Omega)^d)$-stability of discontinuous Galerkin schemes for incompressible flows. In Section \ref{sec:convergence}, we establish (weak) convergence of the discrete solutions to the weak solution of the system.\enlargethispage{6mm}

\section{Preliminaries}\label{sec:preliminaries}

		\hspace{5mm}Throughout the entire article, if not otherwise specified, we always denote by ${\Omega\subseteq \mathbb{R}^d}$,~${d\in\mathbb{N}}$, a bounded polyhedral Lipschitz domain with outward-pointing unit vector field $\bn\colon \partial\Omega\to \mathbb{S}^{d-1}$.
        Then, the time interval will be denoted by $I\coloneqq (0,T)$, $0<T<\infty$, and  the parabolic~cylinder~by~$Q\coloneqq  I\times \Omega$.\linebreak For $p\in [1,\infty]$ and $k\in\Na$, we will employ standard notation for Lebesgue $L^p(\Omega)$, Sobolev $W^{k,p}(\Omega)$, and Bochner--Sobolev $L^p(I;W^{k,p}(\Omega))$ spaces throughout. For $p\in [1,\infty)$ and $k\in \Na$, we~denote~by~$W_0^{k,p}(\Omega)$, the closure of the space of smooth functions on $\Omega$ with compact support, with~respect~to~the~$W^{k,p}(\Omega)$- norm. The subspace of $L^p(\Omega)$ functions with zero mean~will~be~denoted~by~$L_0^p(\Omega)$.

        \subsection{Mesh regularity}

				        \hspace{5mm}In this subsection, we propose a set of assumptions on the family of partitions $\{\Th\}_{h\in (0,1]}$,  which are required  to apply the theory developed in this paper. These assumptions correspond to the choice~in~\cite{BO.2009}.

        Let $\{\Th\}_{h\in (0,1]}$ be a family of partitions of the closure $\overline{\Omega}$ into convex polyhedral elements, which are affine images of a set of reference polyhedra. More precisely, we assume that there exists a finite number of convex reference polyedra $\widehat{K}_1,\dots,\widehat{K}_N$, such that $\vert\widehat{K}_N\vert = 1 $ for $i = 1,\dots, N$, and that for each $K \in \Th$, there exists a reference element $\widehat{K}_i$ for some $i\in \{1,\dots, N\}$ and an invertible affine map $F_K\colon \widehat{K}_i\to K$ such that $K = F_K (\widehat{K}_i)$. The symbol $h>0$ denotes the maximal mesh size, i.e., if we define $h_K\coloneqq \text{diam}(K)$ for every $K\in \Th$, then we have that $h=\max_{T\in \Th}{h_K}$.  Without loss of generality, we assume that $h\in  (0, 1]$. 
        
        \hspace*{-5mm}We will provide further assumptions on the mesh regularity in the course of this section.

				        We define the sets of $(d-1)$-dimensional faces $\Gamma_h$, interior faces $\Gamma_h^{i}$, and boundary faces $\Gamma_h^{\partial}$ of the partition $\Th$ by
        \begin{align*}
            \Gamma_h&\coloneqq \Gamma_h^{i}\cup \Gamma_h^{\partial}\,,\\[-0.5mm]
             \Gamma_h^{i}&\coloneqq  \{K\cap K'\mid K,K'\in \Th\,,\text{dim}_{\mathscr{H}}(K\cap K')=d-1\}\,,\\[-0.5mm]
             \Gamma_h^{\partial}&\coloneqq\{K\cap \partial\Omega\mid K\in \Th\,,\text{dim}_{\mathscr{H}}(K\cap \partial\Omega)=d-1\}\,,
        \end{align*}
        where for every  $S\subseteq \mathbb{R}^d$, we denote by $\text{dim}_{\mathscr{H}}(S)\coloneqq\inf\{d'\geq 0\mid \mathscr{H}^{d'}(S)=0\}$, the~Hausdorff~dimension. The (local) mesh-size function $h_{\mathcal{T}}\colon \Omega\to \mathbb{R}$ for every element $K\in \Th$ is defined by $h_{\mathcal{T}}|_K\coloneqq h_K $.
        The (local) face-size function $h_{\Gamma}\colon \Gamma_h\to \mathbb{R}$ for every facet $F\in \Gamma_h$ is defined by $h_{\Gamma}|_F\coloneqq h_F \coloneqq \text{diam}(F)$.\enlargethispage{1mm}

            \begin{assumption}[Mesh quality; cf. \cite{BO.2009}]\label{assum:mesh}
							We assume that $\{\Th\}_{h\in (0,1]}$ satisfies the following conditions:
                \begin{itemize}[{(iii)}]
                    \item[(i)] \textup{Shape Regularity.} There exist constants $c_1,c_2>0$ such that for every $K\in \mathcal{T}_h$ and~$h\in (0,1]$,~it~holds that 
                    \begin{align*}c_1\, h_K^d\leq \vert K\vert\leq  c_2\, h_K^d\,;
                    \end{align*}

                    \item[(ii)] \textup{Contact Regularity.} There exists a constant $c_3>0$ such that for every $F\in \Gamma_h$ with $F\subseteq \overline{K}$ for some $K\in \mathcal{T}_h$ and $h\in (0,1]$, it holds that
                    \begin{align*}c_3\, h_K^{d-1}\leq \mathscr{H}^{d-1}(F)\,;
                    \end{align*}

                    \item[(iii)] \textup{Submesh condition.} There exists a shape-regular, conforming, matching simplicial submesh $\widetilde{\Th}$  (without hanging nodes, edges, etc.) such that the following assumptions are satisfied:
                \begin{itemize}[{3.}]
									                    \item[1.] For each $\widetilde{K}\in \widetilde{\Th}$, there exists $K\in \Th$ such that $\widetilde{K}\subseteq K$;\vspace*{1mm}
                    \item[2.] The family $\{\widetilde{\Th}\}_{h\in (0,1]}$ satisfies (i) and (ii);\vspace*{1mm}\vspace{-0.5mm}
                    \item[3.] There exists a constant $\tilde{c}>0$ such that for any $\widetilde{K}\in \smash{\widetilde{\Th}}$, $K\in \Th$ with $\widetilde{K}\subseteq K$,~it~holds~that $${h_K \leq \tilde{c}\, h_{\widetilde{K}}}\,.$$
                \end{itemize}
                \end{itemize}
            \end{assumption}


\subsubsection{Broken function spaces and projectors}

\hspace{5mm}For every $k \in \Na_0$~and~${K\in \mathcal{T}_h}$,
we denote by $\mathbb{P}_k(K)$, the space of
polynomials of degree at most $k$ on $K$.  Then, for given $k\in \Na_0$, we define the space of \textit{broken polynomials of global degree at most $k$}
\begin{align*}
        \mathbb{P}_k(\mathcal T_h)&\coloneqq\big\{v_h\in L^\infty(\Omega)\mid v_h|_K\in \mathbb{P}_k(K)\text{ for all }K\in \mathcal{T}_h\big\}\,.
\end{align*}
In addition, for
given~$p\in (1,\infty)$,
we define the \textit{broken Sobolev space}
\begin{align*}
        W^{1,p}(\mathcal T_h)&\coloneqq\big\{w_h\in L^p(\Omega)\mid w_h|_K\in W^{1,p}(K)\text{ for all }K\in \mathcal{T}_h\big\}\,.
\end{align*}

For  each $w_h\!\in\! \WDG$, we denote by $\nabla_h w_h\!\in\! L^p(\Omega)^d$, 
the \textit{local gradient}, for~every~${K\!\in\!\mathcal{T}_h}$,~defined by
$(\nabla_h w_h)|_K\!\coloneqq\!\nabla(w_h|_K)$~for~all~${K\!\in\!\mathcal{T}_h}$.
For each $K\!\in\! \mathcal{T}_h$,~${w_h\!\in\! \WDG}$~admits~a~trace~${\textrm{tr}^K(w_h)\!\in\! L^p(\partial K)}$. For each face
$F\in \Gamma_h$ of a given element $K\in \mathcal{T}_h$, we define~this~interior trace by
$\smash{\textup{tr}^K_F(w_h)\in L^p(F)}$. Then, given some multiplication operator~${\odot\colon \mathbb{R}^m\times \mathbb{R}^d\to \mathbb{R}^l}$,~${m,l\in \Na}$, for
 every $w_h\in \WDG$ and interior faces $F\in \Gamma_h^{i}$ shared by
adjacent elements $K^-_F, K^+_F\in \mathcal{T}_h$,
we denote by
\begin{align*}
  \{w_h\}_F&\coloneqq\tfrac{1}{2}\big(\textup{tr}_F^{K^+}(w_h)+
  \textup{tr}_F^{K^-}(w_h)\big)\in
  L^p(F)\,,\\
  \llbracket w_h\odot \bn\rrbracket_F
  &\coloneqq\textup{tr}_F^{K^+}(w_h)\odot\bn^+_F+
    \textup{tr}_F^{K^-}(w_h)\odot\bn_F^- 
    \in L^p(F)\,,
\end{align*}
the \textit{average} and \textit{jump}, respectively, of $w_h$ on $F$.
    Moreover,  for every $w_h\in \WDG$ and boundary faces $F\in \Gamma_h^{\partial}$, we define \textit{boundary averages} and 
    \textit{boundary jumps}, respectively, by
    \begin{align*}
      \{w_h\}_F&\coloneqq\textup{tr}^\Omega_F(w_h) \in L^p(F)\,, \\
      \llbracket w_h\odot\bn\rrbracket_F&\coloneqq
      \textup{tr}^\Omega_F(w_h)\odot\bn \in L^p(F)\,.
    \end{align*}
    If there is no
    danger~of~confusion, we will omit the index $F\in \Gamma_h$. In particular,  if we interpret jumps and averages as global functions defined on the whole of $\Gamma_h$.
    Apart from that, for every $w_h\in \WDG$, we  introduce the \textit{DG norm} via
    \begin{align*}
    \|w_h\|_{h,p}\coloneqq\Big(\|\nabla_hw_h\|_{L^p(\Omega)}^p+\big\|h^{-1/p'}_\Gamma\jump{w_h\bn}\big\|_{L^p(\Gamma_h)}^p\Big)^{1/p}\,,
    \end{align*}
which turns $\WDG$ into a Banach space. 
With this norm, cf.~\cite[Lm. A.9]{DKRI14},~for~every~${w_h\in \WDG}$,  there holds the \textit{discrete Poincar\'e inequality}
\begin{align}\label{eq:poincare}
\|w_h\|_{L^p(\Omega)} \lesssim \|w_h\|_{h,p}\,.
\end{align}
		Whenever we write $A \lesssim B$, it is meant that $A \leq c\, B$ with a constant $c>0$ that might depend on the domain, polynomial degree and/or shape regularity, but is independent of the discretisation parameters (i.e., the mesh size $h>0$ or the time step size $\tau>0$).\enlargethispage{2mm}

\section{Discrete Gagliardo--Nirenberg-type inequality}\label{sec:gagliardo}
    \qquad In this section, we derive a discrete Gagliardo--Nirenberg-type inequality;
    {\color{black}
    see also \cite{KHR.2023}, where the same inequality was derived for $p=2$ and more restrictive assumptions on the mesh.
    }
     Key ingredient is 
    the  quasi-interpolation operator  $Q_h\colon  \P_k(\Th)\to \P_1(\smash{\widetilde{\Th}})\cap W^{1,\infty}(\Omega)$, where $\smash{\widetilde{\Th}}$ denotes the simplicial submesh in Assumption \ref{assum:mesh} (c), introduced~in~\cite{BO.2009}, and its approximation and stability properties~on~DG~spaces:

    \begin{lemma}\label{lem:scott_zhang_stable}
			       Let $p\in [1,\infty)$ and $k\in \mathbb{N}_0$. Then, 
        for every $v_h\in \P_k(\Th)$, it holds that 
\begin{align}\label{eq:scott_zhang_stable}
            \|\nabla Q_hv_h\|_{L^p(\Omega)}\lesssim
            \|v_h\|_{h,p}\,.
        \end{align}
    \end{lemma}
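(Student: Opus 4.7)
The plan is to reduce the claim to an inverse estimate on the submesh $\widetilde{\mathcal{T}}_h$ combined with the known approximation property of $Q_h$ proved in \cite{BO.2009}. First I would write, by the triangle inequality,
\begin{align*}
\|\nabla Q_h v_h\|_{L^p(\Omega)} \leq \|\nabla_h v_h\|_{L^p(\Omega)} + \|\nabla_h(Q_h v_h - v_h)\|_{L^p(\Omega)}\,,
\end{align*}
where the first term is already bounded by $\|v_h\|_{h,p}$ by the very definition of the DG norm. Therefore the whole task reduces to estimating the second term.

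The key observation for the second term is that on each simplex $\widetilde{K}\in \widetilde{\mathcal{T}}_h$ with $\widetilde{K}\subseteq K$ for some $K\in\mathcal{T}_h$, both $Q_h v_h|_{\widetilde{K}}\in\mathbb{P}_1(\widetilde{K})$ and $v_h|_{\widetilde{K}}\in \mathbb{P}_k(\widetilde{K})$ (the latter because $v_h|_K$ is polynomial of degree $k$ and $\widetilde{K}\subseteq K$) are polynomials of uniformly bounded degree on $\widetilde{K}$. A standard inverse estimate therefore gives
\begin{align*}
\|\nabla(Q_h v_h - v_h)\|_{L^p(\widetilde{K})} \lesssim h_{\widetilde{K}}^{-1}\|Q_h v_h - v_h\|_{L^p(\widetilde{K})}\,,
\end{align*}
and Assumption \ref{assum:mesh}(iii).3 yields $h_{\widetilde{K}}^{-1}\lesssim h_K^{-1}$. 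Summing the $p$-th powers over $\widetilde{K}\subseteq K$ (so the right-hand-side becomes $h_K^{-p}\|Q_h v_h - v_h\|_{L^p(K)}^p$) and then over $K\in\mathcal{T}_h$ produces
\begin{align*}
\|\nabla_h(Q_h v_h - v_h)\|_{L^p(\Omega)}^p \lesssim \sum_{K\in\mathcal{T}_h} h_K^{-p}\|Q_h v_h - v_h\|_{L^p(K)}^p\,.
\end{align*}

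At this point I would invoke the local approximation property of $Q_h$ from \cite{BO.2009}, which on each $K\in\mathcal{T}_h$ provides a bound of the form
\begin{align*}
h_K^{-p}\|Q_h v_h - v_h\|_{L^p(K)}^p \lesssim \|\nabla_h v_h\|_{L^p(\omega_K)}^p + \sum_{F\in\Gamma_h,\,F\cap\overline{K}\neq\emptyset} h_F^{1-p}\|\jump{v_h\bn}\|_{L^p(F)}^p\,,
\end{align*}
where $\omega_K$ denotes a bounded neighbourhood patch of $K$. Shape and contact regularity of $\mathcal{T}_h$ (Assumption \ref{assum:mesh}(i)--(ii)) guarantee that the covering $\{\omega_K\}_{K\in\mathcal{T}_h}$ and the incidence of faces to elements have uniformly bounded overlap, so summation over $K\in\mathcal{T}_h$ yields exactly $\|\nabla_h v_h\|_{L^p(\Omega)}^p + \|h_\Gamma^{-1/p'}\jump{v_h\bn}\|_{L^p(\Gamma_h)}^p = \|v_h\|_{h,p}^p$, which closes the estimate.

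The main obstacle is to quote (or locally re-prove) the approximation property of $Q_h$ in the correct $L^p$ form with the precise $h_F^{1-p}$-weighted jump contributions; if \cite{BO.2009} provides the bound with a slightly different scaling or patch definition, one has to verify that the scaling of the face term is compatible with the weight $h_\Gamma^{-1/p'}$ used in the DG norm (noting $1-p = -p/p'$). Everything else is a careful bookkeeping of the inverse estimate on the submesh and the bounded-overlap argument.
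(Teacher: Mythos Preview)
Your argument is correct. The paper itself does not give a proof here at all: it simply cites \cite[Thm.~3.1, (3.11)]{BO.2009}, where the stability \eqref{eq:scott_zhang_stable} is established directly. Your route is genuinely different in that you \emph{derive} the stability from the approximation property (which is Lemma~\ref{lem:scott_zhang_approx} in the paper, also cited from \cite{BO.2009}) via an inverse estimate on the conforming submesh $\widetilde{\mathcal{T}}_h$. This is a standard and clean reduction: it trades one external citation for another plus a local inverse inequality, and has the advantage of making explicit why the submesh condition (Assumption~\ref{assum:mesh}(iii)) enters---it is needed so that $v_h - Q_h v_h$ is piecewise polynomial on $\widetilde{\mathcal{T}}_h$ and the inverse estimate applies. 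One small point: in your local approximation bound, the face sum should run over $F\in\Gamma_h\cap\omega_K$ (faces inside the patch) rather than merely $F\cap\overline{K}\neq\emptyset$, to match what \cite{BO.2009} actually proves; this does not affect the bounded-overlap summation.
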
\vspace{-3mm}

    \begin{proof}
        See \cite[Thm. 3.1, (3.11)]{BO.2009}.
    \end{proof}

   \begin{lemma}\label{lem:scott_zhang_approx}
		 Let $p,s\in [1,\infty)$ and  $k\in\mathbb{N}_0$.
        Then,
        for every $v_h\in \P_k(\Th)$ and $K\in \Th$, it holds that
        \begin{align} \label{eq:scott_zhang_approx}
						            \|v_h-Q_hv_h\|_{L^s(K)}\lesssim 
            h_K^{1+d(\frac{1}{s}-\frac{1}{p})}\|v_h\|_{h,p,\omega_K}\,,
        \end{align}
        where $\omega_K\coloneqq \bigcup\{K'\in\Th\mid K'\cap K\neq\emptyset\} $ and, for every $w_h \in W^{1,p}(\Th)$, we define
        \begin{align}
        \smash{\|w_h\|_{h,p,\omega_K}\coloneqq (\|\nabla_hw_h\|_{L^p(\omega_K)}^p+\|h^{-1/p'}_\Gamma\jump{w_h\bn}\|_{L^p(\Gamma_h\cap \omega_K)}^p)^{1/p}}.
\end{align}
        In particular, for every $v_h\in \P_k(\Th)$,
        it holds that
        $$
						            \|v_h-Q_hv_h\|_{L^p(\Omega)}\lesssim \|h_{\mathcal{T}}v_h\|_{h,p}\,.
        $$
    \end{lemma}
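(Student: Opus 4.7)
The strategy is to establish the local estimate in the special case $s=p$ first, then extend it to arbitrary $s\in[1,\infty)$ via an element-wise inverse inequality on the simplicial submesh $\widetilde{\Th}$, and finally deduce the global estimate by summing the local bound over $\Th$ using the bounded overlap of the patches $\{\omega_K\}_{K\in\Th}$.

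For the local estimate with $s=p$, the key observation is that $Q_h$ reproduces constants on each patch $\omega_K$ by construction in \cite{BO.2009}, so for any $c\in\R$ one may write $v_h-Q_hv_h=(v_h-c)-Q_h(v_h-c)$. Taking $c$ to be the integral mean of $v_h$ over $\omega_K$ and applying a broken Poincar\'e inequality on $\omega_K$ (in which both the gradient and the jump contributions are picked up, producing a factor of $h_K$) yields $\|v_h-c\|_{L^p(\omega_K)}\lesssim h_K\|v_h\|_{h,p,\omega_K}$. The corresponding bound on $\|Q_h(v_h-c)\|_{L^p(K)}$ follows from a local $L^p$-stability property of $Q_h$ --which is part of the construction in \cite{BO.2009} and is compatible with Lemma~\ref{lem:scott_zhang_stable}-- applied to $v_h-c$ on the patch; combining the two estimates gives $\|v_h-Q_hv_h\|_{L^p(K)}\lesssim h_K\|v_h\|_{h,p,\omega_K}$.

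To pass to arbitrary $s$, I use that on each simplex $\widetilde K\in\widetilde{\Th}$ with $\widetilde K\subseteq K$ the difference $v_h-Q_hv_h$ is a polynomial of bounded degree. A standard scaling/norm-equivalence argument on polynomial spaces then yields $\|v_h-Q_hv_h\|_{L^s(\widetilde K)}\lesssim h_{\widetilde K}^{d(1/s-1/p)}\|v_h-Q_hv_h\|_{L^p(\widetilde K)}$, understood as a genuine inverse inequality for $s\geq p$ and simply as H\"older's inequality for $s\leq p$. Employing Assumption~\ref{assum:mesh}(iii.3) to identify $h_{\widetilde K}\sim h_K$, summing over $\widetilde K\subseteq K$, and invoking the case $s=p$ already established produces the desired local estimate. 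For the global bound, I raise the local $s=p$ estimate to the $p$-th power, sum over $K\in\Th$, and use the bounded overlap of the patches together with shape regularity ($h_K\sim h_F$ for $F\subset\partial K$) to arrive at $\|v_h-Q_hv_h\|_{L^p(\Omega)}^p\lesssim \|h_{\mathcal{T}}\nabla_hv_h\|_{L^p(\Omega)}^p+\|h_\Gamma^{1/p}\jump{v_h\bn}\|_{L^p(\Gamma_h)}^p$; since $h_{\mathcal{T}}$ is piecewise constant, the right-hand side is equivalent to $\|h_\mathcal{T} v_h\|_{h,p}^p$.

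The principal subtlety is the correct bookkeeping of the jump contributions in the seminorm $\|v_h\|_{h,p,\omega_K}$: they are absorbed by combining the broken Poincar\'e inequality on patches (which produces the correct power of $h_K$) with the DG-compatible stability of $Q_h$ provided by Lemma~\ref{lem:scott_zhang_stable}. After the $s=p$ case is secured, passing to general $s$ and to the global estimate reduces to scaling and summation arguments.
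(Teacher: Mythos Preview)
The paper does not actually prove this lemma; it simply cites \cite[Thm.~3.1, (3.7) \& (3.10)]{BO.2009}, where the operator $Q_h$ is constructed and its approximation properties are established. Your sketch therefore goes beyond what the paper offers and reconstructs the standard argument for Scott--Zhang-type operators: constant reproduction plus a local broken Poincar\'e--Wirtinger inequality on the patch gives the $s=p$ case, after which the passage to general $s$ is a matter of polynomial inverse estimates on the submesh $\widetilde{\Th}$ together with the comparability $h_{\widetilde K}\sim h_K$ from Assumption~\ref{assum:mesh}(iii.3) and the uniform bound on $\mathrm{card}\{\widetilde K\subseteq K\}$. This is a correct and natural route.

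Two small points worth tightening. First, the ``local $L^p$-stability of $Q_h$'' you invoke is not what Lemma~\ref{lem:scott_zhang_stable} says (that lemma controls $\nabla Q_h$, not $Q_h$ itself in $L^p$); the local $L^p$-bound $\|Q_h w\|_{L^p(K)}\lesssim\|w\|_{L^p(\omega_K)}$ is a separate, more elementary consequence of the Scott--Zhang construction in \cite{BO.2009}, so you should cite it directly rather than tie it to Lemma~\ref{lem:scott_zhang_stable}. Second, your final identification ``the right-hand side is equivalent to $\|h_{\mathcal T}v_h\|_{h,p}^p$'' is not literally correct for the jump term, because $\jump{h_{\mathcal T}v_h\,\bn}$ on an interior face $F$ involves the two possibly distinct values $h_{K^\pm}$ and is not simply $h_F\jump{v_h\bn}$. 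What you in fact derive, namely $\|h_{\mathcal T}\nabla_hv_h\|_{L^p(\Omega)}^p+\|h_\Gamma^{1/p}\jump{v_h\bn}\|_{L^p(\Gamma_h)}^p$, is the natural quantity; the paper's notation $\|h_{\mathcal T}v_h\|_{h,p}$ should be read as shorthand for precisely this, not as the DG norm of the piecewise polynomial $h_{\mathcal T}v_h$ taken literally.
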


    \begin{proof}
        See \cite[Thm. 3.1, (3.7) \& (3.10)]{BO.2009}. 
    \end{proof}

    \begin{corollary}\label{cor:scott_zhang_stable}
			       Let $p\in [1,\infty)$ and $k\in\mathbb{N}_0$. Then, 
        for every $v_h\in \P_k(\Th)$ and $K\in \Th$, it holds that
        $$
					            \| Q_hv_h\|_{L^p(\Omega)}+\| v_h-Q_hv_h\|_{L^p(\Omega)}\lesssim  
            \|v_h\|_{L^p(\Omega)}\,.
        $$
         In particular, for every $v_h\in \P_k(\Th)$,
        it holds that
         $$
					            \| Q_hv_h\|_{L^p(K)}+\| v_h-Q_hv_h\|_{L^p(K)}\lesssim  
            \|v_h\|_{L^p(\omega_K)}\,.
        $$
    \end{corollary}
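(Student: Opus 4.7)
The plan is to combine the local approximation estimate of Lemma \ref{lem:scott_zhang_approx} (with $s=p$), which provides
$$
\|v_h - Q_hv_h\|_{L^p(K)} \lesssim h_K\,\|v_h\|_{h,p,\omega_K}\,,
$$
with inverse estimates that convert the factor $h_K\|v_h\|_{h,p,\omega_K}$ into $\|v_h\|_{L^p(\omega_K)}$. Since $v_h$ is piece-wise polynomial, this amounts to showing that the broken gradient and the weighted jumps in the DG seminorm scale, on polynomials, like $h_K^{-1}$ times the $L^p$-mass.

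First, I would apply the classical elementwise inverse inequality
$\|\nabla v_h\|_{L^p(K')} \lesssim h_{K'}^{-1}\|v_h\|_{L^p(K')}$
for every $K'\subseteq \omega_K$, and sum over the (uniformly bounded in number) neighbours of $K$ to obtain
$\|\nabla_h v_h\|_{L^p(\omega_K)} \lesssim h_K^{-1}\|v_h\|_{L^p(\omega_K)}$,
using shape regularity (Assumption \ref{assum:mesh}(i)) to equate $h_{K'} \sim h_K$ for $K'\subseteq \omega_K$. Next, for each face $F\in \Gamma_h\cap \omega_K$ with adjacent elements $K^\pm_F$, I would apply the standard discrete trace inequality  $\|v_h\|_{L^p(F)} \lesssim h_{K^\pm_F}^{-1/p}\|v_h\|_{L^p(K^\pm_F)}$, multiply by $h_F^{-1/p'}$, and again invoke shape and contact regularity (Assumption \ref{assum:mesh}(i)--(ii)) to deduce $h_F \sim h_K$. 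Summing yields
$$
\bigl\|h_\Gamma^{-1/p'}\jump{v_h\bn}\bigr\|_{L^p(\Gamma_h\cap\omega_K)} \lesssim h_K^{-1}\,\|v_h\|_{L^p(\omega_K)}\,.
$$
Adding the two bounds gives $\|v_h\|_{h,p,\omega_K} \lesssim h_K^{-1}\|v_h\|_{L^p(\omega_K)}$, which, inserted into the Lemma \ref{lem:scott_zhang_approx} estimate, proves the local bound for $\|v_h - Q_h v_h\|_{L^p(K)}$. The local bound for $\|Q_hv_h\|_{L^p(K)}$ follows immediately by the triangle inequality and the fact that $\|v_h\|_{L^p(K)}\leq \|v_h\|_{L^p(\omega_K)}$.

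For the global statement, I would raise the local estimate to the $p$-th power, sum over $K\in\Th$, and use the finite overlap of the patches $\{\omega_K\}_{K\in\Th}$, which is again a consequence of shape regularity: each $K'\in\Th$ belongs to a uniformly bounded number of patches $\omega_K$. This gives
$\sum_K \|v_h\|_{L^p(\omega_K)}^p \lesssim \|v_h\|_{L^p(\Omega)}^p$ and hence the global bound.

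I do not expect any serious obstacle here; the main point is simply the bookkeeping that confirms that, restricted to broken polynomials, the DG seminorm $\|\cdot\|_{h,p,\omega_K}$ is controlled by $h_K^{-1}\|\cdot\|_{L^p(\omega_K)}$ via standard inverse and discrete trace inequalities, both of which are justified by the submesh and shape regularity assumptions in Assumption \ref{assum:mesh}.
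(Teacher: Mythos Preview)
Your proposal is correct and follows essentially the same approach as the paper: apply Lemma~\ref{lem:scott_zhang_approx} with $s=p$, then use the elementwise inverse inequality and the discrete trace inequality to convert $h_K\|v_h\|_{h,p,\omega_K}$ into $\|v_h\|_{L^p(\omega_K)}$, and finish with the triangle inequality. The paper's proof is terser and only writes out the local estimate, whereas you additionally spell out the summation and finite-overlap argument for the global bound; this is a harmless elaboration, not a different route.
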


    \begin{proof}\let\qed\relax
        Using the $L^p$-approximation property of $Q_h$ for $s=p$ (cf. Lemma \ref{lem:scott_zhang_approx}), the inverse inequality (cf. \cite[Ex. 12.3]{EG21}), and the discrete trace inequality (cf.~\cite[Lm. 12.8]{EG21}), we find that
        \begin{align*}
						            \| Q_hv_h\|_{L^p(K)}+\| v_h-Q_hv_h\|_{L^p(K)}&\lesssim \|  v_h\|_{L^p(K)} + \|  v_h -Q_hv_h\|_{L^p(K)}\\&
            \lesssim \|  v_h\|_{L^p(K)}+h_K\,\|  v_h\|_{h,p,\omega_K}
						\lesssim \|  v_h\|_{L^p(\omega_K)}\,.\tag*{\qedsymbol}
        \end{align*}
    \end{proof}\vspace*{-10mm}\enlargethispage{8mm}
    
    \begin{lemma}[Gagliardo--Nirenberg]\label{lem:gagliardo}
			        Let $p,q\in [1,\infty)$ and $k\in\mathbb{N}_0$. Then, 
        for every $v_h\in \P_k(\Th)$, it holds that 
        \begin{align*}
						            \|v_h\|_{L^s(\Omega)}\lesssim 
            \|v_h\|_{h,p}^\gamma\|v_h\|_{L^q(\Omega)}^{1-\gamma}\,,
        \end{align*}
         where $s\in [1,\infty)$ and $\gamma\in [0,1]$ satisfy
        \begin{align}
            \gamma=\frac{\frac{1}{q}-\frac{1}{s}}{\frac{1}{q}+\frac{1}{d}-\frac{1}{p}}\,.\label{eq:gamma}
        \end{align}
    \end{lemma}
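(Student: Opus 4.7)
The plan is a two-step reduction. First, I would establish the discrete Sobolev embedding $\|v_h\|_{L^{p^*}(\Omega)} \lesssim \|v_h\|_{h,p}$, where $p^*$ is the Sobolev conjugate of $p$ (so $1/p^* = 1/p - 1/d$ in the subcritical regime $p < d$). Second, I would recover the Gagliardo--Nirenberg inequality by H\"older-interpolating $L^s$ between $L^q$ and $L^{p^*}$ via the log-convexity of Lebesgue norms; a short algebraic manipulation shows that the H\"older interpolation weight coincides with the $\gamma$ from \eqref{eq:gamma} once one substitutes $1/p^* = 1/p - 1/d$.

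For the discrete Sobolev embedding, I would split $v_h = Q_h v_h + (v_h - Q_h v_h)$. The conforming part is controlled by the continuous Sobolev embedding $W^{1,p}(\Omega) \hookrightarrow L^{p^*}(\Omega)$ on the bounded Lipschitz domain $\Omega$, combined with Lemma \ref{lem:scott_zhang_stable}, Corollary \ref{cor:scott_zhang_stable}, and the discrete Poincar\'e inequality \eqref{eq:poincare}, yielding $\|Q_h v_h\|_{L^{p^*}(\Omega)} \lesssim \|v_h\|_{h,p}$. For the remainder, I would apply Lemma \ref{lem:scott_zhang_approx} with $s = p^*$, which exhibits the dimensional cancellation $1 + d/p^* - d/p = 0$, giving the element-wise bound
\[
    \|v_h - Q_h v_h\|_{L^{p^*}(K)} \lesssim \|v_h\|_{h,p,\omega_K}.
\]
Raising this to the $p^*$-th power, summing over $K \in \Th$, and invoking the elementary inequality $\sum_K a_K^{p^*} \leq (\sum_K a_K^p)^{p^*/p}$ (valid since $p^*/p \geq 1$) together with the bounded-overlap property $\sum_K \|v_h\|_{h,p,\omega_K}^p \lesssim \|v_h\|_{h,p}^p$, I would conclude $\|v_h - Q_h v_h\|_{L^{p^*}(\Omega)} \lesssim \|v_h\|_{h,p}$.

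The Gagliardo--Nirenberg inequality then follows in one line: for $\gamma \in [0,1]$ and $1/s = (1-\gamma)/q + \gamma/p^*$, log-convexity of Lebesgue norms gives
\[
    \|v_h\|_{L^s(\Omega)}
    \leq \|v_h\|_{L^q(\Omega)}^{1-\gamma}\|v_h\|_{L^{p^*}(\Omega)}^\gamma
    \lesssim \|v_h\|_{L^q(\Omega)}^{1-\gamma}\|v_h\|_{h,p}^\gamma,
\]
and substituting $1/p^* = 1/p - 1/d$ into $1/s = (1-\gamma)/q + \gamma/p^*$ recovers \eqref{eq:gamma} verbatim.

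The hard part will be the case distinction $p \lessgtr d$: the argument above is clean in the subcritical regime $p < d$, but for $p \geq d$ I would replace $p^*$ by a sufficiently large finite exponent $r \in [s, \infty)$ and rely on the embedding $W^{1,p}(\Omega) \hookrightarrow L^r(\Omega)$ (with $r$-dependent constants when $p = d$); the factor $h_K^{1 + d/r - d/p}$ coming from Lemma \ref{lem:scott_zhang_approx} is then strictly positive but uniformly bounded by $h^{1 + d/r - d/p} \leq 1$, so the main line of argument goes through with only minor adjustments.
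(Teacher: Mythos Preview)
Your argument is clean and correct in the subcritical regime $p<d$, and there it is a genuinely different route from the paper's. The paper also splits $v_h = Q_h v_h + (v_h - Q_h v_h)$, but it bounds both pieces in $L^s$ \emph{directly} with the product structure $\|\cdot\|_{h,p}^\gamma\|\cdot\|_{L^q}^{1-\gamma}$: classical Gagliardo--Nirenberg is applied to the conforming part, while for the remainder it writes $\|v_h-Q_hv_h\|_{L^s(K)} = \|v_h-Q_hv_h\|_{L^s(K)}^{\gamma}\|v_h-Q_hv_h\|_{L^s(K)}^{1-\gamma}$, estimates one factor by Lemma~\ref{lem:scott_zhang_approx} and the other by a local inverse inequality $L^q\to L^s$ together with Corollary~\ref{cor:scott_zhang_stable}, and observes that the combined $h_K$-exponent $(1+d(\tfrac1s-\tfrac1p))\gamma + d(\tfrac1s-\tfrac1q)(1-\gamma)$ vanishes precisely for the $\gamma$ of \eqref{eq:gamma}. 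Your approach instead first proves the endpoint discrete Sobolev embedding and then interpolates; this is more modular and arguably simpler when it applies.

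The gap is in the case $p\geq d$. H\"older interpolation of $L^s$ between $L^q$ and $L^r$ yields the exponent
\[
\gamma' \;=\; \frac{1/q-1/s}{1/q-1/r}\,,
\]
and since $-1/r < 0 \leq 1/d - 1/p$ for every finite $r$, one has $\gamma' > \gamma$ strictly whenever $p>d$; for $p=d$ one still has $\gamma'>\gamma$ for every finite $r$, with $\gamma'\to\gamma$ only as $r\to\infty$, where the constant in $W^{1,d}(\Omega)\hookrightarrow L^r(\Omega)$ blows up. Passing from the $\gamma'$-estimate down to the claimed $\gamma$-estimate would require $\|v_h\|_{h,p}^{\gamma'-\gamma}\lesssim \|v_h\|_{L^q}^{\gamma'-\gamma}$, i.e.\ a reverse Poincar\'e inequality, which is false in general. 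So the reduction to a single discrete Sobolev endpoint throws away exactly the information needed to hit the sharp $\gamma$ when $p\geq d$. The paper's argument avoids this because classical Gagliardo--Nirenberg on $Q_h v_h\in W^{1,p}(\Omega)$ already delivers the correct $\gamma$ for all $p$, and the local exponent cancellation for the remainder is insensitive to whether $p<d$ or $p\geq d$.
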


    Analogously to \cite[Thm. I.2.1]{dibene}, for each $d\ge 2$, the admissible range for $p,q,s\in [1,\infty)$ and $\gamma\in [0,1]$ satisfying \eqref{eq:gamma}, setting $p_*\coloneqq \smash{\frac{dp}{d-p}}$ if $p<d$, is given by:\vspace{-3mm}
		    \begin{subequations}\label{eq:admissibility}
    \begin{alignat}{4}
        &\text{if }p\in [1,d):\quad &&\gamma\in [0,1]\qquad &&\text{and}\qquad &&s\in \begin{cases}
            [q,p_*]&\text{if }q\in [1,p_*]\\
            [p_*,q]&\text{if }q\in [p_*,\infty)
        \end{cases}\,,\label{eq:admissibility.1}\\
				&\text{if }p\in [d,\infty):\quad &&s\in [q,\infty)\qquad &&\text{and}\qquad &&\gamma\in  \big[0,\tfrac{dp}{dp+q(p-d)}\big)\,.\label{eq:admissibility.2}
    \end{alignat}
		\end{subequations}

    \begin{proof}[Proof (of Lemma \ref{lem:gagliardo}).]
    
        To begin with, we observe that
        \begin{align}\label{eq:gagliardo.1}
						 \|v_h\|_{L^s(\Omega)}\leq \|Q_hv_h\|_{L^s(\Omega)}+\|v_h-Q_hv_h\|_{L^s(\Omega)}\eqqcolon I_h^1+I_h^2\,.
        \end{align}
        
        As a result, it suffices to estimate $I_h^1$ and $I_h^2$ separately:

				\textit{ad $I_h^1$.} Using the classical Galgiardo--Nirenberg inequality \cite{Nir59}, the discrete Poincar\'e~inequality~\eqref{eq:poincare}, the DG-stability of $Q_h$ (cf.~Lemma~\ref{lem:scott_zhang_stable}),   and the $L^q$-stability~property~of~$Q_h$~(cf.~Corollary~\ref{cor:scott_zhang_stable}),~we~deduce~that
        \begin{align}\label{eq:gagliardo.2}
            \begin{aligned}
							I_h^1&\lesssim \,(\| Q_hv_h\|_{L^p(\Omega)}+\|\nabla Q_hv_h\|_{L^p(\Omega)})^{\gamma}\|Q_hv_h\|_{L^q(\Omega)}^{1-\gamma}
                \\
								                &\lesssim \|Q_h v_h\|_{h,p}^{\gamma}\|Q_hv_h\|_{L^q(\Omega)}^{1-\gamma}
                \\&\lesssim \|v_h\|_{h,p}^{\gamma}\|v_h\|_{L^q(\Omega)}^{1-\gamma}\,.
            \end{aligned}
        \end{align}

        \textit{ad $I_h^2$.} Using Lemma \ref{lem:scott_zhang_approx}, \cite[Ex. 12.4]{EG21} for all $K\in \Th$ and $\widetilde{K}\in \widetilde{\Th}$, that $h_K\leq \tilde{c}\,h_{\widetilde{K}}\leq \tilde{c}\,h_K$~for~all~$K\in \Th$ and $\widetilde{K}\in \widetilde{\Th}$ with $\widetilde{K}\subseteq K$ (cf. Assumption \ref{assum:mesh} (c) 3.), that $\textup{card}(\{\widetilde{K}\in \smash{\widetilde{\Th}}\mid \widetilde{K}\subseteq K\})\lesssim 1$ for all $K\in \Th$ (cf. \cite[Lm. 1.40]{EP12}), Corollary \ref{cor:scott_zhang_stable}, and that $\sum_{i\in \mathbb{L}}{\vert a_i\vert^s}\leq (\sum_{i\in \mathbb{L}}{\vert a_i\vert})^s$ 
        for any finite subset $\mathbb{L}\subseteq \mathbb{N}$ and finite sequence $(a_i)_{i\in \mathbb{L}}\subseteq \mathbb{R}$,
         we find that
        \begin{align}\label{eq:gagliardo.3}
             \begin{aligned}
						            (I_h^2)^s&\leq \sum_{K\in \Th}{\Big(\|v_h-Q_hv_h\|_{L^s(K)}^{\gamma}\|v_h-Q_hv_h\|_{L^s(K)}^{1-\gamma}}\Big)^s
                  \\&\lesssim \sum_{K\in \Th}{\Bigg(\Big(h_K^{1+d(\frac{1}{s}-\frac{1}{p})}\|v_h\|_{h,p,\omega_K}\Big)^{\gamma}\bigg(\sum_{\widetilde{K}\in \widetilde{\Th};\widetilde{K}\subseteq K}{\|v_h-Q_hv_h\|_{L^s(\widetilde{K})}^s}\bigg)^{\smash{\frac{1-\gamma}{s}}}\Bigg)^s}
                   \\&\lesssim \sum_{K\in \Th}{\Bigg(\Big(h_K^{1+d(\frac{1}{s}-\frac{1}{p})}\|v_h\|_{h,p,\omega_K}\Big)^{\gamma}\bigg(\sum_{\widetilde{K}\in \widetilde{\Th};\widetilde{K}\subseteq K}{h_{\widetilde{K}}^{d(\frac{1}{s}-\frac{1}{q})s}\|v_h-Q_hv_h\|_{L^q(\widetilde{K})}^s\bigg)^{\smash{\frac{1-\gamma}{s}}}}\Bigg)^s}
            \\&\lesssim \sum_{K\in \Th}{\bigg(\Big(h_K^{1+d(\frac{1}{s}-\frac{1}{p})}\|v_h\|_{h,p,\omega_K}\Big)^{\gamma}\Big(h_K^{d(\frac{1}{s}-\frac{1}{q})}\|v_h-Q_hv_h\|_{L^q(K)}\Big)^{1-\gamma}\bigg)^s}
            \\&\lesssim \sum_{K\in \Th}{\bigg(h_K^{(1+d(\frac{1}{s}-\frac{1}{p}))\gamma+d(\frac{1}{s}-\frac{1}{q})(1-\gamma)}}\|v_h\|_{h,p,\omega_K}^{\gamma}\|v_h\|_{L^q(\omega_K)}^{1-\gamma}\bigg)^s
            \\&\lesssim \bigg(\sum_{K\in \Th}{h_K^{(1+d(\frac{1}{s}-\frac{1}{p}))\gamma+d(\frac{1}{s}-\frac{1}{q})(1-\gamma)}}\|v_h\|_{h,p,\omega_K}^{\gamma}\|v_h\|_{L^q(\omega_K)}^{1-\gamma}\bigg)^s\,.
            \end{aligned}
        \end{align}
        By the definition of $\gamma\in [0,1]$, cf. \eqref{eq:gamma}, it holds that
        \begin{align}\label{eq:gagliardo.4}
            \begin{aligned}
           (1+d(\tfrac{1}{s}-\tfrac{1}{p}))\gamma+d(\tfrac{1}{s}-\tfrac{1}{q})(1-\gamma)=0 \,.
           \end{aligned}
        \end{align}
        Using~\eqref{eq:gagliardo.4}~in~\eqref{eq:gagliardo.3}, in particular, using that each $K\in \Th$ appears only in finitely many $\omega_{K'}$, $K'\in \Th$, we arrive at\enlargethispage{5mm}
        \begin{align}\label{eq:gagliardo.5}
						I_h^2\lesssim \|v_h\|_{h,p}^{\gamma}\|v_h\|_{L^q(\Omega)}^{1-\gamma}\,.
        \end{align}

        Eventually, combining \eqref{eq:gagliardo.2} and \eqref{eq:gagliardo.5} in \eqref{eq:gagliardo.1}, we conclude the assertion.\enlargethispage{2mm}
    \end{proof} 
    {\color{black}
        We should mention that the precise form of the interpolation operator $Q_h$ is not so important, but rather that it satisfies the stability and approximation properties from Lemmas \ref{lem:scott_zhang_stable} and \ref{lem:scott_zhang_approx}. For instance, we could have instead employed the operator from \cite{DR.2007}, which is even defined for Orlicz--Sobolev spaces, but we decided to use the operator from \cite{BO.2009} due to its weaker assumptions on the mesh.
    }

    \section{Parabolic interpolation inequalities for discontinuous elements}\label{sec:parabolic_interpolation}

   \qquad In this section, we derive parabolic interpolation inequalities which will be employed in Section~\ref{sec:stablity} to establish the $L^\infty(I;L^2(\Omega)^d)$-stability of discontinuous Galerkin~schemes.\vspace{-1.5mm}

    \begin{lemma}[Parabolic interpolation inequality]\label{lem:parabolic_interpolation}
			        Let $p,q,s\in [1,\infty)$ be such that $q\leq s$, let $\gamma\in [0,1]$ be such that \eqref{eq:gamma} is satisfied and let $k\in \mathbb{N}_0$. Then, 
        for every $v_h\in L^\infty(I;\P_k(\Th))$, it holds that
        \begin{align*}
				 \textcolor{black}{	\smash{\|v_h\|_{L^r(I;L^s(\Omega))}\lesssim \big\|\|v_h(\cdot)\|_{h,p}\big\|_{L^p(I)}^\gamma  \|v_h\|_{L^\infty(I;L^q(\Omega))}^{1-\gamma}\,,}}
        \end{align*}
        where $r=\smash{\frac{s(p(q+d)-dq)}{(s-q)d}}\in (1,\infty]$.\vspace{-2.5mm} 
    \end{lemma}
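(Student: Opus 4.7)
The plan is to invoke the discrete Gagliardo--Nirenberg inequality (Lemma \ref{lem:gagliardo}) pointwise in time and then integrate in time, pulling out the $L^\infty(I;L^q(\Omega))$-factor.

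First, I would apply Lemma \ref{lem:gagliardo} to $v_h(t)\in \P_k(\Th)$ for almost every $t\in I$, which gives
\begin{align*}
  \|v_h(t)\|_{L^s(\Omega)}\lesssim \|v_h(t)\|_{h,p}^{\gamma}\,\|v_h(t)\|_{L^q(\Omega)}^{1-\gamma}\,.
\end{align*}
The constant here is independent of $t$ because it only depends on $p,q,s,d,\gamma$ and the mesh regularity constants from Assumption \ref{assum:mesh}. Note that the admissibility condition $q\leq s$ (together with \eqref{eq:gamma} determining $\gamma\in [0,1]$) is assumed in the statement, so the application of Lemma \ref{lem:gagliardo} is legitimate.

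Next, I would raise both sides to the $r$-th power, integrate over $I$, and bound the $L^q(\Omega)$-factor pointwise by the $L^\infty(I;L^q(\Omega))$-norm:
\begin{align*}
  \|v_h\|_{L^r(I;L^s(\Omega))}^r
  \lesssim \|v_h\|_{L^\infty(I;L^q(\Omega))}^{(1-\gamma)r}\int_I \|v_h(t)\|_{h,p}^{\gamma r}\,\d t\,.
\end{align*}
The key choice is now $\gamma r = p$, i.e., $r = p/\gamma$. A short computation using \eqref{eq:gamma}, namely
\begin{align*}
  \gamma = \frac{\tfrac{1}{q}-\tfrac{1}{s}}{\tfrac{1}{q}+\tfrac{1}{d}-\tfrac{1}{p}} = \frac{(s-q)\,pd}{s\,(p(q+d)-dq)}\,,
\end{align*}
verifies that $p/\gamma = \tfrac{s(p(q+d)-dq)}{(s-q)d}$, matching the exponent $r$ in the statement. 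With this choice, the time integral becomes $\big\|\|v_h(\cdot)\|_{h,p}\big\|_{L^p(I)}^p$, and taking the $r$-th root yields the desired inequality, since $p/r=\gamma$.

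There is no genuine obstacle in this proof; the main point is the algebraic identification of the exponent $r$ so that the $L^p(I)$-in-time norm of the spatial DG-norm appears exactly on the right-hand side. The edge cases $\gamma=0$ (forcing $s=q$ and $r=\infty$, giving the trivial estimate $\|v_h\|_{L^\infty(I;L^q(\Omega))}\leq \|v_h\|_{L^\infty(I;L^q(\Omega))}$) and $\gamma=1$ are handled by the same computation, the only delicate point being that $r\in (1,\infty]$ is ensured by the admissibility range \eqref{eq:admissibility} together with $q\leq s$.
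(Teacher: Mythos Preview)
Your proposal is correct and follows essentially the same approach as the paper: apply the discrete Gagliardo--Nirenberg inequality pointwise in time, pull out the $L^\infty(I;L^q(\Omega))$-factor, and set $r=p/\gamma$ so that the remaining time integral is exactly $\big\|\|v_h(\cdot)\|_{h,p}\big\|_{L^p(I)}^p$. The paper's only organisational difference is that it explicitly splits into the two cases $s>q$ (where $\gamma>0$ and $r=p/\gamma\in(1,\infty)$) and $s=q$ (where $\gamma=0$ and $r=\infty$), which you cover in your final remark on edge cases.
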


    \begin{proof}
			By assumption on $p,q,s\in [1,\infty)$ and $\gamma\in [0,1]$, cf. \eqref{eq:gamma}, we can apply the discrete Gagliardo--Nirenberg-type inequality (cf. Lemma \ref{lem:gagliardo}) to find for almost every $t\in I$ that
        \begin{align}
				 \textcolor{black}{	\smash{\|v_h(t)\|_{L^s(\Omega)}\lesssim \|v_h(t)\|_{h,p}^\gamma\|v_h(t)\|_{L^q(\Omega)}^{1-\gamma}\,,}}\label{eq:parabolic_interpolation}
        \end{align}
				        where $\gamma=\smash{\frac{(s-q)dp}{s(p(q+d)-dq)}}\in [0,1]$. Next, we need to distinguish the cases $s>q$ and $s=q$:
        
        \textit{Case $s>q$.} If $s>q$, then, we have that $0<\gamma\leq 1<p$ and, consequently,~$r=\smash{\frac{p}{\gamma}}\in (1,\infty)$. Raising the inequality \eqref{eq:parabolic_interpolation} to the power $r\in (1,\infty)$, integrating with respect to $t\in I$, pulling out the $L^\infty$-norm of the second factor of the integrand and taking the $r$-th root shows the claim.
        
				\textit{Case $s=q$.} If $s = q$, using Hölder’s inequality, the claim follows with $r = \infty$ and $\gamma = 0$.
    \end{proof}

    \begin{corollary}\label{cor:parabolic_interpolation}
				        Let $p\in [\frac{2d}{d+2},\infty)$ and $k\in \mathbb{N}_0$. Then, 
        for every $v_h\in L^\infty(I;\P_k(\Th))$, it holds that
        \begin{align*}
				\smash{	\|v_h\|_{L^{p_*}(Q)}\lesssim  \big\|\|v_h(\cdot)\|_{h,p}\big\|_{L^p(I)}^\gamma\|v_h\|_{L^\infty(I;L^2(\Omega))}^{1-\gamma}\,,}
        \end{align*}
        where $\gamma=\frac{d}{d+2}$ and $p_*=p\frac{d+2}{d}$.\vspace{-5mm}
    \end{corollary}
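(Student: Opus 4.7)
The plan is to derive this corollary as an immediate specialization of Lemma \ref{lem:parabolic_interpolation}, taking $q=2$ and $s=p_{*}=p\tfrac{d+2}{d}$. The first thing to verify is that these choices satisfy the hypothesis $q\leq s$: this amounts to $2 \leq p\tfrac{d+2}{d}$, i.e., $p \geq \tfrac{2d}{d+2}$, which is precisely the lower bound assumed on $p$. One also needs to confirm that the resulting pair $(\gamma, s)$ lies in the admissibility range \eqref{eq:admissibility} for the underlying Gagliardo--Nirenberg inequality: for $p \in [\tfrac{2d}{d+2}, d)$, the target exponent $s=p_{*}$ sits at the upper endpoint of the interval $[q,p_{*}]$ required by \eqref{eq:admissibility.1}, while for $p \in [d,\infty)$, one checks that $\gamma=\tfrac{d}{d+2} < \tfrac{dp}{dp+2(p-d)}$, a routine inequality that is strict since $\tfrac{2d^{2}}{d+2}>0$.

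The second step is a short arithmetic computation. Substituting $q=2$ and $s=p_{*}$ into \eqref{eq:gamma}, both numerator and denominator factor through $p(d+2)-2d$, and one finds
$$\gamma = \frac{\tfrac{1}{2} - \tfrac{d}{p(d+2)}}{\tfrac{1}{2} + \tfrac{1}{d} - \tfrac{1}{p}} = \frac{\,p(d+2)-2d\,}{2p(d+2)} \cdot \frac{2pd}{\,p(d+2)-2d\,} = \frac{d}{d+2}.$$
Similarly, substituting into $r = \tfrac{s(p(q+d) - dq)}{(s-q)d}$ from Lemma \ref{lem:parabolic_interpolation} yields $r=p_{*}$ after the common factor $p(d+2)-2d$ cancels. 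Hence $L^{r}(I;L^{s}(\Omega)) = L^{p_{*}}(I;L^{p_{*}}(\Omega)) = L^{p_{*}}(Q)$ by Fubini's theorem.

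Applying Lemma \ref{lem:parabolic_interpolation} with these parameters then delivers the claimed inequality directly. There is no genuine obstacle here beyond the bookkeeping of exponents and the endpoint admissibility check at $p=\tfrac{2d}{d+2}$ (where $s=q=2$, so the inequality degenerates to Hölder's); the content of the corollary is simply that the natural broken energy space $L^{\infty}(I;L^{2}(\Omega)) \cap L^{p}(I;W^{1,p}(\mathcal{T}_{h}))$ embeds continuously into $L^{p_{*}}(Q)$, with the classical parabolic interpolation exponent $\gamma=\tfrac{d}{d+2}$ inherited from the continuous theory via the discrete Gagliardo--Nirenberg inequality of Lemma \ref{lem:gagliardo}.
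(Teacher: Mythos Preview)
Your proof is correct and follows exactly the paper's approach: apply Lemma~\ref{lem:parabolic_interpolation} with $q=2$ and $s=r=p\tfrac{d+2}{d}$, verify the admissibility conditions \eqref{eq:admissibility}, and compute $\gamma$. One small slip: for $p\in[\tfrac{2d}{d+2},d)$ the relevant check is that $s=p\tfrac{d+2}{d}$ lies in the interval $[2,\tfrac{dp}{d-p}]$ (the Sobolev conjugate, which the paper unfortunately also denotes $p_*$ in \eqref{eq:admissibility}), not that $s$ sits at the upper endpoint---the inequality $p\tfrac{d+2}{d}\le \tfrac{dp}{d-p}$ is precisely equivalent to $p\ge\tfrac{2d}{d+2}$, which is the paper's own verification.
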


    \begin{proof}
			We apply Lemma \ref{lem:parabolic_interpolation} with $q \!= \!2$ and $r \!= \!s\! =\! p_*$, noting that one has admissibility by \eqref{eq:admissibility},~if~${p \!\ge\!  \frac{2d}{d+2}}$. In fact, this is obvious if $p \in [d, \infty)$. For $p \in  [1, d)$, it holds $s=p_* \in [2, p^*]$ if and only if $p\ge  \frac{2d}{d+2}$.
    \end{proof} 

\begin{remark}
Applying the results we have presented so far component-wise, one can obtain analogous statements for vector-valued functions. In this case, one defines the DG~norm~of~$\bw\in W^{1,p}(\Th)^d$ as
    \begin{align*}
    \smash{\|\bw_h\|_{h,p}\coloneqq\Big(\|\nabla_h \bw_h\|_{L^p(\Omega)}^p+\big\|h^{-1/p'}_\Gamma\jump{\bw_h \otimes\bn}\big\|_{L^p(\Gamma_h)}^p\Big)^{\smash{1/p}}}\,.
    \end{align*}
    {\color{black}
    To see this, recall first the norm equivalences in Euclidean space for some $1\leq p\leq q < \infty$:
    \begin{align*}
\left( \sum_{i=1}^d |x_i|^p \right)^{1/p}
\leq d^{\frac{1}{p} - \frac{1}{q}}
\left( \sum_{i=1}^d |x_i|^q \right)^{1/q}\,,
\qquad
\left( \sum_{i=1}^d |x_i|^q \right)^{1/q}
\leq
\left( \sum_{i=1}^d |x_i|^p \right)^{1/p}.
    \end{align*}
Focusing first on the stability property \eqref{eq:scott_zhang_stable}, we see the following using the equivalences above, for arbitrary $\bv \in \mathbb{P}(\Th)^d$:
\begin{align*}
    \|\nabla Q_h \bv\|^p_{L^p(\Omega)}
    &=
    \int_\Omega \left( \sum_{i,j=1}^d |\partial_j Q_h \bv_i|^2 \right)^{p/2}
    \leq \sum_{i=1}^d \int_\Omega \sum_{j=1}^d |\partial_j Q_h \bv_i|^p
    \lesssim
     \sum_{i=1}^d \int_\Omega \left( \sum_{j=1}^d |\partial_j Q_h \bv_i|^2 \right)^{p/2} \\
    &\lesssim
     \sum_{i=1}^d \int_\Omega \left( \sum_{j=1}^d |\partial_{h,j} \bv_i|^2 \right)^{p/2}
     +
     \sum_{i=1}^d \int_\Omega \left( \sum_{j=1}^d |\bv_i^+ \bn_j^+ - \bv_i^- \bn_j^-|^2 \right)^{p/2} 
     \\
    &\lesssim
     \int_\Omega \left( \sum_{i,j=1}^d |\partial_{h,j} \bv_i|^2 \right)^{p/2}
     +
     \int_\Omega \left( \sum_{i,j=1}^d |\bv_i^+ \bn_j^+ - \bv_i^- \bn_j^-|^2 \right)^{p/2}
     =
     \|\bv\|^p_{h,p}\,.
\end{align*}
An analogous argument applies also to the approximation property \eqref{eq:scott_zhang_approx}.
}
\end{remark}

    \begin{remark}
        \textcolor{black}{Denote the broken symmetric gradient of $\bw\in W^{1,p}(\Th)^d$ by $\BD_h\bw \coloneqq  \tfrac{1}{2}(\nabla_h\bw + \nabla_h \bw^\top)$} and
consider the alternative norm, for every $\bw_h \in W^{1,p}(\Th)^d$, defined by
    \begin{align*}
			\smash{{|||}\bm{w}_h{|||}_{h,p} \coloneqq  \Big( \|\BD_h(\bm{w}_h)\|^p_{L^p(\Omega)}+\|h^{-1/p'}_\Gamma\jump{\bm{w}_h\otimes \bm{n}}\|^p_{L^p(\Gamma_h^{i})}
   +\|h^{-1/p'}_\Gamma\bm{w}_h\cdot \bm{n}\|^p_{L^p(\Gamma_h^{\partial})}+\|(\bm{w}_h)_\tau\|^p_{L^p(\Gamma_h^{\partial})} \Big)^{\smash{1/p}}\,,}
    \end{align*}
    where only the normal component $\bw_h \cdot \bn$ is penalised on $\Gamma_h^{\partial}$; here, $(\bw_h)_\tau$ denotes the tangential part of $\bw_h$ on the boundary, i.e., $(\bw_h)_\tau \coloneqq \bw_h - (\bw_h\cdot \bn)\bn$. If one~manages~to~prove~the~existence of a quasi-interpolation operator $Q_h^{\boldsymbol{n}}\colon \P^k(\Th)^d\to W^{1,\infty}(\Omega)^d$ that has analogous stability and approximation properties to those described in Lemma \ref{lem:scott_zhang_stable} and Lemma \ref{lem:scott_zhang_approx}, but using the norm ${|||}\cdot{|||}_{h,p}$, then all the results presented in this work would also apply for the problem with Navier's~slip~boundary~conditions, i.e.,  
    \begin{align*}
    \begin{aligned}
\bu\cdot \bn &= 0 &\quad \text{on }\partial \Omega\,,\\
-(\BS\bn)_\tau &= \gamma \bu_\tau &\quad \text{on }\partial\Omega\,,
 \end{aligned}
    \end{align*}
    where $\gamma>0$ is a parameter. Such a DG method enforces the normal condition $\bu\cdot\bn=0$ weakly, which has been observed to be advantageous in practice;~see,~e.g.,~\cite{GS.2022}. To the best of our knowledge, such an operator is not yet available in the literature.
    \end{remark}

		\section{Stability of DG schemes for non-Newtonian fluids}\label{sec:stablity}
		\subsection{Continuous model and its discretisation}

		\hspace{5mm}Let us assume that the initial data belongs to $\bu_0 \in L^2_{\rmdiv}(\Omega)^d$ and, for simplicity, we will take the forcing function in  $\bm{f}\in C^0(I;L^{p'}(\Omega)^d)$. In the weak formulation of problem \eqref{eq:continuous_PDE}, we look for
		\begin{gather*}
\BS \in L^{p'}(Q)^{d\times d}_{\sym, \tr}\,,\quad
\bu \in L^{p}(I;W^{1,p}_0(\Omega)^d) \cap \Linfty\,, \quad
p \in H^{-1}(I;\Lmean{p'})\,,
		\end{gather*}
		such that for every $\bv\in C^\infty_0(\Omega)^d$, $\phi \in C^\infty_0([0,T))$, and $ q\in C^\infty_0(Q)$, it holds that
		\begin{subequations}\label{eq:weak_PDE}
			\begin{align}
				\BG(\BS, \Du) &= \bm{0} \quad\text{a.e. in }Q\,, \\
				-\int_Q \bu \cdot \bv \partial_t \phi \,\d t\d x
				-\int_\Omega \bu_0 \cdot \bv \phi(0) \,\d x
				+ \int_Q [\BS-\bu \otimes \bu-p\mathbb{I}_d] \fp \BD(\bv) \phi \,\d t\d x
								&= \int_Q \bm{f}\cdot \bv \phi \,\d t\d x\,,\\
				-\int_Q q \rmdiv \bu\,\d t\d x &= 0\,. 
			\end{align}
		\end{subequations}

Note that the exponent $p>1$ is determined by the coercivity condition \eqref{eq:coercivity}. The existence of global weak solutions for large data (assuming $p>\frac{2d}{d+2}$) under monotonicity assumptions for $\BG$ was proved in \cite{BGMS.2012} by working with the graph induced by $\BG$ and also later in \cite{BMM.2021} by working with the function $\BG$ directly. In the non-monotone case, existence of weak solutions is not known, but numerical experiments seem to produce reasonable results \cite{JMPT.2019}.

Let us fix polynomial degrees $k_{\bu}, k_{\pi}\in \Na$ for the  velocity and pressure approximations, respectively; we assume that $k_{\bu}\geq 1$ and $k_{\pi} \leq k_{\bu}$. The spaces corresponding to the discrete approximations~are,~then, defined as
\begin{align*}
\Vh &\coloneqq  \P_{k_{\bu}}(\Th)^d\,,\\
\Mh &\coloneqq  \P_{k_{\pi}}(\Th) \cap \Lmean{p'}\,.
\end{align*}
The space $\Mh$ is equipped with the norm $\norm{\cdot}_{\Lp{p'}}$, while the velocity space $\Vh$~is~equipped~with~the~norm
\begin{align}\label{eq:DG_norm2}
\norm{\cdot}_{h,p} 
\coloneqq \big(
\norm{\BD_h (\cdot)}_{\Lp{p}}^p
+
|\cdot|^p_{\Gamma_h,p}\big)^{1/p}\,,
						\end{align}
			where the \textit{jump semi-norm} for vector-valued functions $\bv_h\in \Vh$ is defined as
			\begin{align}\label{eq:jump_semi-norm2}
|\bv_h|^p_{\Gamma_h,p} \coloneqq
\textcolor{black}{\big\|h^{-1/p'}_\Gamma\jump{\bv_h \otimes\bn}\big\|_{L^p(\Gamma_h)}^p}
=
\int_{\Fh} h_\Gamma^{1-p} |\jump{\bv_h\otimes \bn}|^p\,\d s\,.
			\end{align}
      \textcolor{black}{A useful fact, obtained by combining \eqref{eq:poincare} and \cite[Prop.~2.4]{KR.2022}, is the  \textit{discrete Korn-type inequality}, i.e., for every $\bv_h\in \Vh$, it holds that}
                \begin{align}\label{eq:korn}
                    \begin{gathered}
                        \|\bv_h\|_{L^p(\Omega)}  + \|\nabla_h \bv_h\|_{L^p(\Omega)} \lesssim \|\bv_h\|_{h,p}\,. 
                    \end{gathered}
                \end{align}
Before we present the discretised system, it will be useful to introduce the~notion~of~discrete~gradients. For $l\geq 0$, let us define a \textit{discrete gradient} operator $\mathcal{G}_h^l\colon \Vh \to \P_{\max\{k_{\bu}-1,l\}}(\Th)^{d\times d}$, for every $\bv_h\in \Vh$,  through the relation
\begin{align}\label{eq:discrete_gradient}
\dgrad^l(\bv_h) \coloneqq  \nabla_h \bv_h  - \mathcal{R}^l_h(\bv_h)\quad\text{ in }\P_{\max\{k_{\bu}-1,l\}}(\Th)^{d\times d}\,,
\end{align}
where the \textit{discrete lifting} operator $\mathcal{R}^l_h(\bv_h) \in \P_l(\Th)^{d\times d}$, for every $\bm{t}_h \in \P_l(\Th)^{d\times d}$, is defined through
                \begin{align}\label{eq:lifting_jumps}
                    \int_\Omega \mathcal{R}^l_h(\bv_h)  \fp \bm{t}_h\,\d x
                    =
                     \int_{\Fh} \leftjump \bv_h\otimes \bn \rightjump \fp \avg{\bm{t}_h} \,\d s\,.
                \end{align}
While the natural choice seems to be $l\!=\!k_{\bu}\!-\!1\!\in\! \Na_0$ (this will be set whenever the index~${l\!\in \!\Na_0}$~is~omitted), the number $l\in \Na_0$ is a parameter and can be chosen freely; for instance, if $l=0$, the implementation becomes easier as $\mathcal{R}^l_h$ can be, then, computed through element-wise averages; on the other hand, taking $l=k_{\bu}+1\in\Na$ seems to be advantageous, in the linear case at least, in that the method does not require jump penalisation (cf.\ \cite{JNS.2016}). We will shortly explore yet another choice when defining the discrete convective term. Note that if $\bm{t}_h \in C_0^\infty(\Omega)^{d\times d}$, then this is precisely the distributional gradient of $\bv_h$.\linebreak It is possible to prove stability of the discrete gradient (see, e.g., \cite[Prop.\ 2.1]{dPE.2010} or \cite[Lm.\ 7]{BO.2009}), i.e., that for every $\bv_h \in \Vh$, it holds that
                \begin{align}\label{eq:discrete_gradient_stability}
                    \|\dgrad^l(\bv_h)\|_{L^p(\Omega)} \lesssim \|\bv_h\|_{h,p} \,.
                \end{align}
								The \textit{discrete symmetric gradient} operator $\mathcal{D}^l_h\colon \Vh\to \P_l(\Th)_{\sym}^{d\times d}$, for every $\bv_h\in \Vh$,
        is defined through 
        \begin{align}
            \mathcal{D}^l_h(\bv_h) \coloneqq  \BD_h(\bv_h) - \mathcal{R}^l_{h,\sym}(\bv_h)\quad\text{ in }\P_{\max\{k_{\bu}-1,l\}}(\Th)_{\sym}^{d\times d}\,,
        \end{align} where the \textit{symmetric discrete lifting} operator $ \mathcal{R}_{h,\sym}^l(\bv_h)\in \P_l(\Th)_{\sym}^{d\times d}$, for every $\bm{t}_h \in \P_l(\Th)_{\sym}^{d\times d}$, is defined through
                \begin{align}
                    \int_\Omega \mathcal{R}_{h,\sym}^l(\bv_h)  \fp \bm{t}_h \,\d x
                    =
                     \int_{\Fh} \leftjump \bv_h\otimes \bn \rightjump \fp \avg{\bm{t}_h}\,\d s\,.
                \end{align}
								Similarly, we  define the \textit{discrete divergence} operator $\ddiv^l\colon  \Vh\to \P_{\max\{k_{\bm{u}}-1,l\}}(\Th)$~by~taking~the~trace, i.e.,
        for every $\bv_h\in \Vh$, we define
        \begin{align}
            \ddiv^l(\bv_h) \coloneqq  \tr(\dgrad^l(\bv_h)) = \rmdiv_h(\bv_h) + \tr(\mathcal{R}^l_h(\bv_h))\quad\text{ in }\P_{\max\{k_{\bm{u}}-1,l\}}(\Th)\,.
        \end{align}
        The trace of $\mathcal{R}^l_h(\bv_h)\!\in \!\P_l(\Th)^{d\times d}$  for  $\bv_h\!\in\! \Vh$ can be computed from \eqref{eq:lifting_jumps} by~taking~${\bm{t}_h\! =\! q_h \mathbb{I}_d\!\in\! \P_l(\Th)_{\sym}^{d\times d}}$, where $q_h \in \P_l(\Th)$ is arbitrary and $\mathbb{I}_d\in \mathbb{R}^{d\times d}$ is the identity matrix. In particular, for every $q_h \in \P_l(\Th)$, we can write 
								\begin{align}\label{eq:discrete_divergence}
\int_\Omega q_h \ddiv^l(\bv_h)\,\d x
=
\int_\Omega q_h \rmdiv_h\bv_h\,\d x
- \int_{\Fh} \jump{\bv_h \cdot \bn} \avg{q_h}\,\d s\,.
								\end{align}
								Whenever the index $l\in \Na_0$ is omitted, it is meant that $l= k_{\pi}$, in which case \eqref{eq:discrete_divergence} holds for all $q_h\in \Mh$.

								Regarding the convective term, we wish to preserve the following skew-symmetry~property~that~is valid at the continuous level: for every $\bu,\bv,\bw\in C^\infty_0(\Omega)^d$, where $\rmdiv\bu = 0$ in $\Omega$, it holds that
								\begin{align}
\int_\Omega (\bv\otimes \bu)\fp \nabla \bw \,\d x
= 
-\int_\Omega (\bw\otimes \bu)\fp \nabla \bv\,\d x\,.
								\end{align}
								In the case when discretely divergence-free functions are also point-wise divergence-free (as is, e.g., the case when $\Vh$ is $H(\rmdiv;\Omega)$-conforming and $\Mh = \rmdiv\Vh$),~for~every~${\bu_h,\bv_h,\bw_h\in \Vh}$, we simply define
								\begin{align}\label{eq:convective_term_alternative}
                                    \begin{aligned}
									\hat{\mathcal{C}}_h[\bu_h,\bv_h,\bw_h] &\coloneqq  -\int_\Omega (\bv_h\otimes \bu_h)\fp \dgrad^{2k_{\bu}}(\bw_h)\,\d x
\\&= -\int_\Omega (\bv_h \otimes \bu_h) \fp \nabla_h\bw_h \,\d x
+ \int_{\Fh} \avg{\bv_h \otimes \bu_h}\fp \jump{\bw_h \otimes \bn}\,\d s\,.
\end{aligned}
								\end{align}
								The parameter $2k_{\bu}\in \Na$ in the discrete gradient could be chosen differently, but with this choice one has the second equality, which is straightforward to implement in modern software packages. In general,~we,~then, define the \textit{skew-symmetric convective term} as
								\begin{align}\label{eq:convective_term}
\mathcal{C}_h[\bu_h, \bv_h, \bw_h] \coloneqq
\frac{1}{2}\big[ \hat{\mathcal{C}}_h[\bu_h, \bv_h, \bw_h] 
 - \hat{\mathcal{C}}_h[\bu_h, \bw_h, \bv_h] 
\big].
								\end{align}\pagebreak

Let us now turn our attention towards the time discretisation: we proceed similarly as in \cite{MN.2006,EG21c}. Let $\{\Itau\}_{\tau>0}$ be a family of partitions of the closed time interval $[0,T]$ of the form $ \{I_j\}_{j=1}^{N_\tau}= \{(t_{j-1}, t_j]\}_{j=1}^{N_\tau}$, for some $N_\tau \in \Na$, associated to a (maximal) time step $\tau \coloneqq  \max_{j\in\{1,\ldots ,N_\tau\}} (t_j - t_{j-1})$. We will assume that the family of time partitions is quasi-uniform in the sense that there exists a number $\theta \in (0,1]$ (independent of $\tau>0$) such that\vspace{-1mm}
\begin{align}\label{eq:time_quasiuniform}
\theta \tau \leq \min_{j\in\{1,\ldots, N_\tau\}}(t_j - t_{j-1})\,.
\end{align}
		
We will denote the local space-time cylinders as $Q_j \coloneqq  I_j \times \Omega$ for all $j=1,\ldots, N_\tau$. Then, for a given Banach space $X$ and $k\in \Na_0$, we define the \textit{space of broken (in time) polynomials of global degree $k$ with values in $X$} as\vspace{-1mm}
\begin{align}
\P^k(\Itau;X) \coloneqq  \big\{\bv_\tau\colon [0,T]\to X \mid \bv_\tau|_{I_j}\in \P^k(I_j;X) \text{ for all } j=1,\ldots ,N_\tau \big\}\,.
\end{align}
Note that the functions in $\P^k(\Itau;X)$ are defined at $t=0$ and are left-continuous,~in~particular,~implying that $\bv_\tau(t_j)= \bv_\tau(t_j^-)\coloneqq  \lim_{\smash{s\to t_j^-}} \bv_\tau(s)$  in $X$ at the partition points. For a given function $\bv_\tau\in \P^k(\Itau;X)$, we define the jump at $t_{j-1}$ for every $j\in\{1,\ldots, N_\tau\}$ as
\begin{align}
\begin{aligned}
	\jump{\bv_\tau}_{j-1} &\coloneqq  \bv_\tau(t_{j-1}^+)  - \bv_\tau(t_{j-1})&&\quad \text{ in }X\,,\\
\bv_\tau(t_{j-1}^+) &\coloneqq  \lim_{s\to t_{j-1}^+} \bv_\tau(s)&&\quad \text{ in }X\,.
\end{aligned}
\end{align}

Fix a polynomial degree $k_t\in \Na$ for the time approximation; in the discrete formulation, we will look for a velocity and pressure in the spaces\vspace{-1mm}\enlargethispage{6mm}
\begin{align*}
	\Vht &\coloneqq  \P_{k_t}(\Itau; \Vh)\,,\\
    \Mht &\coloneqq  \P_{k_t}(\Itau; \Mh)\,.
\end{align*}

Now, let $\{\xi_l\}_{l=1}^{k_t+1}$ and $\{\omega_l\}_{l=1}^{k_t +1}$ be the (right-sided) points and weights, respectively, corresponding to the Gauss--Radau quadrature of degree $2k_t\in \Na$ on the reference interval $\hat{I}\coloneqq  (-1,1]$. By applying the transformations $\xi \mapsto \frac{1}{2}(t_{j} + t_{j-1}) + \frac{\xi}{2}(t_j - t_{j-1})$, $\omega \mapsto \frac{\omega}{2}(t_j - t_{j-1})$, one can, then, obtain a quadrature $\{(\xi^j_l,\omega^j_l)\}_{l=1}^{k_t+1}$ on the $I_j$ for all $j\!\in\! \{1,\ldots, N_\tau\}$. This can be used to define the discrete measure $\GRdt$, for every $\boldsymbol{v}_\tau\in C^0(\mathcal{I}_\tau;X)\coloneqq \{\boldsymbol{w}_\tau\in L^\infty(I;X)\mid \boldsymbol{w}_\tau|_{I_j}\in C^0(\overline{I_j};X)\text{ for all }j=1,\dots,N_\tau\}$, where $X$ is a Banach space, as\vspace{-1mm}
\begin{align}\label{eq:discrete_time_measure}
\int_I \boldsymbol{v}_\tau(t)\, \GRdt 
\coloneqq \sum_{j=1}^{N_\tau} \int_{I_j} \boldsymbol{v}_\tau(t)\, \GRdt
\coloneqq  \sum_{j=1}^{N_\tau} \sum_{l=1}^{k_t +1} \omega^j_{l} \boldsymbol{v}_\tau(\xi^j_l)\,.
\end{align}
Here, note the abuse of notation in that we employ the same symbol $\GRdt$ for the integral on all the subintervals $I_j$, $j=1,\ldots, N_\tau$.

We are, eventually, able to introduce the discretisation of \eqref{eq:weak_PDE}. In the discrete formulation, we look for $(\bu_{h,\tau},p_{h,\tau})^\top\in \Vht \times \Mht$ such that for every $(\bv_{h,\tau},q_{h,\tau})^\top\in\Vht\times \Mht$, it holds that
\begin{subequations}\label{eq:discrete_PDE}
\begin{gather}
	\int_Q q_{h,\tau} \ddiv (\bu_{h,\tau})\,\d t\d x + \int_I S^{\pi}_h(p_{h,\tau}; q_{h,\tau}) \,\GRdt = 0\,,
\label{eq:discrete_mass}\\
\sum_{j=1}^{N_\tau}\left[ 
\int_{Q_j} \partial_t \bu_{h,\tau}\cdot \bv_{h,\tau} \,\d t\d x
+ \int_\Omega \jump{\bu_{h,\tau}}_{j-1}\cdot \bv_{h,\tau}(t^+_{j-1})\,\d x
+ \int_{I_j} \mathcal{A}_h(\bu_{h,\tau}; \bv_{h,\tau})\, \GRdt \right.\notag \\
\left.
+ \int_{I_j} \mathcal{C}_h[\bu_{h,\tau},\bu_{h,\tau},\bv_{h,\tau}]\, \GRdt
- \int_{Q_j} p_{h,\tau}\ddiv(\bv_{h,\tau})  \,\d t\d x
\right]
=
\int_Q \bm{f}\cdot \bv_{h,\tau} \,\GRdt\d x\,.
\label{eq:discrete_momentum}
\end{gather}
Here, the initial condition is set as the $L^2$-orthogonal projection into the corresponding discrete space, i.e., $\bu_{h,\tau}(0)\coloneqq\Pi_{\Vh}\bu_0\in \Vh$. The \textit{pressure stabilisation term} above, for every $p_h,q_h \in \Mh$, is defined as
\begin{align}\label{eq:pressure_stabilisation}
	S^{\pi}_h(p_{h}, q_{h}) \coloneqq  \int_{\Gamma_h} h_{\Gamma}^{p'-1} |\jump{p_{h}\bn }|^{p'-2}\jump{p_h \bn} \cdot \jump{q_h\bn}\, \d s\,.
\end{align}
For some $l\in \Na$, the \textit{discretisation of the viscous term}, for every $\bv_h,\bw_h \in \Vh$, is defined as
\begin{align}\label{eq:viscous_term}
	\mathcal{A}_h(\bv_h; \bw_h) \coloneqq  \int_\Omega \hat{\BT}_h \fp \dgrad^l(\bw_h)\,\d x
+ S^{\bu}_h(\bv_h; \bw_h)\,,
\end{align}
where $\hat{\BT}_h\colon \Omega \to \Rds$ is such that
\begin{align}\label{eq:discrete_implicit_relation}
    \BG(\hat{\BT}_h, \hat{\mathcal{G}}_{h,\sym}(\bv_{h})) =  \bm{0} 
\qquad 
\text{in }\Omega\,,
\end{align}
\end{subequations}
where $\hat{\mathcal{G}}_{h,\sym} \in \{\BD_h, \mathcal{G}_{h,\sym}^l\}$. The \textit{velocity stabilisation} for every $\bv_h,\bw_h \in \Vh$, is defined as
\begin{align}\label{eq:velocity_stabilisation}
	S^{\bu}_h(\bv_{h}, \bw_{h}) \coloneqq  \alpha \int_{\Gamma_h} h_{\Gamma}^{1-p} |\jump{\bv_{h}\otimes \bn }|^{p-2}\jump{\bv_h\otimes \bn} \fp \jump{\bw_h \otimes \bn}\, \d s\,,
\end{align}
where $\alpha>0$ is a stabilisation parameter; \textcolor{black}{this term enforces  Sobolev regularity of the velocity vector field as well as the homogeneous Dirichlet boundary condition, as the mesh size tends to zero (cf.~\cite{BE.2008}).}
This choice ensures, thanks to the coercivity condition \eqref{eq:coercivity}, that the discretisation of the viscous term is coercive (in general, for large enough $\alpha>0$), i.e., for every $\bv_h \in \Vh$, it holds that 
\begin{align}\label{eq:discrete_coercivity_A}
\norm{\smash{\hat{\BT}_h}}^{p'}_{\Lp{p'}} +
	\|\bv_h\|^p_{h,p} \lesssim \mathcal{A}_h(\bv_h;\bv_h)\,.
\end{align}

As the discretised system \eqref{eq:discrete_PDE} uses discontinuous polynomials in time, the method~can~be~localised; in practice, the problem is solved on the interval $I_j$ using the information from the (already computed) solution on the previous interval $I_{j-1}$.\medskip

A few additional remarks are in order:\medskip

\noindent 
\textbf{Computing the constitutive relation.} In practice, it is not strictly necessary to compute the function $\smash{\hat{\BS}_{h,\tau}} \colon Q\to \Rds$ corresponding to $\bu_{h,\tau}\in \Vht$ from \eqref{eq:discrete_implicit_relation}. In fact, with modern software tools it is possible to work out the dependence of $\smash{\hat{\BS}_{h,\tau}}$ on $\bu_{h,\tau}$ without~having~to~compute~it~explicitly~(see,~e.g.,~\cite{BH.2021}). For explicit constitutive relations of the type $\BS = \fS(\Du)$, such as \eqref{eq:power_lawA}, this is of course not needed, since one can, then, for every $\bv_h,\bw_h \in \Vh$, write 
\begin{align}\label{eq:viscous_term_explicit}
    \mathcal{A}_h(\bv_h; \bw_h) = \int_\Omega \fS(\hat{\mathcal{G}}_{h,\sym}(\bv_h)) \fp \dgrad^l(\bw_h)\,\d x
+ S^{\bu}_h(\bv_h; \bw_h)\,.
\end{align}
Alternatively, in case a discrete stress is a quantity of interest (or for explicit relations of the type $\Du = \fD(\BS)$ such as \eqref{eq:non_monotone}), one can instead employ a 3-field formulation for the variables $(\BS_{h,\tau},\bu_{h,\tau},p_{h,\tau})^\top$ in the spirit of \cite{FGS.2020}; the results of this work will still hold in that case.\pagebreak

\noindent
\textbf{Various DG methods.} We presented two choices for a discrete gradient in the constitutive relation \eqref{eq:discrete_implicit_relation}. The choice $\smash{\hat{\mathcal{G}}_{h,\sym}}= \mathcal{G}_{h,\sym}^l$, e.g., would lead to a method of Local Discontinuous Galerkin~(LDG)~type. On the other hand, choosing $\smash{\hat{\mathcal{G}}_{h,\sym}} = \BD_h$ leads to an Incomplete Interior Penalty (IIDG) method, which can be advantageous for non-linear problems of the type considered here, since one would not need to explictly compute the lifting terms $\mathcal{R}^l_h(\bu_{h,\tau}),\mathcal{R}^l_h(\bv_{h,\tau})$  in the implementation, thanks to the fact that the full discrete gradient $\mathcal{G}_{h,\sym}^l$ would appear on the test function exclusively (and, therefore, linearly), and so the definition \eqref{eq:lifting_jumps} can be applied directly. Regarding the stabilisation term, one could consider instead 
\begin{align}
	\hat{S}^{\bu}_h(\bv_h; \bw_h) \coloneqq  S^{\bu}_h(\bv_h; \bw_h) - \int_\Omega |\mathcal{R}^l_h(\bv_h)|^{p-2}\mathcal{R}^l_h(\bv_h) \fp \mathcal{R}^l_h(\bw_h)\,\d x
\quad\text{ for all }
\bv_h,\bw_w\in \Vh\,,
\end{align}
which leads to Symmetric Interior Penalty (SIP) methods (cf.\ \cite{MRET.2018}), in the sense that it reduces to the traditional SIP method in the Newtonian case.

\noindent
\textbf{Gauss--Radau Quadrature.} The discrete time measure $\GRdt$ should, in principle, appear in all the time integrals in \eqref{eq:discrete_momentum}; this implies, following the reasoning from \cite{MN.2006,EG21c}, that the method presented here is equivalent to a RadauIIA Runge--Kutta method, which can be readily implemented with many existing software libraries. Note that since the quadrature is exact up to degree $2k_t $, we could omit it from several terms, such as $\int_{Q_j} \partial_t \bu_{h,\tau}\cdot \bv_{h,\tau}\, \GRdt=\int_{Q_j} \partial_t \bu_{h,\tau}\cdot \bv_{h,\tau}\, \d t\d x$.
\textcolor{black}{The RadauIIA method is known for its algebraic stability and its L-stability (or stiff decay and, in particular, A-stability), which makes it suitable for parabolic problems of the type considered in this work (cf.\ \cite{HW.2002,Butcher.2016}).~We~mention,~however, that the arguments presented here can also be applied to other Runge--Kutta methods based on quadrature, such as the LobattoIIIA method (of which Crank-Nicolson is a particular case) and the Gauss--Legendre method (containing the implicit midpoint method). However, the possible semi-implicit nature of some of these methods would lead to CFL-type conditions. For this reason, we focus on the RadauIIA~method exclusively.}

\noindent
\textbf{Divergence constraint and pressure stabilisation.} The motivation behind the pressure stabilisation $S^{\pi}_h$ is the validity of the following inf-sup condition (cf.~\cite[Lm.\ 4.1]{dPE.2010}): 
\begin{align}\label{eq:infsup}
\norm{q_h}_{\Lp{p'}}
\lesssim
\sup_{\bw_h \in \Vh}\frac{\int_\Omega q_h \ddiv(\bw_h)\,\d x}{\norm{\bw_h}_{h,p}}
+ S^{\pi}_h(q_h;q_h)^{\frac{1}{p'}}
\qquad
\text{ for all }\, q_h\in \Mh\,,
\end{align}
whose proof can be found in Appendix \ref{appendix:infsup}. In certain cases, this stabilisation~term~can~be~avoided,~e.g., when matching meshes are used and the pressure is looked for in a continuous subspace (see, e.g., \cite{KR.2022}). Naturally, also for divergence-conforming elements (i.e., when $\Vh \subset H(\rmdiv;\Omega)$ and $\Mh = \rmdiv\Vh$), the stabilisation term $S^{\pi}_h$ is not needed and the divergence constraint \eqref{eq:discrete_mass} simply becomes\enlargethispage{2mm}
\begin{align}
\int_Q q_{h,\tau} \rmdiv\bu_{h,\tau}\,\d x  = 0
\qquad
\text{ for all }\, q_{h,\tau}\in \Mht\,.
\end{align}

\begin{remark}[Method without quadrature]
Sometimes the $\mathrm{dG}(k_t)$ time discretisation method is defined with the usual time integration instead of using the Gauss--Radau quadrature $\GRdt$. In this case, however, the equivalence with a Runge--Kutta method will be lost, in general, \textcolor{black}{and with it the convenience of having available implementations in popular software packages}. That said, the method has also certain nice properties, such as being naturally defined for general $\bm{f}\in L^{p'}(I;W^{-1,p'}(\Omega)^d)$. \textcolor{black}{For instance, for linear problems the only difference is the treatment of the right-hand side, where, e.g., for $\mathrm{dG}(0)$ the method with quadrature employs $\{\bm{f}(t_j)\}_{j\in \{1,\dots,N_\tau\}}$, as quadrature weights,  whereas the method without quadrature leads to the averages $\{\tfrac{1}{t_j-t_{j-1}}\int_{I_j} \bm{f}(t)\,\d t\}_{j\in \{1,\dots,N_\tau\}}$}. All the results in this work also apply to the method without quadrature, with slightly simplified proofs.
\end{remark}

\subsection{A priori estimates and $\Linfty$-stability}

\hspace{5mm}We will now proceed to derive energy estimates for the full discrete problem \eqref{eq:discrete_PDE}.
\textcolor{black}{Incidentally, these a priori estimates can be employed to prove existence of discrete solutions for $p\in (1,\infty)$ through a fixed point argument; see, e.g., \cite{ST.2019} for details.}\vspace{-1mm}\enlargethispage{5mm}

\begin{lemma}[A priori estimates]\label{lem:apriori}
Suppose that $(\bu_{h,\tau}, p_{h,\tau})^\top\in \Vht \times \Mht$ is a solution of problem \eqref{eq:discrete_PDE}, and let $\hat{\BS}_{h,\tau}:Q\to \Rds$ be a function associated to $\bu_{h,\tau}\in \Vht$ in \eqref{eq:discrete_implicit_relation}. Then, assuming the penalty parameter $\alpha>0$ is large enough, there exists a constant $c>0$ (independent of $h,\tau>0$) such that
\begin{align}\label{eq:apriori}
	\begin{split}
\max_{j\in \{1,\ldots, N_\tau\}} \|\smash{\bu_{h,\tau}(t_j)}\|^2_{\Lp{2}}
&+
\sum_{j=1}^{N_\tau} \|\jump{\bu_{h,\tau}}_{j-1}\|^2_{\Lp{2}}
+
\int_I S^{\pi}_h(p_{h,\tau}(t), p_{h,\tau}(t)) \,\GRdt \\
&\quad+
\int_I \|\hat{\BS}_{h,\tau}(t)\|^{p'}_{\Lp{p'}} \,\GRdt
+
\int_I \|\bu_{h,\tau}(t)\|^{p}_{h,p} \,\GRdt
\leq c\,.
\end{split}
\end{align}
For $p=2$, the discrete measure $\GRdt$ can be replaced by the standard measure $\d t$; this is also true for general $p>1$ for the DG method without quadrature.\vspace{-3mm}
\end{lemma}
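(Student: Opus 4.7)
The plan is to test the discrete momentum equation \eqref{eq:discrete_momentum} with $\bv_{h,\tau}=\bu_{h,\tau}$ and the discrete mass equation \eqref{eq:discrete_mass} with $q_{h,\tau}=p_{h,\tau}$, and then add the two identities. Because the pressure--divergence pairing appears against the standard measure $\d t\d x$ in both equations with opposite signs, it cancels exactly, and only $\int_I S^{\pi}_h(p_{h,\tau},p_{h,\tau})\,\GRdt\geq 0$ survives on the left-hand side from the pressure. The skew-symmetry built into \eqref{eq:convective_term} immediately gives $\mathcal{C}_h[\bu_{h,\tau},\bu_{h,\tau},\bu_{h,\tau}]\equiv 0$ at each Gauss--Radau node, so the convective contribution drops out. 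Finally, applying the coercivity bound \eqref{eq:discrete_coercivity_A} pointwise at every quadrature node (and summing with the Gauss--Radau weights) produces the desired lower bound $\int_I(\|\hat{\BS}_{h,\tau}\|_{L^{p'}(\Omega)}^{p'}+\|\bu_{h,\tau}\|_{h,p}^p)\,\GRdt$ on the left, provided $\alpha>0$ is large enough to absorb any lifting contributions hidden inside $\mathcal{A}_h$.

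\textbf{Time-derivative plus jump.} On each slab $I_j$, an exact integration gives
\begin{align*}
\int_{I_j}\!\!\int_\Omega \partial_t\bu_{h,\tau}\cdot \bu_{h,\tau}\,\d t\d x = \tfrac{1}{2}\|\bu_{h,\tau}(t_j)\|_{L^2(\Omega)}^2 - \tfrac{1}{2}\|\bu_{h,\tau}(t_{j-1}^+)\|_{L^2(\Omega)}^2,
\end{align*}
while the polarisation identity $a\cdot(a-b)=\tfrac{1}{2}|a-b|^2+\tfrac{1}{2}(|a|^2-|b|^2)$ rewrites the upwind jump contribution $\int_\Omega \jump{\bu_{h,\tau}}_{j-1}\cdot \bu_{h,\tau}(t_{j-1}^+)\,\d x$ as
\begin{align*}
\tfrac{1}{2}\|\jump{\bu_{h,\tau}}_{j-1}\|_{L^2(\Omega)}^2+\tfrac{1}{2}\bigl(\|\bu_{h,\tau}(t_{j-1}^+)\|_{L^2(\Omega)}^2-\|\bu_{h,\tau}(t_{j-1})\|_{L^2(\Omega)}^2\bigr).
\end{align*}
Adding both and summing over $j\in\{1,\dots,N\}$ for any $N\in\{1,\dots,N_\tau\}$ telescopes to $\tfrac{1}{2}\|\bu_{h,\tau}(t_N)\|_{L^2(\Omega)}^2+\tfrac{1}{2}\sum_{i=1}^{N}\|\jump{\bu_{h,\tau}}_{i-1}\|_{L^2(\Omega)}^2 - \tfrac{1}{2}\|\Pi_{\Vh}\bu_0\|_{L^2(\Omega)}^2$, and the $L^2$-stability of the projection bounds the initial contribution by $\|\bu_0\|_{L^2(\Omega)}^2$.

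\textbf{Closing and obstacle.} The forcing term is estimated with Hölder's inequality in space, the discrete Korn/Poincaré estimate \eqref{eq:korn}, and Young's inequality, giving
\begin{align*}
\Bigl|\int_I\!\!\int_\Omega \bm{f}\cdot \bu_{h,\tau}\,\d x\,\GRdt\Bigr|
\leq C_\varepsilon \int_I \|\bm{f}(t)\|_{L^{p'}(\Omega)}^{p'}\,\GRdt + \varepsilon\int_I \|\bu_{h,\tau}(t)\|_{h,p}^p\,\GRdt,
\end{align*}
and choosing $\varepsilon>0$ small absorbs the last term into the coercivity lower bound from Step 1; the first term is finite since $\bm{f}\in C^0(I;L^{p'}(\Omega)^d)$. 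Taking the maximum over $N\in\{1,\dots,N_\tau\}$ then produces \eqref{eq:apriori} uniformly in $h,\tau>0$. The main bookkeeping difficulty is tracking which terms of \eqref{eq:discrete_PDE} carry the quadrature measure $\GRdt$ and which carry the exact measure $\d t\d x$, so as to preserve the pressure cancellation (both divergence pairings must sit against $\d t\d x$) as well as the exact energy identity for the time-derivative-plus-jump contribution. For the final assertions: when $p=2$, $\mathcal{A}_h(\bu_{h,\tau};\bu_{h,\tau})$ restricted to any slab $I_j$ is a time polynomial of degree at most $2k_t$, and the Gauss--Radau quadrature is exact on this space, so $\GRdt$ may be replaced by $\d t$; the same conclusion is immediate for the no-quadrature variant, where $\d t$ is already used throughout.
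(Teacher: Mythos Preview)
Your proof is correct and follows essentially the same approach as the paper: test the momentum equation with $\bu_{h,\tau}$ and the mass equation with $p_{h,\tau}$ on each slab, exploit the cancellation of the divergence--pressure pairing and the skew-symmetry of $\mathcal{C}_h$, use the coercivity \eqref{eq:discrete_coercivity_A}, rewrite the time-derivative plus jump via the polarisation identity, telescope, and close with H\"older--Young plus discrete Poincar\'e/Korn on the forcing term. Your observation about tracking which integrals carry $\GRdt$ versus $\d t\d x$ is exactly the relevant bookkeeping, and your justification of the final sentence (exactness of Gauss--Radau on degree $2k_t$ polynomials when $p=2$) is the intended one.
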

\begin{proof}
Testing the equations \eqref{eq:discrete_mass} with $q_{h,\tau}\coloneqq p_{h,\tau}\chi_{I_j}\in \Mht$ and \eqref{eq:discrete_momentum}  with $\bv_{h,\tau}\coloneqq \bu_{h,\tau}\chi_{I_j}\in \Vht$  for all $j\in \{1,\dots,N_\tau\}$ and, subsequently, adding the resulting equations, recalling the skew-symmetry property of $\mathcal{C}_h$ (cf. \eqref{eq:convective_term}), for every $j\in \{1,\dots,N_\tau\}$, we find that
\begin{align}\label{eq:apriori.1}
	\begin{aligned}
\frac{1}{2}\int_{I_j} \frac{\d}{\d t}\norm{\smash{\bu_{h,\tau}}}^2_{\Lp{2}}\, \d t
&+
\int_\Omega (\bu_{h,\tau}(t^+_{j-1})- \bu_{h,\tau}(t_{j-1}))\cdot \bu_{h,\tau}(t^+_{j-1})\, \d x
+
\int_{I_j} \mathcal{A}_h(\bu_{h,\tau}; \bu_{h,\tau}) \,\GRdt \\
&\quad+
\int_{I_j} S^{\pi}_h(p_{h,\tau}, p_{h,\tau})\, \GRdt
=
\int_{I_j}\int_{\Omega}{ \bm{f} \cdot \bu_{h,\tau}\,\d x} \,\GRdt\,.
	\end{aligned}\hspace*{-1mm}
\end{align}
Let us assume that the jump penalisation parameter $\alpha>0$ is large enough, so that the coercivity property \eqref{eq:discrete_coercivity_A} is satisfied. Then, using the fact that $2a(a-b)= a^2 - b^2 + (a-b)^2$ for all $a,b\in \mathbb{R}$, together with Hölder’s inequality  from \eqref{eq:apriori.1}, for every $j\in \{1,\dots,N_\tau\}$, we deduce that
\begin{align}\label{eq:apriori.2}
	\begin{aligned}
&\frac{1}{2}\norm{\smash{\bu_{h,\tau}(t_j)}}^2_{\Lp{2}}
- \frac{1}{2}\norm{\smash{\bu_{h,\tau}(t_{j-1})}}^2_{\Lp{2}}
+ \frac{1}{2}\|\jump{\bu_{h,\tau}}_{j-1}\|^2_{\Lp{2}}
+
\int_{I_j} \norm{\smash{\hat{\BS}_{h,\tau}(t)}}^{p'}_{\Lp{p'}} \,\GRdt \\
&\quad 
+ \int_{I_j} \norm{\smash{\bu_{h,\tau}}}^p_{h,p} \GRdt
+ \int_{I_j} S^{\pi}_h (p_{h,\tau}(t); p_{h,\tau}(t)) \,\GRdt \\
&\lesssim 
\bigg(\int_{I_j} \norm{\bm{f}}^{p'}_{\Lp{p'}} \,\GRdt \bigg)^{\smash{1/p'}}
\bigg( \int_{I_j} \norm{\smash{\bu_{h,\tau}}}^p_{\Lp{p}} \,\GRdt \bigg)^{\smash{1/p}}\,.
	\end{aligned}
\end{align}
Applying the $\varepsilon$-Young inequality on the right-hand-side of \eqref{eq:apriori.2}, the discrete Poincar\'e inequality \eqref{eq:poincare}, the discrete Korn inequality \eqref{eq:korn}, and summation with respect to $j\in \{1,\ldots, i\}$, where $i\in\{1, \ldots, N_\tau\}$, for every $i\in \{1, \ldots, N_\tau\}$, for $\varepsilon>0$ small enough, we arrive at
\begin{align}\label{eq:apriori.3}
		\begin{aligned}
&\|\bu_{h,\tau}(t_i)\|^2_{\Lp{2}}
+ \sum_{j=1}^i \frac{1}{2}\|\jump{\bu_{h,\tau}}_{j-1}\|^2_{\Lp{2}}
+ 
\int_0^{t_i} S^{\pi}_h (p_{h,\tau}; p_{h,\tau}) \,\GRdt \\
&\quad +
\int_0^{t_i} \norm{\smash{\hat{\BS}_{h,\tau}}}^{p'}_{\Lp{p'}} \,\GRdt 
+
\int_0^{t_i} \norm{\smash{\bu_{h,\tau}}}^p_{h,p} \,\GRdt 
\lesssim 
\|\bu_0\|^2_{\Lp{2}}
+
\norm{\bm{f}}_{C^0(I;\Lp{p'})}^{p'}\,.
	\end{aligned}
\end{align}
Here, we made use of $\norm{\smash{\bu_{h,\tau}(0)}}_{\Lp{2}} \leq \norm{\bu_0}_{\Lp{2}}$, which is based on the stability of the $L^2$-projection. Taking the maximum with respect to $i\in \{1,\ldots, N_\tau\}$ concludes the proof.
\end{proof}

In the lowest order time discretisation $\mathrm{dG}(0)$, the discrete velocity is piece-wise constant~in~time~and so from the a priori estimate \eqref{eq:apriori} above, one immediately has that (for arbitrary $p>1$)
\begin{align*}
\norm{\smash{\bu_{h,\tau}}}_{\Linfty}
=
\max_{j\in \{1,\ldots, N_\tau\}} \norm{\smash{\bu_{h,\tau}(t_j)}}_{\Lp{2}}
\leq 
c\,.
\end{align*}
One of the main goals of this work is to prove that this bound remains valid for general~polynomial~degree. This will be carried out in the upcoming section.

{\color{black}
\subsubsection{Interlude: the heat equation}
\hspace{5mm}For the sake of clarity, we first present the argument for the derivation of the $L^\infty(I;L^2(\Omega)^d)$-bound in a simplified setting, namely the heat equation and a time discretisation method without quadrature. This type of estimate was obtained for this system for conforming-in-space discretisations in \cite{CW.2006}.
Therefore, in place of \eqref{eq:discrete_PDE}, consider the discrete system seeking for $\bu_{h,\tau}\in \Vht$ such that for every $\bv_{h,\tau}\in \Vht$, it holds that
\begin{align}\label{eq:heat}
	\begin{aligned}
\sum_{j=1}^{N_\tau}\left[ 
\int_{Q_j} \partial_t \bu_{h,\tau}\cdot \bv_{h,\tau} \,\d t\d x
+ \int_\Omega \jump{\bu_{h,\tau}}_{j-1}\cdot \bv_{h,\tau}(t^+_{j-1})\,\d x
+ \int_{Q_j}  \hat{\mathcal{G}}_{h}(\bu_{h,\tau}) \fp \mathcal{G}_h(\bv_{h,\tau})\, \d t\d x \right. \\
\left.
    + \int_{I_j} S_h^{\bu}(\bu_{h,\tau}, \bv_{h,\tau}) \, \d t
\right]
=
\int_Q \bm{f}\cdot \bv_{h,\tau} \,\d t\d x \, . 
\end{aligned}
\end{align}
Observe that, in this case, the a priori estimate \eqref{eq:apriori} becomes:
\begin{align}\label{eq:apriori_heat}
\max_{j\in \{1,\ldots, N_\tau\}} \|\smash{\bu_{h,\tau}(t_j)}\|^2_{\Lp{2}}
+
\sum_{j=1}^{N_\tau} \|\jump{\bu_{h,\tau}}_{j-1}\|^2_{\Lp{2}}
+
\int_I \|\bu_{h,\tau}(t)\|^2_{h,2} \, \d t
\leq c\,.
\end{align}

We will make use of the exponential time interpolant from \cite{CW.2010b}. Fix a parameter $\lambda>0$. For every $j\in\{1,\ldots,N_\tau\}$, we define for polynomials on $I_j$, the linear mapping $\overline{(\cdot)}\coloneqq (r\mapsto \overline{r})\colon \P_{k_t}(I_j) \to \P_{k_t}(I_j)$, for every $r\in \P_{k_t}(I_j)$, through
\begin{subequations}\label{eq:exponential_interpolant_properties}
\begin{align}
\overline{r}(t_{j-1}^+) &= r(t_{j-1}^+)\,, \\
\int_{I_j} \overline{r}(t) q(t) \,\d t 
&=
\int_{I_j} r(t) q(t) e^{-\lambda(t-t_{j-1})} \,\d t
\qquad
\text{ for all } q\in \P_{k_t -1}(I_j)\,.\end{align}
\end{subequations}
Then,~${\overline{(\cdot)}\!\coloneqq \!(\bv_{h,\tau}\!\mapsto\! \overline{\bv}_{h,\tau})\colon \!\P_{k_t}(I_j;\Vh) \!\to \!\P_{k_t}(I_j;\Vh)}$, for every $\bv_{h,\tau}\in\P_{k_t}(I_j;\Vh)$, can be defined through
\begin{align}\label{eq:exponential_interpolant}
	\bv_{h,\tau} =
\sum_{i=0}^k r_i(t) \bv_h^i \in \P_{k_t}(I_j; \Vh)
\mapsto
\overline{\bv}_{h,\tau}=\sum_{i=0}^k \overline{r_i}(t) \bv_h^i \in \P_{k_t}(I_j; \Vh)\,.
\end{align}
One can extend this definition for functions in $\Vht$ in the obvious way. From \cite[Lm.\ 3.6]{CW.2010b} we~know~that if $\norm{\cdot}_{\star}$ is a (semi-)norm on $\Vh$ arising from an (semi-)inner product, then \eqref{eq:exponential_interpolant}~is~\mbox{$L^s(I_j;\Vh)$-stable}, i.e.,
\begin{subequations}\label{eq:exp_stability_star}
\begin{align}
	\bigg(\int_{I_j} \|\overline{\bv}_{h,\tau}(t)\|^s_\star \,\d t \bigg)^{\smash{1/s}}
&	\lesssim
	\bigg(\int_{I_j} \|\bv_{h,\tau}(t)\|^s_\star \,\d t \bigg)^{\smash{1/s}}
&&	\quad\text{ for all } \bv_{h,\tau}\in \P_{k_t}(I_j;\Vh)\,,\; s\in[1,\infty)\,, \label{eq:exp_stability_p_star}\\
	\max_{t\in I_j} \|\overline{\bv}_{h,\tau}(t)\|_\star
&\lesssim
	\max_{t\in I_j} \|\bv_{h,\tau}(t)\|_\star 
&& \quad\text{ for all } \bv_{h,\tau}\in \P_{k_t}(I_j; \Vh)\,. \label{eq:exp_stability_infty_star}
\end{align}
\end{subequations}
In particular, for problem \eqref{eq:heat}, we will apply this result with $\|\cdot\|_\star = \|\cdot\|_{h,2}$.

\begin{proposition}\label{thm:stability_heat}
Suppose that $\bu_{h,\tau}\in \Vht$ is a solution of problem \eqref{eq:heat}. Then, assuming that  $\alpha>0$ is large enough, there exists a constant $c>0$ (independent of $h,\tau>0$) such that
\begin{align}\label{eq:Linfty_stability_heat}
\|\bu_{h,\tau}\|_{\Linfty} \leq c\,.
\end{align}
\end{proposition}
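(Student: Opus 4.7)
The plan is to test the discrete formulation \eqref{eq:heat} restricted to a generic interval $I_j$, $j\in\{1,\ldots,N_\tau\}$, with the exponential time interpolant $\overline{\bu}_{h,\tau}\chi_{I_j}\in\Vht$ from \eqref{eq:exponential_interpolant}, using the weight $\lambda\coloneqq 1/\tau$. Since $\partial_t\bu_{h,\tau}|_{I_j}\in\P_{k_t-1}(I_j;\Vh)$, the defining property \eqref{eq:exponential_interpolant_properties}, applied via the basis decomposition \eqref{eq:exponential_interpolant}, together with an integration by parts in time, will yield the identity
\begin{align*}
\int_{Q_j}\partial_t\bu_{h,\tau}\cdot\overline{\bu}_{h,\tau}\,\d t\d x
&=\int_{Q_j}\partial_t\bu_{h,\tau}\cdot\bu_{h,\tau}\,e^{-\lambda(t-t_{j-1})}\,\d t\d x\\
&=\tfrac{e^{-\lambda\tau_j}}{2}\|\bu_{h,\tau}(t_j)\|^2_{L^2(\Omega)}-\tfrac{1}{2}\|\bu_{h,\tau}(t_{j-1}^+)\|^2_{L^2(\Omega)}+\tfrac{\lambda}{2}\int_{I_j}e^{-\lambda(t-t_{j-1})}\|\bu_{h,\tau}(t)\|^2_{L^2(\Omega)}\,\d t\,.
\end{align*}
The jump term is then handled by combining the endpoint property $\overline{\bu}_{h,\tau}(t_{j-1}^+)=\bu_{h,\tau}(t_{j-1}^+)$ from \eqref{eq:exponential_interpolant_properties} with the polarisation identity $2a\cdot(a-b)=|a|^2-|b|^2+|a-b|^2$ to obtain
\begin{align*}
\int_{\Omega}\jump{\bu_{h,\tau}}_{j-1}\cdot\overline{\bu}_{h,\tau}(t_{j-1}^+)\,\d x=\tfrac{1}{2}\|\bu_{h,\tau}(t_{j-1}^+)\|^2_{L^2(\Omega)}-\tfrac{1}{2}\|\bu_{h,\tau}(t_{j-1})\|^2_{L^2(\Omega)}+\tfrac{1}{2}\|\jump{\bu_{h,\tau}}_{j-1}\|^2_{L^2(\Omega)}\,,
\end{align*}
which cancels the sign-ambiguous $\|\bu_{h,\tau}(t_{j-1}^+)\|^2_{L^2(\Omega)}$-term in favour of the a priori controlled quantity $\|\bu_{h,\tau}(t_{j-1})\|^2_{L^2(\Omega)}$.

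The remaining viscous and source terms will be controlled by Cauchy--Schwarz, the boundedness $\|\mathcal{G}_h(\bv_h)\|_{L^2(\Omega)}+\|\hat{\mathcal{G}}_h(\bv_h)\|_{L^2(\Omega)}\lesssim\|\bv_h\|_{h,2}$ (cf.~\eqref{eq:discrete_gradient_stability}), and the stability bound \eqref{eq:exp_stability_p_star} applied successively with $\|\cdot\|_\star=\|\cdot\|_{h,2}$ and $\|\cdot\|_\star=\|\cdot\|_{L^2(\Omega)}$ (both Hilbertian on $\Vh$, so that the hypothesis of \eqref{eq:exp_stability_star} is met). This will give
\begin{align*}
\left|\int_{Q_j}\hat{\mathcal{G}}_h(\bu_{h,\tau})\fp\mathcal{G}_h(\overline{\bu}_{h,\tau})\,\d t\d x\right|+\left|\int_{I_j}S^{\bu}_h(\bu_{h,\tau},\overline{\bu}_{h,\tau})\,\d t\right|\lesssim\int_{I_j}\|\bu_{h,\tau}\|^2_{h,2}\,\d t\,,
\end{align*}
together with the $\varepsilon$-Young estimate
\begin{align*}
\left|\int_{Q_j}\bm{f}\cdot\overline{\bu}_{h,\tau}\,\d t\d x\right|\leq\varepsilon\int_{I_j}\|\bu_{h,\tau}\|^2_{L^2(\Omega)}\,\d t+C_\varepsilon\int_{I_j}\|\bm{f}\|^2_{L^2(\Omega)}\,\d t\,.
\end{align*}

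The decisive step in extracting the $L^\infty(I_j;L^2(\Omega)^d)$-bound is the observation that $e^{-\lambda(t-t_{j-1})}\geq e^{-\lambda\tau_j}\geq e^{-1}$ on $I_j$, since $\lambda\tau_j\leq\tau_j/\tau\leq 1$; hence the damping term produced by the time derivative dominates a fixed multiple of $\tau^{-1}\int_{I_j}\|\bu_{h,\tau}\|^2_{L^2(\Omega)}\,\d t$. Combining this with the time-inverse estimate $\|\bu_{h,\tau}\|^2_{L^\infty(I_j;L^2(\Omega))}\lesssim\tau_j^{-1}\|\bu_{h,\tau}\|^2_{L^2(I_j;L^2(\Omega))}$ (valid since $\bu_{h,\tau}|_{I_j}\in\P_{k_t}(I_j;\Vh)$) and the quasi-uniformity \eqref{eq:time_quasiuniform}, we arrive, after absorbing the $\varepsilon$-term for $\varepsilon>0$ small, at
\begin{align*}
\|\bu_{h,\tau}\|^2_{L^\infty(I_j;L^2(\Omega))}\lesssim\|\bu_{h,\tau}(t_{j-1})\|^2_{L^2(\Omega)}+\int_{I_j}\|\bu_{h,\tau}\|^2_{h,2}\,\d t+\int_{I_j}\|\bm{f}\|^2_{L^2(\Omega)}\,\d t\,.
\end{align*}
Taking the maximum over $j\in\{1,\ldots,N_\tau\}$ and invoking the a priori estimate \eqref{eq:apriori_heat} (with $\|\bu_{h,\tau}(0)\|_{L^2(\Omega)}\leq\|\bu_0\|_{L^2(\Omega)}$ in the case $j=1$) will then yield the claim \eqref{eq:Linfty_stability_heat}. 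The main obstacle is that the natural test function $e^{-\lambda(t-t_{j-1})}\bu_{h,\tau}$ is not polynomial in time, and hence inadmissible; the exponential interpolant bypasses this, but its use in the diffusion and source terms is only tame because \eqref{eq:exp_stability_star} requires a Hilbertian structure on $\Vh$ --- precisely the reason this clean argument is restricted to $p=2$ and foreshadows the extra technical care needed in the non-Hilbertian ($p\neq 2$) setting treated in Section \ref{sec:stablity}.
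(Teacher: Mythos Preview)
Your proof is correct and follows essentially the same route as the paper: test with the exponential interpolant $\overline{\bu}_{h,\tau}\chi_{I_j}$, exploit \eqref{eq:exponential_interpolant_properties} together with integration by parts to produce the weighted term $\tfrac{\lambda}{2}\int_{I_j}e^{-\lambda(t-t_{j-1})}\|\bu_{h,\tau}\|_{L^2(\Omega)}^2\,\d t$, control the remaining terms via the Hilbertian stability \eqref{eq:exp_stability_star} and the a~priori bound \eqref{eq:apriori_heat}, set $\lambda=\tau^{-1}$, and conclude with the time inverse estimate. The only cosmetic difference is your treatment of the source term --- you use $\varepsilon$-Young and absorb $\varepsilon\int_{I_j}\|\bu_{h,\tau}\|_{L^2(\Omega)}^2\,\d t$ into the left-hand side, whereas the paper applies H\"older together with the discrete Poincar\'e inequality \eqref{eq:poincare} to bound $\int_{Q_j}\bm{f}\cdot\overline{\bu}_{h,\tau}\,\d t\d x$ directly by $\|\bm{f}\|_{C^0(\overline{I};L^2(\Omega)^d)}\big(\int_{I_j}\|\bu_{h,\tau}\|_{h,2}^2\,\d t\big)^{1/2}\leq c$; both lead to the same final inequality.
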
\vspace{-3mm}

\begin{proof}
	For $k_t = 0$, the result is a direct consequence of  \eqref{eq:apriori_heat}, so we will only consider the case $k_t>0$.\enlargethispage{7mm}

Fix an arbitrary $j\in \{1,\ldots, N_\tau\}$. We will prove the claim on $L^{\infty}(I_j; L^2(\Omega)^d)$, from which the result \eqref{eq:Linfty_stability} trivially follows. Denote the exponential interpolant of $\bu_{h,\tau}\in \P_{k_t}(I_j;\Vh)$  by $\overline{\bu}_{h,\tau}\in \P_{k_t}(I_j;\Vh)$. Using \eqref{eq:exponential_interpolant_properties}, for every $j\in \{1,\dots,N_\tau\}$, we can examine what happens to the time derivative if we test the problem~\eqref{eq:heat} with $\bv_{h,\tau}\coloneqq\overline{\bu}_{h,\tau}\chi_{I_j}\in \P_{k_t}(I_j;\Vh)$:
\begin{gather}
\int_{Q_j} \partial_t \bu_{h,\tau} \cdot \overline{\bu}_{h,\tau} \,\d t\d x
+ \int_\Omega \jump{\bu_{h,\tau}}_{j-1} \cdot \overline{\bu}_{h,\tau}(t^+_{j-1})\,\d x
= \frac{1}{2} \norm{\smash{\bu_{h,\tau}(t_j)}}^2_{L^2(\Omega)} e^{-\lambda(t_j-t_{j-1})} 
- \frac{1}{2} \norm{\smash{\bu_{h,\tau}(t_{j-1}^+)}}^2_{L^2(\Omega)}\notag\\
+ \frac{\lambda}{2} \int_{I_j} \|\bu_{h,\tau}(t)\|^2_{L^2(\Omega)} e^{-\lambda(t-t_{j-1})} \,\d t
+ \int_\Omega \jump{\bu_{h,\tau}}_{j-1} \cdot \bu_{h,\tau}(t^+_{j-1})\,\d x 
= \frac{1}{2} \norm{\smash{\bu_{h,\tau}(t_j)}}^2_{L^2(\Omega)} e^{-\lambda(t_j-t_{j-1})} \notag\\
+ \frac{1}{2}\|\jump{\bu_{h,\tau}}_{j-1}\|^2_{L^2(\Omega)}
- \frac{1}{2} \norm{\smash{\bu_{h,\tau}(t_{j-1})}}^2_{L^2(\Omega)}
+ \frac{\lambda}{2} \int_{I_j} \|\bu_{h,\tau}(t)\|^2_{L^2(\Omega)} e^{-\lambda(t-t_{j-1})} \,\d t,	\label{eq:Linfty_stability_heat.1}
\end{gather}
where we simply used integration-by-parts in the first term. Noting that $(t\mapsto e^{-\lambda(t-t_{j-1})})\colon \mathbb{R}\to \mathbb{R}_{\ge 0}$, for every $j\in \{1,\dots,N_\tau\}$, is decreasing and dropping positive terms,  from	\eqref{eq:Linfty_stability_heat.1}, for every $j\in \{1,\dots,N_\tau\}$,
we  deduce that
\begin{align}\label{eq:Linfty_stability_heat.2}
	\begin{aligned}
 \frac{\lambda}{2}e^{-\lambda(t_j-t_{j-1})} \int_{I_j} \|\bu_{h,\tau}(t)\|^2_{L^2(\Omega)}  \,\d t
&\leq 
 \frac{1}{2} \norm{\smash{\bu_{h,\tau}(t_{j-1})}}^2_{L^2(\Omega)}
 - \int_{Q_j} \hat{\mathcal{G}}_{h}(\bu_{h,\tau}) \fp \dgrad(\overline{\bu}_{h,\tau}) \,\d t \, \d x \\
&\quad - \int_{I_j} S_h^{\bu}(\bu_{h,\tau}, \overline{\bu}_{h,\tau}) \,\d t
 + \int_{Q_j} \bm{f}\cdot \overline{\bu}_{h,\tau} \,\d t\, \d x\,.
\end{aligned}
\end{align}
The first three terms on the right-hand-side of \eqref{eq:Linfty_stability_heat.2} can be handled immediately as a consequence of the stability properties \eqref{eq:exp_stability_star} and the a priori estimate \eqref{eq:apriori_heat}. Note, in particular, that the jump penalisation term is controlled by the $\|\cdot\|_{h,2}$-norm. As for the last term on the right-hand-side of \eqref{eq:Linfty_stability_heat.2}, from an application of Hölder's inequality, the discrete Poincar\'e inequality \eqref{eq:poincare}, and the stability estimate~\eqref{eq:exp_stability_p_star}, for every $j\in \{1,\dots,N_\tau\}$, it follows that
\begin{align}\label{eq:Linfty_stability_heat.3}
	\begin{aligned}
    \left|\int_{Q_j} \bm{f}\cdot \overline{\bu}_{h,\tau} \,\d t\, \d x \right|
	&\leq 
\bigg(\int_{I_j} \norm{\bm{f}(t)}^{2}_{L^{2}(\Omega)} \,\d t \bigg)^{\smash{1/2}}
\bigg(\int_{I_j} \|\overline{\bu}_{h,\tau}(t)\|^{2}_{L^{2}(\Omega)} \,\d t \bigg)^{\smash{1/2}} \\
									 &\lesssim
\|\bm{f}\|_{C^0(I_j;L^{2}(\Omega)^d)}
\bigg(\int_{I_j} \|\bu_{h,\tau}(t)\|^{2}_{h,2} \,\d t \bigg)^{\smash{1/2}} 
\leq c\,.
\end{aligned}
\end{align}
Putting everything together, using \eqref{eq:Linfty_stability_heat.2} and \eqref{eq:Linfty_stability_heat.3} in \eqref{eq:Linfty_stability_heat.1}, for every $j\in \{1,\dots,N_\tau\}$, we arrive at
\begin{align}\label{eq:almost_stability_heat}
 \frac{\lambda}{2}e^{-\lambda(t_j-t_{j-1})} \int_{I_j} \|\bu_{h,\tau}(t)\|^2_{L^2(\Omega)}  \,\d t
 \leq 
 c\,.
\end{align}
On the other hand, the equivalence of norms in finite dimensional spaces and the quasi-uniformity \eqref{eq:time_quasiuniform} of the time partition imply that (cf. \cite[Lm.\ 3.5]{CW.2010b})
\begin{align}\label{eq:inverse_ineq_time}
\|\bu_{h,\tau}\|^2_{L^\infty(I_j;L^2(\Omega))}
\lesssim
\frac{1}{\tau} \int_{I_j} \norm{\smash{\bu_{h,\tau}(t)}}^2_{L^2(\Omega)} \,\d t\,.
\end{align}
Hence, choosing $\lambda=\tau^{-1}$ in \eqref{eq:almost_stability} yields the claimed a priori estimate.
\end{proof}

\subsubsection{The non-linear problem}

\hspace{5mm}We return now to the non-linear setting. Note that in the definition of the exponential interpolant above, one could use the discrete measure $\GRdt$ as~well,~since~the~Gauss--Radau quadrature integrates exactly up to degree $2k_t$.

Now, as mentioned in the previous section, the stability properties are known to hold whenever the norm $\|\cdot\|_\star$ arises from an inner product which is not the case in the non-linear setting whenever $p\neq 2$. Moreover, when $p\neq 2$, the integrands are not polynomials in time, and so one cannot replace the discrete measure $\GRdt$ with $\d t$.
The following lemma, whose proof can be found in Appendix \ref{appendix:stability}, proves that thankfully these stability properties also hold in the non-linear setting with quadrature-based integrals.

\begin{lemma}\label{lem:exp_stability}
Let $s\in (1,\infty)$ and  $\norm{\cdot}_\star$ is a (semi-)norm on $\Vh$ arising from an (semi-)inner product. Then, the exponential interpolant \eqref{eq:exponential_interpolant}, for every $\bv_{h,\tau}\in \P_{k_t}(I_j;\Vh)$ and $j\in \{1,\ldots, N_\tau\}$, satisfies
\begin{subequations}
\begin{align}
	\bigg(\int_{I_j} \|\overline{\bv}_{h,\tau}(t)\|^s_\star\,  \GRdt \bigg)^{\smash{1/s}}
&	\lesssim
	\bigg(\int_{I_j} \|\bv_{h,\tau}(t)\|^s_\star\,  \GRdt \bigg)^{\smash{1/s}}\,, \label{eq:exp_stability_GR_p_star}\\
	\bigg(\int_{I_j} \|\overline{\bv}_{h,\tau}(t)\|^s_{h,s}\,  \d t \bigg)^{\smash{1/s}}
&	\lesssim
	\bigg(\int_{I_j} \|\bv_{h,\tau}(t)\|^s_{h,s}  \, \d t \bigg)^{\smash{1/s}}\,, \label{eq:exp_stability_p}\\
	\bigg(\int_{I_j} \|\overline{\bv}_{h,\tau}(t)\|^s_{h,s} \, \GRdt \bigg)^{\smash{1/s}}
&	\lesssim
	\bigg(\int_{I_j} \|\bv_{h,\tau}(t)\|^s_{h,s}\, \GRdt \bigg)^{\smash{1/s}}\,. \label{eq:exp_stability_GR_p}
\end{align}
\end{subequations}
\end{lemma}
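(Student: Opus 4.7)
The plan is to rescale each estimate to the reference time interval $\hat I = (-1, 1]$ and to control the rescaled exponential interpolant via a compactness argument in its single remaining parameter. Under the affine map $\hat t = (2t - t_j - t_{j-1})/(t_j - t_{j-1})$, the interpolant $\overline{(\cdot)}$ becomes the linear operator $T_{\hat\lambda}\colon \mathbb{P}_{k_t}(\hat I) \to \mathbb{P}_{k_t}(\hat I)$ (extended componentwise to $\mathbb{P}_{k_t}(\hat I; \Vh)$) characterised by $T_{\hat\lambda}\hat r(-1^+) = \hat r(-1^+)$ and $\int_{\hat I} T_{\hat\lambda}\hat r \, q\,\d\hat t = \int_{\hat I}\hat r\,q\,e^{-\hat\lambda(\hat t+1)}\,\d\hat t$ for every $q \in \mathbb{P}_{k_t-1}(\hat I)$, where $\hat\lambda \coloneqq \lambda(t_j - t_{j-1})/2 \in [0, \infty)$; both $\d t$ and the (rescaled) Gauss--Radau integral pick up the same factor $|I_j|/2$, so each of \eqref{eq:exp_stability_GR_p_star}--\eqref{eq:exp_stability_GR_p} is equivalent to its rescaled version on $\hat I$ uniformly in $\hat\lambda \ge 0$. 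The key technical step is then a uniform-in-$\hat\lambda$ bound on the matrix $M(\hat\lambda) \in \mathbb{R}^{(k_t+1)\times(k_t+1)}$ of $T_{\hat\lambda}$ in the Lagrange basis $\{\phi_l\}_{l=0}^{k_t}$ at the Gauss--Radau nodes $\{\hat\xi_l\}_{l=0}^{k_t}$. The entries $M_{ml}(\hat\lambda)$ are rational functions (with non-vanishing denominator) of the moments $\int_{\hat I}\hat t^k e^{-\hat\lambda(\hat t+1)}\,\d\hat t$ in $\hat\lambda$, so $\hat\lambda \mapsto M(\hat\lambda)$ is continuous on $[0, \infty)$; as $\hat\lambda \to \infty$ the weight vanishes in $L^1(\hat I)$ and $T_{\hat\lambda} \to T_\infty$ with $T_\infty \hat r = \hat r(-1)\,L_{k_t}/L_{k_t}(-1)$, where $L_{k_t}$ is the $k_t$-th Legendre polynomial. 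Continuity on the compactified interval $[0, \infty]$ yields $\max_{l,m}\sup_{\hat\lambda \ge 0}|M_{ml}(\hat\lambda)| \le C(k_t)$.

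With this matrix bound in hand, \eqref{eq:exp_stability_GR_p_star} and \eqref{eq:exp_stability_GR_p} are immediate. The Lagrange property $\phi_l(\hat\xi_m) = \delta_{lm}$ gives $\overline{\bv}_{h,\tau}(\hat\xi_m) = \sum_l M_{ml}(\hat\lambda)\,\bv_h^l$ whenever $\bv_{h,\tau} = \sum_l \phi_l\,\bv_h^l$, so the triangle inequality (valid for either $\|\cdot\|_\star$ or $\|\cdot\|_{h,s}$), combined with the discrete Jensen inequality $(\sum_l|a_l|)^s \le (k_t+1)^{s-1}\sum_l|a_l|^s$, the uniform bound on $M$, and the positive lower bound on the Gauss--Radau weights, yields
\begin{equation*}
\sum_m \hat\omega_m\,\|\overline{\bv}_{h,\tau}(\hat\xi_m)\|^s
\;\lesssim_{s,k_t}\;
\sum_l \|\bv_h^l\|^s
\;\lesssim_{k_t}\;
\sum_l \hat\omega_l\,\|\bv_h^l\|^s\,,
\end{equation*}
which, undoing the rescaling, is exactly the desired inequality.

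The remaining estimate \eqref{eq:exp_stability_p} needs an extra step because the plain measure $\d t$ is not captured by point-evaluation at Gauss--Radau nodes. By Fubini the claim reduces, pointwise in $x \in \Omega$ (and on each face), to an estimate of the form
\begin{equation*}
\int_{\hat I}|T_{\hat\lambda}\bw_\tau(\hat t)|^s\,\d\hat t
\;\le\; C(s,k_t)\int_{\hat I}|\bw_\tau(\hat t)|^s\,\d\hat t
\end{equation*}
for every $\bw_\tau \in \mathbb{P}_{k_t}(\hat I;V)$ with $V$ a Euclidean space (the two relevant applications being to slices of $\nabla_h\bv_{h,\tau}$ and of $\jump{\bv_{h,\tau}\otimes\bn}$). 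The crucial observation is the rank bound $\dim\operatorname{span}\bw_\tau(\hat I) \le k_t+1$, which lets me replace $V$ by its intersection with that span and identify it isometrically with $\mathbb{R}^{k_t+1}$; the inequality then sits on the \emph{fixed} finite-dimensional space $\mathbb{P}_{k_t}(\hat I;\mathbb{R}^{k_t+1})$, on which the $L^s(\hat I;\mathbb{R}^{k_t+1})$ and $L^2(\hat I;\mathbb{R}^{k_t+1})$ norms are equivalent with constants depending only on $s$ and $k_t$. This collapses the problem to the $s = 2$ case, which follows componentwise from the scalar $L^2$-stability result \cite[Lm.~3.6]{CW.2010b} (equivalently, from the uniform bound on $M(\hat\lambda)$ applied in an orthonormal basis of $V$). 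The main obstacle is therefore the passage from $s = 2$ to general $s \in (1,\infty)$ uniformly in $\dim \Vh$, and the rank bound is the ingredient that makes this finite-dimensional reduction go through.
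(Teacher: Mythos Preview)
Your argument is correct and takes a genuinely different route from the paper's proof in Appendix~\ref{appendix:stability}. The paper reduces each of the three estimates to the Hilbertian case $s=2$ (known from \cite{CW.2010b}) by means of inverse-type inequalities: for \eqref{eq:exp_stability_GR_p_star} it uses a discrete H\"older/reverse-H\"older inequality \eqref{eq:inverse_ineq_GR} in the quadrature measure; for \eqref{eq:exp_stability_p} and \eqref{eq:exp_stability_GR_p} it localises in space, passes to the time-sup norm via \eqref{eq:norm_equivalence}, switches $L^s(K)\leftrightarrow L^2(K)$ on each element and facet via spatial inverse estimates, invokes the $L^\infty(I_j;L^2)$-stability from \cite{CW.2010b}, and then undoes all of this. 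Your approach is more structural: you isolate the single free parameter $\hat\lambda$ on the reference interval, establish a uniform bound on the matrix $M(\hat\lambda)$ by continuity on the compactification $[0,\infty]$ (the limit $T_\infty r = r(-1)\,L_{k_t}/L_{k_t}(-1)$ is correct since $L_{k_t}(-1)=(-1)^{k_t}\neq 0$), and then \eqref{eq:exp_stability_GR_p_star} and \eqref{eq:exp_stability_GR_p} follow in one stroke for \emph{any} norm on $\Vh$, not just inner-product ones. For \eqref{eq:exp_stability_p} your Fubini reduction---using that $\BD_h$ and $\jump{\cdot\otimes\bn}$ commute with the time interpolant---cleanly replaces the paper's element-by-element bookkeeping. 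Two small remarks: the rank bound is not strictly needed here, since after Fubini the target space is already $\mathbb{R}^{d\times d}$ of fixed dimension (though your argument is a nice way to see that the $L^s$--$L^2$ equivalence constant is independent of $\dim V$ in general); and the compactness argument, while elegant, is non-constructive, whereas the paper's chain of inverse inequalities yields an explicit---if unwieldy---constant.
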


We are now in a position to prove the sought stability result. We stress now that the restriction on the power-law index $p$ in the stability result arises because the argument requires admissibility in the convective term, but it is conceivable that different arguments could deliver a similar result for the natural range $p>\tfrac{2d}{d+2}$.

\begin{theorem}\label{thm:stability}
Suppose that $(\bu_{h,\tau}, p_{h,\tau})^\top\in \Vht\times \Mht$ is a solution of problem \eqref{eq:discrete_PDE}. Moreover, assume that $p\geq \frac{3d+2}{d+2}$ if $k_t >0$ and $p>1$ if $k_t=0$. Then, assuming that  $\alpha>0$ is large enough, there exists a constant $c>0$ (independent of $h,\tau>0$) such that
\begin{align}\label{eq:Linfty_stability}
\|\bu_{h,\tau}\|_{\Linfty} \leq c\,.
\end{align}
\end{theorem}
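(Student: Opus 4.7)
The plan is to extend the heat-equation argument of Proposition \ref{thm:stability_heat} to the non-Newtonian, quadrature-based setting. The case $k_t=0$ follows directly from Lemma \ref{lem:apriori} since $\bu_{h,\tau}$ is then piecewise constant in time, so I focus on $k_t\geq 1$, fix $j\in \{1,\ldots,N_\tau\}$, and abbreviate $L_j\coloneqq\|\bu_{h,\tau}\|_{L^\infty(I_j;L^2(\Omega)^d)}$. Testing \eqref{eq:discrete_momentum} restricted to $I_j$ with $\bv_{h,\tau}\coloneqq \overline{\bu}_{h,\tau}\chi_{I_j}\in \Vht$, the defining properties \eqref{eq:exponential_interpolant_properties} of the exponential interpolant (mirroring the derivation of \eqref{eq:Linfty_stability_heat.1}) isolate the good term $\tfrac{\lambda}{2}e^{-\lambda(t_j-t_{j-1})}\int_{I_j}\|\bu_{h,\tau}(t)\|^2_{L^2(\Omega)}\,\d t$ on the left-hand side, while the partition-point contribution $\|\bu_{h,\tau}(t_{j-1})\|^2_{L^2(\Omega)}$ is controlled by Lemma \ref{lem:apriori}.

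The viscous, pressure-velocity coupling, and body-force contributions on the right-hand side are bounded by Hölder's inequality in $L^{p'}(I_j,\GRdt)\times L^{p}(I_j,\GRdt)$, together with the stability of the exponential interpolant \eqref{eq:exp_stability_p}--\eqref{eq:exp_stability_GR_p} in the $\|\cdot\|_{h,p}$-norm and the a priori bounds from Lemma \ref{lem:apriori}. The crucial step is the convective term: applying Hölder in space with exponents $(2p',2p',p)$ to both halves of the skew-symmetric form, together with the discrete gradient stability \eqref{eq:discrete_gradient_stability}, yields
\begin{align*}
    \bigg|\int_{I_j}\mathcal{C}_h[\bu_{h,\tau},\bu_{h,\tau},\overline{\bu}_{h,\tau}]\,\GRdt\bigg|
    \lesssim
    \bigl(\|\bu_{h,\tau}\|_{L^{2p'}(Q_j)}+\|\overline{\bu}_{h,\tau}\|_{L^{2p'}(Q_j)}\bigr)\|\bu_{h,\tau}\|_{L^{2p'}(Q_j)}\|\overline{\bu}_{h,\tau}\|_{L^p(I_j;\|\cdot\|_{h,p})}.
\end{align*}

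The hypothesis $p\geq (3d+2)/(d+2)$ is equivalent to $2p'\leq p_{*}=p(d+2)/d$, so Hölder in $Q_j$ followed by the discrete parabolic interpolation inequality (Corollary \ref{cor:parabolic_interpolation}) yields $\|\bu_{h,\tau}\|_{L^{2p'}(Q_j)}\lesssim L_j^{2/(d+2)}$, with an analogous bound for $\overline{\bu}_{h,\tau}$ obtained by applying Lemma \ref{lem:exp_stability} to the $\|\cdot\|_{h,p}$-factor and the $L^\infty(I_j;L^2)$-stability of the interpolant (valid since $\|\cdot\|_{L^2(\Omega)}$ is Hilbertian, cf.\ \eqref{eq:exp_stability_infty_star}) to the $L^2$-factor. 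The convective term is thus controlled by $CL_j^{4/(d+2)}$. Combining with the inverse-in-time inequality \eqref{eq:inverse_ineq_time} and choosing $\lambda=\tau^{-1}$, as in the derivation of \eqref{eq:almost_stability_heat}, converts the left-hand side into a constant multiple of $L_j^2$, producing the closing inequality $cL_j^2\leq C+CL_j^{4/(d+2)}$. Since $4/(d+2)<2$ for $d\geq 2$, Young's inequality absorbs the sub-quadratic term into $L_j^2$ and delivers a uniform bound on $L_j$ independent of $j$, $h$, and $\tau$.

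I expect the main obstacle to be the convective term: beyond forcing the sharp admissibility restriction $p\geq (3d+2)/(d+2)$ (precisely the threshold $2p'\leq p_*$), it requires invoking the discrete parabolic Gagliardo--Nirenberg interpolation, carefully reconciling the quadrature measure $\GRdt$ (in which the discrete convective term is written) with the Lebesgue measure $\d t$ (in which Corollary \ref{cor:parabolic_interpolation} is formulated) via norm equivalence on the finite-dimensional temporal polynomial space, and exploiting the non-Hilbertian stability of the exponential interpolant supplied by Lemma \ref{lem:exp_stability}.
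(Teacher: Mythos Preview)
Your proposal is correct and follows essentially the same route as the paper: reduce to the heat-equation argument of Proposition~\ref{thm:stability_heat}, isolate the convective term as the only new obstruction, apply H\"older with exponents $(2p',2p',p)$, invoke the discrete parabolic interpolation inequality using $2p'\leq p_*$, and close with the sub-quadratic exponent $4/(d+2)<2$. Two minor remarks: (i) in your displayed convective estimate the gradient factor in the second half of the skew-symmetric form should carry $\|\bu_{h,\tau}\|_{h,p}$ rather than $\|\overline{\bu}_{h,\tau}\|_{h,p}$, though this is harmless since both are controlled; (ii) where you propose to reconcile $\GRdt$ with $\d t$ via norm equivalence on $\P_{k_t}(I_j)$, the paper instead observes directly that Corollary~\ref{cor:parabolic_interpolation} remains valid with the discrete measure $\GRdt$ in place of $\d t$ (this is immediate since Lemma~\ref{lem:gagliardo} is applied pointwise in $t$ and then summed against the quadrature weights), which is slightly cleaner but equivalent in effect.
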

\begin{proof}
    Having the stability properties from Lemma \ref{lem:exp_stability} at hand, the proof is almost the same as that of Proposition \ref{thm:stability_heat}. The only essential difference arises from the convective term. Noting first that $p\geq \frac{3d+2}{d+2}$ is equivalent to $2p' \leq p_*$, for every $j\in \{1,\dots,N_\tau\}$, we see that 
\begingroup
\allowdisplaybreaks
\begin{align*}
&    \left| \int_{I_j} \mathcal{C}_h[\bu_{h,\tau},\bu_{h,\tau},\overline{\bu}_{h,\tau}] \,\GRdt\right|
\\&\leq \int_{Q_j}{|\bu_{h,\tau}|^2 |\dgrad^{2k_{\bu}}(\overline{\bu}_{h,\tau})|\,\GRdt\d x}
+ \int_{Q_j} {|\overline{\bu}_{h,\tau}| |\bu_{h,\tau}| |\dgrad^{2k_{\bu}}(\bu_{h,\tau})|\,\GRdt\d x }\\
&\leq
\bigg(\int_{I_j} \|\bu_{h,\tau}(t)\|_{L^{2p'}(\Omega)}^{2p'}\,\GRdt \bigg)^{\smash{1/p'}}
\bigg(\int_{I_j} \|\overline{\bu}_{h,\tau}(t)\|_{h,p}^{p}\,\GRdt \bigg)^{\smash{1/p}} \\
&\quad +
\bigg(\int_{I_j} \|\overline{\bu}_{h,\tau}(t)\|_{L^{2p'}(\Omega)}^{2p'}\,\GRdt \bigg)^{\smash{1/(2p')}}
\bigg(\int_{I_j} \|\bu_{h,\tau}(t)\|_{L^{2p'(\Omega)}}^{2p'}\,\GRdt \bigg)^{\smash{1/(2p')}} \\
&\qquad\times\bigg(\int_{I_j} \|\bu_{h,\tau}(t)\|_{h,p}^{p}\,\GRdt \bigg)^{\smash{1/p}} \\
&\lesssim
\bigg(\int_{I_j} \|\bu_{h,\tau}(t)\|_{L^{p_*}(\Omega)}^{p_*}\,\GRdt \bigg)^{\smash{2/p_*}}
\bigg(\int_{I_j} \|\overline{\bu}_{h,\tau}(t)\|_{h,p}^{p}\,\GRdt \bigg)^{\smash{1/p}} \\
&\quad +
\bigg(\int_{I_j} \|\bu_{h,\tau}(t)\|_{L^{p_*}(\Omega)}^{p_*}\,\GRdt \bigg)^{\smash{1/p_*}}
\bigg(\int_{I_j} \|\overline{\bu}_{h,\tau}(t)\|_{L^{p_*}(\Omega)}^{p_*}\,\GRdt \bigg)^{\smash{1/p_*}} \\
&\qquad\times \bigg(\int_{I_j} \|\bu_{h,\tau}(t)\|_{h,p}^{p}\,\GRdt \bigg)^{\smash{1/p}}\,. 
\end{align*} \endgroup
Note that in the above estimate, we do not need to use the second equality from \eqref{eq:convective_term_alternative} (i.e., the explicit value $2k_{\bu}$ in the discrete gradient is not needed). The important property is \eqref{eq:discrete_gradient_stability}, which holds for $\mathcal{G}_h^{l}(\bu_{h,\tau})$ with any value of $l$.

Now, the crucial observation is that Corollary \ref{cor:parabolic_interpolation} still holds when using the discrete measure $\GRdt$. More precisely,  for every $\bv_{h,\tau} \in \P_{k_t}(I_j; \Vh)$ and $j\in \{1,\dots,N_\tau\}$, we have that
\begin{align}\label{thm:stability.2}
	\begin{aligned}
    &\bigg(\int_{I_j} \|\bv_{h,\tau}(t)\|_{L^{p_*}(\Omega)}^{p_*} \,\GRdt \bigg)^{\smash{1/p_*}}
	\\&\qquad \lesssim \bigg( \int_{I_j} \|\bv_{h,\tau}(t)\|^p_{h,p}\, \GRdt \bigg)^{\smash{1/p_*}}
	\norm{\bv_{h,\tau}}_{L^\infty(I_j;L^2(\Omega)^d)}^{\frac{2}{d+2}}\,.
	\end{aligned}
\end{align}
Combining \eqref{thm:stability.2} with the stability estimate \eqref{eq:exp_stability_infty_star} (with $\norm{\cdot}_\star\! =\! \norm{\cdot}_{L^2(\Omega)}$) and~estimate~\eqref{eq:exp_stability_GR_p}~(with~${s\!=\!p}$), then, yields that
\begin{align}\label{thm:stability.3}
        \left| \int_{I_j} \mathcal{C}_h[\bu_{h,\tau},\bu_{h,\tau},\overline{\bu}_{h,\tau}] \GRdt\right|
\lesssim \|\smash{\bu_{h,\tau}}\|^{\frac{4}{d+2}}_{L^\infty(I_j;L^2(\Omega)^d)}.
\end{align}
On grounds of \eqref{thm:stability.3}, following the same steps as in the proof of Proposition \ref{thm:stability_heat}, for every $j\in \{1,\dots,N_\tau\}$, we arrive at
\begin{align}\label{eq:almost_stability}
 \frac{\lambda}{2}e^{-\lambda(t_j-t_{j-1})} \int_{I_j} \|\bu_{h,\tau}(t)\|^2_{L^2(\Omega)}  \,\d t
 \lesssim 
 1 +
\norm{\smash{\bu_{h,\tau}}}^{\frac{4}{d+2}}_{L^\infty(I_j;L^2(\Omega)^d)}\,.
\end{align}
The argument can be finished in the same way with the inverse estimate \eqref{eq:inverse_ineq_time}, noting that $\frac{4}{d+2}<2$.
\end{proof}
}

\begin{corollary}\label{cor:Linfty_stability2}
Let $(\bu_{h,\tau}, p_{h,\tau})^\top\in \Vht\times \Mht$ be a solution of the discrete problem without quadrature. Moreover, assume that $p\geq \frac{3d+2}{d+2}$ if $k_t >0$ and $p>1$ if $k_t=0$. Then, assuming that  $\alpha>0$ is large enough, there exists a constant $c>0$ (independent of $h,\tau>0$) such that
\begin{align}\label{eq:Linfty_stability2}
\|\bu_{h,\tau}\|_{\Linfty} \leq c\,.
\end{align}
\end{corollary}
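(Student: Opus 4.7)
The strategy is to run the proof of Theorem \ref{thm:stability} essentially verbatim, with the Lebesgue measure $\d t$ replacing the discrete measure $\GRdt$ throughout; as the remark preceding the statement indicates, this yields a slightly simpler rather than harder argument. As a starting point, the a priori bounds from Lemma \ref{lem:apriori} remain valid with $\d t$ in place of $\GRdt$ (this being explicitly noted in that lemma), so one has at hand control of $\max_{j} \|\bu_{h,\tau}(t_j)\|_{L^2(\Omega)}$, $\int_I \|\bu_{h,\tau}\|^p_{h,p}\,\d t$, and $\int_I \|\hat{\BS}_{h,\tau}\|^{p'}_{L^{p'}(\Omega)}\,\d t$.

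Fixing $j\in \{1,\ldots,N_\tau\}$, I would test the momentum equation with $\bv_{h,\tau}\coloneqq \overline{\bu}_{h,\tau}\chi_{I_j}$, where $\overline{\bu}_{h,\tau}\in \P_{k_t}(I_j;\Vh)$ is the exponential interpolant \eqref{eq:exponential_interpolant} with parameter $\lambda>0$. Since $\partial_t\bu_{h,\tau}\in \P_{k_t-1}(I_j;\Vh)$, the reproduction property \eqref{eq:exponential_interpolant_properties} produces the same time-derivative identity as in \eqref{eq:Linfty_stability_heat.1}, with the exponential weight $e^{-\lambda(t-t_{j-1})}$ appearing on the key $\int_{I_j}\|\bu_{h,\tau}(t)\|^2_{L^2(\Omega)}\,\d t$ term. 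The remaining terms are now ordinary Lebesgue integrals, so the classical $L^p(I_j;\|\cdot\|_{h,p})$-stability \eqref{eq:exp_stability_p} of the exponential interpolant applies directly to bound the viscous, pressure, and forcing contributions via Hölder's inequality and the a priori estimates. For the convective term, the same chain of Hölder estimates as in Theorem \ref{thm:stability} goes through with $\d t$ in place of $\GRdt$, but now invoking Corollary \ref{cor:parabolic_interpolation} directly (rather than a quadrature analogue); under the hypothesis $p\geq \tfrac{3d+2}{d+2}$, equivalently $2p'\leq p_*$, this yields
\begin{align*}
\left|\int_{I_j}\mathcal{C}_h[\bu_{h,\tau},\bu_{h,\tau},\overline{\bu}_{h,\tau}]\,\d t\right|
\lesssim \|\bu_{h,\tau}\|^{\smash{4/(d+2)}}_{L^\infty(I_j;L^2(\Omega)^d)}\,.
\end{align*}
Combining the resulting estimate with the inverse-in-time estimate \eqref{eq:inverse_ineq_time} and the choice $\lambda=\tau^{-1}$, the exponent $\tfrac{4}{d+2}<2$ permits absorption of the $L^\infty$-norm into the left-hand side via Young's inequality. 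The case $k_t=0$ follows immediately from the a priori estimate, as the discrete velocity is then piecewise constant in time.

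The main ``obstacle'' is really its absence: because all the integrals are now standard, neither the quadrature-based stability properties \eqref{eq:exp_stability_GR_p_star} and \eqref{eq:exp_stability_GR_p} of Lemma \ref{lem:exp_stability} nor a quadrature version of Corollary \ref{cor:parabolic_interpolation} are needed, which is precisely why this result is stated as a corollary rather than a separate theorem. The one point requiring minor attention is to confirm that the identity \eqref{eq:Linfty_stability_heat.1} transfers unchanged even for $p\neq 2$; but this is immediate because that identity depends only on the definition \eqref{eq:exponential_interpolant_properties} of the exponential interpolant and on integration-by-parts in time, neither of which involves the (possibly non-Hilbertian) spatial structure.
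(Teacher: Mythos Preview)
Your proposal is correct and follows essentially the same approach as the paper's own proof: replace $\GRdt$ by $\d t$ throughout the argument of Theorem~\ref{thm:stability}, invoke Corollary~\ref{cor:parabolic_interpolation} directly for the parabolic interpolation, and use the standard-measure stability estimate \eqref{eq:exp_stability_p} of the exponential interpolant in place of its quadrature analogues. The paper's proof makes exactly these two observations and nothing more.
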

\begin{proof}
The proof for the dG time discretisation without quadrature is almost identical.~The~only~difference is that Corollary \ref{cor:parabolic_interpolation} can be applied directly, and that now the stability estimate \eqref{eq:exp_stability_p} with the standard measure $\d t$ is the one that has to be employed.
\end{proof}

{\color{black}
    \section{Convergence of DG schemes for non-Newtonian fluids}\label{sec:convergence}

\hspace{5mm}Having Corollary \ref{cor:Linfty_stability2} at hand, we are now in the position to prove the (weak) convergence of the discrete scheme \eqref{eq:discrete_PDE}. Before we do so, however, let us first derive two basic technical lemmas which will be useful in the sequel:
the first lemma is a weak-* compactness result for sequences of in-time element-wise continuous functions that are bounded in the $L^p$-$L^p$-norm with respect to the discrete~measure~$\GRdt$ in space.

\begin{lemma}[Weak-* convergence of quadrature integrals]\label{lem:weak-star}
	Let $p\in (1,\infty)$ and  $\BT_{\tau_n}\in C^0(\mathcal{I}_{\tau_n};L^p(\Omega))$,~${n\in \mathbb{N}}$,
	a sequence such that
	\begin{align}
		C_{\BT}\coloneqq \sup_{n\in \mathbb{N}}{\bigg(\int_I{\| \BT_{\tau_n}(t)\|_{L^p(\Omega)}^p\,\GRdt}\bigg)^{1/p}}<\infty\,.\label{lem:weak-star.1}
	\end{align} 
	Then, there exists a subsequence $(n_k)_{k\in \mathbb{N}}\subseteq \mathbb{N}$ and a weak limit $\BT\in L^p(I;L^p(\Omega))$ such that for every $\BQ\in C^0(\overline{I};L^{p'}(\Omega))$, it holds that
	\begin{align*}
		\int_I{\int_\Omega{\BT_{\tau_{n_k}}:\BQ\,\mathrm{d}x}\,\GRdt}\to \int_I{\int_\Omega{\BT:\BQ\,\mathrm{d}x}\,\mathrm{d}t}\quad (k\to \infty)\,.
	\end{align*}
\end{lemma}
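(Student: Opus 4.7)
The obstacle is that the functional $\BQ \mapsto \int_I \int_\Omega \BT_{\tau_n} \fp \BQ \, \mathrm{d}x \, \GRdt$ is not a genuine Lebesgue integral on $I$, so Banach--Alaoglu cannot be directly applied to $\BT_{\tau_n}$ in $L^p(I;L^p(\Omega))$. My strategy is to repackage the quadrature data into a genuine element of $L^p(I;L^p(\Omega))$, apply standard weak-$*$ compactness, and then recover the discrete integral via a strong approximation of the test function. For this to be effective, we work under the (implicit) assumption that $\tau_n \to 0$.

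Since the Gauss--Radau weights satisfy $\omega_l^j > 0$ and $\sum_{l=1}^{k_t+1} \omega_l^j = |I_j|$, one can partition each $I_j$ into consecutive disjoint subintervals $\{I_j^l\}_{l=1}^{k_t+1}$ with $|I_j^l| = \omega_l^j$ and with $|t - \xi_l^j| \leq \tau_n$ for all $t \in I_j^l$. Define the piecewise-constant-in-time representative $\BT_n^\sharp \in L^p(I; L^p(\Omega))$ by $\BT_n^\sharp(t,\cdot) \coloneqq \BT_{\tau_n}(\xi_l^j,\cdot)$ for $t \in I_j^l$. A direct computation yields
\begin{align*}
\|\BT_n^\sharp\|_{L^p(I;L^p(\Omega))}^p
= \sum_{j,l} \omega_l^j \|\BT_{\tau_n}(\xi_l^j)\|_{L^p(\Omega)}^p
= \int_I \|\BT_{\tau_n}(t)\|_{L^p(\Omega)}^p \, \GRdt \leq C_{\BT}^p.
\end{align*}
By reflexivity of $L^p(I;L^p(\Omega))$ and the Banach--Alaoglu theorem, there exists a subsequence (not relabeled) and a limit $\BT \in L^p(I;L^p(\Omega))$ such that $\BT_n^\sharp \wconv \BT$ in $L^p(I;L^p(\Omega))$.

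To transfer this to the quadrature-based integral, define analogously $\BQ_n^\sharp(t,\cdot) \coloneqq \BQ(\xi_l^j,\cdot)$ for $t \in I_j^l$. Then, by construction, we obtain the identity
\begin{align*}
\int_I \int_\Omega \BT_{\tau_n} \fp \BQ \, \mathrm{d}x \, \GRdt
= \int_I \int_\Omega \BT_n^\sharp \fp \BQ \, \mathrm{d}x \, \mathrm{d}t
+ \int_I \int_\Omega \BT_n^\sharp \fp (\BQ_n^\sharp - \BQ) \, \mathrm{d}x \, \mathrm{d}t.
\end{align*}
Since $\BQ \in C^0(\overline{I};L^{p'}(\Omega)) \subseteq L^{p'}(I;L^{p'}(\Omega))$, the first term on the right-hand side converges to $\int_I \int_\Omega \BT \fp \BQ \, \mathrm{d}x \, \mathrm{d}t$ by the established weak convergence; the second is bounded by $C_{\BT}\|\BQ_n^\sharp - \BQ\|_{L^{p'}(I;L^{p'}(\Omega))}$ by H\"older's inequality.

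The delicate point is controlling this remainder. Since $\overline{I}$ is compact, $\BQ \in C^0(\overline{I};L^{p'}(\Omega))$ is uniformly continuous, and combined with $|t - \xi_l^j| \leq \tau_n \to 0$ for $t \in I_j^l$ this yields $\|\BQ_n^\sharp - \BQ\|_{L^\infty(I;L^{p'}(\Omega))} \to 0$, which dominates the $L^{p'}(I;L^{p'}(\Omega))$-norm up to the factor $|I|^{1/p'}$. This shows the remainder vanishes and concludes the claim.
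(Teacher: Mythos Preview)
Your proof is correct and takes a genuinely different route from the paper. The paper interprets the sequence of discrete measures $\BT_{\tau_n}\,\GRdt$ as a bounded sequence in the dual of $C^0(\overline{I};L^{p'}(\Omega))$, i.e., as vector-valued Radon measures, extracts a weak-$*$ limit $\boldsymbol{\mu}$ by Banach--Alaoglu, and then shows a posteriori that $\langle\boldsymbol{\mu},\BQ\rangle \lesssim C_{\BT}\|\BQ\|_{L^{p'}(I;L^{p'}(\Omega))}$ (using that the quadrature of the continuous function $t\mapsto\|\BQ(t)\|_{L^{p'}(\Omega)}^{p'}$ converges to its Lebesgue integral), whence $\boldsymbol{\mu}$ has an $L^p$-density $\BT$. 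You instead build a piecewise-constant-in-time surrogate $\BT_n^\sharp$ whose genuine $L^p$-norm equals the discrete one, extract a weak limit directly in $L^p(I;L^p(\Omega))$ by reflexivity, and then recover the quadrature pairing via the strong convergence $\BQ_n^\sharp\to\BQ$ in $L^{p'}$ from uniform continuity. Your approach is more elementary and constructive---it bypasses the Radon-measure machinery and makes the regularity $\BT\in L^p(I;L^p(\Omega))$ immediate---while the paper's approach is more abstract and perhaps more easily reusable in settings where the surrogate construction is less obvious. Both arguments rely on $\tau_n\to 0$, which you correctly flag as an implicit hypothesis; the paper uses it in the same place (convergence of the quadrature of a fixed continuous integrand).
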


\begin{proof}

	For every  $\BQ\in C^0(\overline{I};L^{p'}(\Omega))$, by Hölder's inequality and \eqref{lem:weak-star.1}, it holds that
	\begin{align}\label{lem:weak-star.1}
		\begin{aligned}
			\int_I{\int_\Omega{\BT_{\tau_n}:\BQ\,\mathrm{d}x}\,\GRdt}&\leq \bigg(\int_I{\| \BT_n(t)\|_{L^p(\Omega)}^p\,\GRdt}\bigg)^{1/p}\bigg(\int_I{\| \BQ(t)\|_{L^{p'}(\Omega)}^{p'}\,\GRdt}\bigg)^{1/p}\\
			&\lesssim C_{\BT}\,\|\BQ\|_{C^0(\overline{I};L^{p'}(\Omega))}\,,
		\end{aligned}
	\end{align}
  i.e., $(\BT_{\tau_n}\,\GRdt)_{n\in \mathbb{N}}$ is a bounded sequence in the space of Radon measures~$\mathcal{M}(I;L^p(\Omega))$.~Therefore, since $C^0(\overline{I};L^{p'}(\Omega))$ is separable, the Banach--Alaoglu theorem yields a subsequence $(n_k)_{k\in \mathbb{N}}\subseteq \mathbb{N}$ and a weak limit $\boldsymbol{\mu}\in \mathcal{M}(I;L^p(\Omega))$ such that for every $\BQ\in C^0(\overline{I};L^{p'}(\Omega))$, it holds that
	\begin{align}\label{lem:weak-star.2}
		\int_I{\int_\Omega{\BT_{\tau_{n_k}}:\BQ\,\mathrm{d}x}\,\GRdt}\to
		\langle \boldsymbol{\mu},\BQ\rangle_{C^0(\overline{I};L^{p'}(\Omega))} \quad (k\to \infty)\,.
	\end{align}
	Therefore, passing for $k\to \infty$ in \eqref{lem:weak-star.1} with $n=n_k$, for every $\BQ\in C^0(\overline{I};L^{p'}(\Omega))$, we find that
	\begin{align*}
		\langle \boldsymbol{\mu},\BQ\rangle_{C^0(\overline{I};L^{p'}(\Omega))}\lesssim  C_{\BT}\,\bigg(\int_I{\| \BQ(t)\|_{L^{p'}(\Omega)}^{p'}\,\mathrm{d}t}\bigg)^{1/p}\,.
	\end{align*}
	As a consequence, we obtain a function $\BT\in L^p(I;L^p(\Omega))$, such that for every $\BQ\in C^0(\overline{I};L^{p'}(\Omega))$, it holds that
	\begin{align*}
		\langle \boldsymbol{\mu},\BQ\rangle=\int_I{\int_\Omega{\BT:\BQ\,\mathrm{d}x}\,\mathrm{d}t}\,,
	\end{align*}
	which together with \eqref{lem:weak-star.2} yields the assertion.
\end{proof}

The second lemma is a weak-* compactness result for sequences of in-time element-wise continuous functions that are bounded in the $L^p$-$W^{1,p}_0$-norm with respect to the discrete measure $\GRdt$ in space.\enlargethispage{4mm}

\begin{lemma}[Weak-* convergence of quadrature integrals]\label{lem:weak-star_DG}
	Let $p\in [2,\infty)$ and  $\bu_{\tau_n}\in C^0(\mathcal{I}_{\tau_n};W^{1,p}(\Th)^d)\cap L^\infty(I;L^2(\Omega)^d)$, $n\in \mathbb{N}$, a sequence such that
	\begin{align}\label{lem:weak-star_DG.1}
		C_{\bu}\coloneqq \sup_{n\in \mathbb{N}}{\bigg(\int_I{\| \bu_{h_n,\tau_n}(t)\|_{h_n,p}^p\,\GRdt}\bigg)^{1/p}}<\infty\,.
	\end{align} 
	and for some $\bu\in L^\infty(I;L^2(\Omega)^d)$, it holds that
	\begin{align*}
		\bu_{h_n,\tau_n}\overset{\ast}{\rightharpoondown } \bu\quad L^\infty(I;L^2(\Omega)^d)\quad (n\to \infty)\,.
	\end{align*}
	Then, it holds that $\bu\in L^p(I;W^{1,p}_0(\Omega)^d)\cap L^\infty(I;L^2(\Omega)^d)$  and there exists a subsequence $(n_k)_{k\in \mathbb{N}}\subseteq \mathbb{N}$ such that for every $\BQ\in C^0(\overline{I};L^{p'}(\Omega)^{d\times d})$, it holds that
	\begin{align*}
\int_I{\int_\Omega{\mathcal{G}_{h_{n_k}}\bu_{\tau_{n_k}}:\BQ\,\mathrm{d}x}\,\GRdt}\to \int_I{\int_\Omega{\nabla\bu:\BQ\,\mathrm{d}x}\,\mathrm{d}t}\quad (k\to \infty)\,.
	\end{align*}
\end{lemma}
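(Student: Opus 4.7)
The plan is to invoke Lemma \ref{lem:weak-star} (applied to $\mathcal{G}_{h_n}(\bu_{\tau_n})$ in place of $\BT_{\tau_n}$) to produce a weak limit $\BG\in L^p(I;L^p(\Omega)^{d\times d})$, and then to identify $\BG=\nabla\bu$ in the distributional sense, after which the homogeneous boundary trace will follow from a boundary integration-by-parts. Indeed, by the discrete gradient stability \eqref{eq:discrete_gradient_stability} and the hypothesis \eqref{lem:weak-star_DG.1}, we have $\int_I \|\mathcal{G}_{h_n}(\bu_{\tau_n}(t))\|_{L^p(\Omega)}^p\,\GRdt\lesssim C_{\bu}^p$, so Lemma \ref{lem:weak-star} provides a subsequence (not relabelled) and $\BG\in L^p(I;L^p(\Omega)^{d\times d})$ with
\begin{align*}
\int_I\int_\Omega \mathcal{G}_{h_n}(\bu_{\tau_n}):\BQ\,\d x\,\GRdt\to \int_I\int_\Omega \BG:\BQ\,\d x\,\d t
\end{align*}
for every $\BQ\in C^0(\overline{I};L^{p'}(\Omega)^{d\times d})$.

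For the identification, I would test against arbitrary $\BQ\in C^\infty(\overline{I}\times\overline{\Omega})^{d\times d}$ and exploit the discrete integration-by-parts formula that follows from \eqref{eq:discrete_gradient}--\eqref{eq:lifting_jumps}: writing $\Pi^l_{h_n}$ for the element-wise $L^2$-projection onto $\mathbb{P}_l(\mathcal{T}_{h_n})^{d\times d}$, one has
\begin{align*}
\int_\Omega \mathcal{G}_{h_n}(\bu_{\tau_n}(t)):\BQ(t)\,\d x = -\int_\Omega \bu_{\tau_n}(t)\cdot \rmdiv\BQ(t)\,\d x + E_n(t),
\end{align*}
with the face remainder $E_n(t):=\int_{\Gamma_{h_n}}\jump{\bu_{\tau_n}(t)\otimes\bn}:\avg{\BQ(t)-\Pi^l_{h_n}\BQ(t)}\,\d s$. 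Hölder's inequality, the assumption \eqref{lem:weak-star_DG.1}, and the standard face approximation estimate $\|h_\Gamma^{1/p'}(\BQ(t)-\Pi^l_{h_n}\BQ(t))\|_{L^{p'}(\Gamma_{h_n})}\lesssim h_n\|\nabla\BQ(t)\|_{L^\infty(\Omega)}$ together yield $\int_I |E_n(t)|\,\GRdt\lesssim h_n\, C_{\bu}\, \|\nabla\BQ\|_{C^0(\overline{I};L^\infty(\Omega))}\to 0$.

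The hard part is the treatment of $\int_I\int_\Omega \bu_{\tau_n}\cdot \rmdiv\BQ\,\d x\,\GRdt$, because the hypothesis $\bu_{\tau_n}\wsconv \bu$ in $L^\infty(I;L^2(\Omega)^d)$ only controls $\d t$-integrals, whereas the bound on $\bu_{\tau_n}$ available from \eqref{lem:weak-star_DG.1} is with respect to the quadrature measure. To bridge this mismatch, I would set $\bphi:=\rmdiv\BQ\in C^\infty(\overline{I}\times\overline{\Omega})^d$ and introduce its element-wise-in-time $L^2$-projection $\bphi_{\tau_n}\in \mathbb{P}_{k_t}(\mathcal{I}_{\tau_n};C^\infty(\overline{\Omega}))^d$. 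Since, on each $I_j$, the map $t\mapsto \bu_{\tau_n}(t,x)\cdot \bphi_{\tau_n}(t,x)$ is a polynomial of degree at most $2k_t$, the Gauss--Radau quadrature integrates it exactly, giving
\begin{align*}
\int_I\int_\Omega \bu_{\tau_n}\cdot\bphi_{\tau_n}\,\d x\,\GRdt = \int_I\int_\Omega \bu_{\tau_n}\cdot\bphi_{\tau_n}\,\d x\,\d t.
\end{align*}
The residual $\int_I\int_\Omega \bu_{\tau_n}\cdot(\bphi-\bphi_{\tau_n})\,\d x\,\GRdt$ is controlled by $\|\bu_{\tau_n}\|_{L^\infty(I;L^2(\Omega)^d)}\, T\, \|\bphi-\bphi_{\tau_n}\|_{L^\infty(I;L^2(\Omega)^d)}\lesssim \tau_n^{k_t+1}\to 0$ (the $L^\infty$-bound on $\bu_{\tau_n}$ being a consequence of the Banach--Steinhaus theorem applied to the weak-$*$ convergent sequence). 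Combined with the strong convergence $\bphi_{\tau_n}\to\bphi$ in $L^1(I;L^2(\Omega)^d)$ and the weak-$*$ convergence of $\bu_{\tau_n}$, this yields $\int_I\int_\Omega \bu_{\tau_n}\cdot\rmdiv\BQ\,\d x\,\GRdt\to \int_I\int_\Omega \bu\cdot\rmdiv\BQ\,\d x\,\d t$.

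Collecting the three convergences gives $\int_I\int_\Omega \BG:\BQ\,\d x\,\d t= -\int_I\int_\Omega \bu\cdot \rmdiv\BQ\,\d x\,\d t$ for every $\BQ\in C^\infty(\overline{I}\times\overline{\Omega})^{d\times d}$. Restricting first to $\BQ\in C^\infty_0(I\times\Omega)^{d\times d}$ identifies $\BG=\nabla\bu$ distributionally on $Q$, so $\nabla\bu\in L^p(Q)^{d\times d}$, and hence $\bu\in L^p(I;W^{1,p}(\Omega)^d)\cap L^\infty(I;L^2(\Omega)^d)$. Returning to arbitrary $\BQ\in C^\infty(\overline{I}\times\overline{\Omega})^{d\times d}$ and applying the classical divergence theorem with the now-legitimate $\nabla\bu\in L^p(Q)^{d\times d}$, the surface contribution $\int_I\int_{\partial\Omega} \bu(t)\cdot\BQ(t)\bn\,\d s\,\d t$ must vanish, forcing the trace of $\bu(t)$ on $\partial\Omega$ to be zero for almost every $t\in I$; thus $\bu\in L^p(I;W^{1,p}_0(\Omega)^d)$, concluding the proof.
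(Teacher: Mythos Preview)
Your argument is correct and takes a genuinely different route from the paper's own proof. Both proofs begin identically, invoking Lemma~\ref{lem:weak-star} to produce the abstract limit $\BG\in L^p(Q)^{d\times d}$, but diverge in the identification step. The paper exploits the assumption $p\geq 2$ to observe that $t\mapsto\|\bu_{h_n,\tau_n}(t)\|_{h_n,2}^2$ is a polynomial of degree at most $2k_t$, so the Gauss--Radau quadrature integrates it exactly; this yields a uniform $L^2(I;W^{1,2}(\Th_n)^d)$-bound with the \emph{standard} measure $\d t$, after which an external result (\cite[Prop.~4.27]{KR.2023}) delivers $\bu\in L^2(I;W^{1,2}_0(\Omega)^d)$ together with $\mathcal{G}_{h_n}\bu_{h_n,\tau_n}\rightharpoonup\nabla\bu$ in $L^2(Q)^{d\times d}$. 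The identification $\BG=\nabla\bu$ is then obtained by pairing against the Lagrange interpolant $I_{\tau_n}(\BQ)$ at the Gauss--Radau nodes. Your approach is instead a direct discrete integration-by-parts against smooth $\BQ$ up to the boundary, controlling the face remainder by approximation of $\BQ$, and passing the volume term $\int_Q\bu_{\tau_n}\cdot\rmdiv\BQ\,\d x\,\GRdt$ through to $\d t$ by projecting $\rmdiv\BQ$ onto time-polynomials and invoking the assumed weak-$*$ convergence in $L^\infty(I;L^2(\Omega)^d)$. This is more self-contained (no appeal to~\cite{KR.2023}) and, notably, does not use $p\geq 2$ anywhere. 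On the other hand, the paper's route yields as a byproduct the weak $L^2(Q)$-convergence of $\mathcal{G}_{h_n}\bu_{h_n,\tau_n}$ with respect to the standard measure, which is reused later in the proof of Theorem~\ref{thm:convergence}. Both arguments tacitly rely on $\bu_{\tau_n}$ being piecewise polynomial of degree at most $k_t$ in time (so that products with degree-$k_t$ test functions are integrated exactly by $\GRdt$); this is not stated in the lemma but is how it is applied, and the paper's own proof uses it equally.
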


\begin{proof}
	Resorting to Lemma \ref{lem:weak-star}, we find a subsequence $(n_k)_{k\in \mathbb{N}}\subseteq \mathbb{N}$ and a weak limit  $\BG\in L^p(I,L^p(\Omega)^{d\times d})$ such that for every $\BQ\in C^0(\overline{I};L^{p'}(\Omega)^{d\times d})$, it holds that
	\begin{align*}
		\int_I{\int_\Omega{\mathcal{G}_{h_{n_k}}\bu_{h_{n_k},\tau_{n_k}}:\BQ\,\mathrm{d}x}\,\GRdt}\to \int_I{\int_\Omega{\BG:\BQ\,\mathrm{d}x}\,\mathrm{d}t}\quad (k\to \infty)\,.
	\end{align*}
	On the other hand, due to $p\in [2,\infty)$ and \eqref{lem:weak-star_DG.1}, we have that
	\begin{align}\label{lem:weak-star_DG.2}
        \begin{aligned}
		\bigg(\int_I{\|\bu_{h_n,\tau_n}(t)\|_{h_n,2}^2\,\mathrm{d}t}\bigg)^{1/2}&=\bigg(\int_I{\|\bu_{h_n,\tau_n}(t)\|_{h_n,2}^2\,\GRdt}\bigg)^{1/2}\\&\lesssim \bigg(\int_I{\|\bu_{h_n,\tau_n}(t)\|_{h_n,p}^p\,\GRdt}\bigg)^{1/p}\leq c\,.
        \end{aligned}
	\end{align} 
	Therefore, proceeding as in \cite[Prop. 4.27]{KR.2023}, we find that $\bu\in L^2(I;W^{1,2}_0(\Omega)^d)$ as well as
	\begin{align*}
		\mathcal{G}_{h_n}\bu_{h_n,\tau_n}\rightharpoonup \nabla \bu\quad\text{ in }L^2(I;L^2(\Omega)^{d\times d})\quad (n\to \infty)\,.
	\end{align*}
	Denoting, for each $n\in \mathbb{N}$, by $I_{\tau_n}\colon C^0(\mathcal{I}_{\tau_n};X)\to C^0(\overline{I};X)$, where $X$ is a Banach~space,~the~Lagrange interpolation operator associated with the Gauss--Radau nodes from $\mathcal{I}_{\tau_n}$, for every $\BQ\in C^\infty_0(Q)^{d\times d}$, we find that
	\begin{align*}\int_I{\int_\Omega{\mathcal{G}_{h_n}\bu_{h_n,\tau_n}:\BQ\,\mathrm{d}x}\,\GRdt}&=\int_I{\int_\Omega{\mathcal{G}_{h_n}\bu_{h_n,\tau_n}:I_{\tau_n}(\BQ)\,\mathrm{d}x}\,\mathrm{d}t}\\&\to \int_I{\int_\Omega{\nabla\bu:\BQ\,\mathrm{d}x}\,\mathrm{d}t}\quad (n\to \infty)\,,
	\end{align*}
	so that for every $\BQ\in C^\infty_0(Q)^{d\times d}$, we obtain
	\begin{align*}
		\int_I{\int_\Omega{\nabla\bu:\BQ\,\mathrm{d}x}\,\mathrm{d}t}=\int_I{\int_\Omega{\BG:\BQ\,\mathrm{d}x}\,\mathrm{d}t}\,.
	\end{align*}
	The fundamental theorem in the calculus of variations yields that $\nabla\bu=\BG\in L^p(I;L^p(\Omega)^{d\times d})$, so that, using Poincar\'e's inequality, we conclude that $\bu\in L^p(I;W^{1,p}_0(\Omega)^d)$.
\end{proof}

We are, eventually, in the position to prove a convergence result. For the sake of convenience, we treat the case with a constitutive relation of the type $\BS = \fS(\BD(\bu))$, but analogous arguments can be used to treat the general case.\vspace{-1mm}\enlargethispage{1mm}

\begin{theorem}\label{thm:convergence}
     Let $(\bu_{h,\tau}, p_{h,\tau})^\top\in \Vht\times \Mht$ be a solution of the discrete problem. Moreover, assume that $p\geq \frac{3d+2}{d+2}$. Then, assuming that  $\alpha>0$ is large enough, there exist null sequences $(h_n)_{n\in \mathbb{N}},(\tau_n)_{n\in \mathbb{N}}\subseteq (0,1)$ and a vector field $\bu\in L^p(I;W^{1,p}_{0,\mathrm{div}}(\Omega))\cap  L^\infty(I;L^2(\Omega)^d)$ such that
	\begin{align*}
		\begin{aligned}
		\bu_{h_n,\tau_n}&\overset{\ast}{\rightharpoondown} \bu&&\quad \text{ in }L^\infty(I;L^2(\Omega)^d)&&\quad (n\to \infty)\,,\\
			\mathcal{G}_{h_n}\bu_{h_n,\tau_n}&\rightharpoonup \nabla\bu&&\quad \text{ in }L^2(I;L^2(\Omega)^{d\times d})&&\quad (n\to \infty)\,,\\
		\bu_{h_n,\tau_n}&\to \bu&&\quad \text{ in }L^2(I;L^2(\Omega)^d)&&\quad (n\to \infty)\,,
		\end{aligned}
	\end{align*}
	Furthermore, it follows that $\bu\in W^{1,p,p'}(I;W^{1,p}_{0,\mathrm{div}}(\Omega),W^{1,p}_{0,\mathrm{div}}(\Omega)^*)\hookrightarrow C^0(\overline{I};L^2_{0,\mathrm{div}}(\Omega))$ satisfies \eqref{eq:weak_PDE}.\vspace{-1mm}
\end{theorem}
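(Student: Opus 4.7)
The plan is to proceed by a classical compactness argument organized into four steps: extraction of weakly convergent subsequences, identification of the divergence-free limit and its time regularity, upgrade to strong convergence in $L^2(Q)^d$, and finally passage to the limit in the non-linear equation via Minty's monotonicity trick.

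\textbf{Weak compactness.} From the a priori estimates in Lemma \ref{lem:apriori} together with the $\Linfty$-stability from Corollary \ref{cor:Linfty_stability2}, the sequence $\bu_{h,\tau}$ is uniformly bounded both in $\Linfty$ and in the quadrature-$L^p$-norm of $\|\cdot\|_{h,p}$. I would then invoke Lemma \ref{lem:weak-star_DG} to extract null sequences $(h_n),(\tau_n)$ and a limit $\bu\in L^p(I;W^{1,p}_0(\Omega)^d)\cap \Linfty$ such that $\bu_{h_n,\tau_n}\wsconv\bu$ in $\Linfty$ and $\mathcal{G}_{h_n}\bu_{h_n,\tau_n}$ converges to $\nabla\bu$ in the weak sense tested against $C^0(\overline{I};L^{p'}(\Omega)^{d\times d})$. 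Parallel application of Lemma \ref{lem:weak-star} yields a weak limit $\BS\in L^{p'}(Q)^{d\times d}$ for $\fS(\hat{\mathcal{G}}_{h_n,\sym}(\bu_{h_n,\tau_n}))$. The divergence-free property $\rmdiv\bu=0$ follows by testing the discrete mass balance \eqref{eq:discrete_mass} with a Gauss--Radau Lagrange interpolant of $q\in C_0^\infty(Q)$, exploiting that $S^{\pi}_h$ vanishes in the limit thanks to \eqref{eq:apriori} and that the discrete divergence \eqref{eq:discrete_divergence} converges via Lemma \ref{lem:weak-star_DG}.

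\textbf{Time regularity and strong convergence.} To embed $\bu$ into the claimed Bochner--Sobolev space, I would derive from \eqref{eq:discrete_momentum} a uniform bound on a time-regularization of the discrete time derivative in $L^{p'}(I;(W^{1,p}_{0,\rmdiv}(\Omega))^*)$, where the convective term is controlled using the parabolic embedding from Corollary \ref{cor:parabolic_interpolation} under the assumption $p\geq \frac{3d+2}{d+2}$. An Aubin--Lions-type argument then delivers strong convergence $\bu_{h_n,\tau_n}\to \bu$ in $L^2(Q)^d$, where the quasi-interpolant $Q_{h_n}\bu_{h_n,\tau_n}$ from Section \ref{sec:gagliardo} serves as an $H^1$-conforming surrogate whose approximation error is controlled via Lemma \ref{lem:scott_zhang_approx}. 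I expect this to be the main obstacle: because the a priori bounds and the formulation live with respect to the Gauss--Radau measure $\GRdt$ rather than Lebesgue measure, one must carefully compare $\int_I\cdot\,\GRdt$ and $\int_I\cdot\,\d t$ on non-polynomial quantities, using exactness of Gauss--Radau up to degree $2k_t$ and controlling the discrepancy via inverse estimates in time together with the $\Linfty$-stability. The embedding into $C^0(\overline{I};L^2_{0,\rmdiv}(\Omega))$ is then standard.

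\textbf{Passage to the limit.} With strong convergence in hand, I would pass to the limit in \eqref{eq:discrete_momentum} testing with a Gauss--Radau Lagrange interpolant of $\bv\in C_0^\infty([0,T)\times \Omega)^d$ divergence-free composed with a suitable discrete interpolation into $\Vht$. A discrete integration by parts recovers, together with the $L^2$-control on the jumps from \eqref{eq:apriori} and the prescribed initial condition, the continuous time derivative plus the initial-value contribution. The convective term converges by combining strong $L^2(Q)^d$-convergence of $\bu_{h_n,\tau_n}$ with weak convergence of $\mathcal{G}_{h_n}\bu_{h_n,\tau_n}$ through Lemma \ref{lem:weak-star_DG}. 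The identification $\BS=\fS(\BD(\bu))$ is finally obtained by Minty's monotonicity trick: testing the discrete scheme with $\bu_{h_n,\tau_n}$ itself yields a $\limsup$-inequality on $\int_Q \fS(\hat{\mathcal{G}}_{h_n,\sym}(\bu_{h_n,\tau_n}))\fp \dgrad^l(\bu_{h_n,\tau_n})\,\GRdt$, and monotonicity of $\fS$ combined with the already established strong convergence of $\bu_{h_n,\tau_n}$ and weak convergence of the gradients concludes the identification.
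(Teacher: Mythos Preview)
Your overall architecture---weak compactness via Lemmas~\ref{lem:weak-star} and~\ref{lem:weak-star_DG}, strong $L^2(Q)^d$-compactness, then Minty---matches the paper. Two points deserve comment.

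\textbf{Compactness step: a genuinely different route.} You propose an Aubin--Lions argument using $Q_{h_n}\bu_{h_n,\tau_n}$ as conforming surrogate, bounding a ``time-regularization of the discrete time derivative'' in a dual space. The paper instead verifies a Simon-type time-shift criterion (cf.\ \cite{CJL.2014,GL.2012}) directly on $\bu_{h_n,\tau_n}$ in the norm $\|\cdot\|_{(\Vhdiv)^*}$, using Walkington's discrete approximations $\tilde{p}^j(\cdot;t)$ of characteristic functions to extract from \eqref{eq:discrete_momentum} a pointwise-in-time representation \eqref{eq:discrete_characteristic} and then bounding $\int_0^{T-\delta}\|\bu_{h_n,\tau_n}(t+\delta)-\bu_{h_n,\tau_n}(t)\|_{(\Vhdiv)^*}\,\d t\lesssim\delta$. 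Your route is plausible but underspecified: since $\bu_{h_n,\tau_n}$ has jumps at the partition points and the formulation lives on $\GRdt$, it is not clear what object you bound in $L^{p'}(I;(W^{1,p}_{0,\rmdiv}(\Omega))^*)$ or how the jumps are absorbed. The time-shift approach sidesteps this entirely.

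\textbf{Quadrature versus Lebesgue in the convective term.} You correctly flag the $\GRdt$-vs-$\d t$ mismatch as the main obstacle in the compactness step, but then pass over it when writing ``the convective term converges by combining strong $L^2(Q)^d$-convergence with weak convergence of $\mathcal{G}_{h_n}\bu_{h_n,\tau_n}$.'' This is a genuine gap: strong $L^2$-convergence is with respect to $\d t$, while the term to be limited is $\int_I\mathcal{C}_{h_n}[\bu_{h_n,\tau_n},\bu_{h_n,\tau_n},\bv_{h_n,\tau_n}]\,\GRdt$, and Lemma~\ref{lem:weak-star_DG} only handles fixed continuous test functions, not products with $\bu_{h_n,\tau_n}$. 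The paper's device is to rewrite, via exactness of Gauss--Radau on $\P_{2k_t}$,
\[
\int_Q \bu_{h_n,\tau_n}\otimes\bu_{h_n,\tau_n}\fp\mathcal{G}_{h_n}^{2k_{\bu}}(\bv_{h_n,\tau_n})\,\d x\,\GRdt
=\int_Q I_{\tau_n}(\bu_{h_n,\tau_n}\otimes\bu_{h_n,\tau_n})\fp\mathcal{G}_{h_n}^{2k_{\bu}}(\bv_{h_n,\tau_n})\,\d x\,\d t,
\]
and then prove $I_{\tau_n}(\bu_{h_n,\tau_n}\otimes\bu_{h_n,\tau_n})\to\bu\otimes\bu$ in $L^1(Q)^{d\times d}$ by a density/mollification argument combined with the $L^\infty(I_j)$-stability of $I_{\tau_n}$. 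Without such a bridge your limit identification of the convective term does not go through.
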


\begin{proof}
    \textit{1. Convergences:} From Lemma \ref{lem:apriori} together with Lemma \ref{lem:weak-star} and Lemma \ref{lem:weak-star_DG} as well as Corollary \ref{cor:Linfty_stability2} together with the Banach--Alaoglu theorem as $L^\infty(I;L^2(\Omega)^d)$ has a separable pre-dual, we deduce the existence of sequences $(h_n)_{n\in \mathbb{N}},(\tau_n)_{n\in \mathbb{N}}\subseteq (0,1)$ and of a vector field $\bu\in L^p(I;W^{1,p}_{0}(\Omega)^d)\cap  L^\infty(I;L^2(\Omega)^d)$ as well as of a tensor field $\BS\in L^{p'}(I;L^{p'}(\Omega)^{d\times d})$ such that 
    \begin{align}\label{eq:velocity_convergences}
		\begin{aligned}
			\bu_{h_n,\tau_n}&\overset{\ast}{\rightharpoondown} \bu&&\quad \text{ in }L^\infty(I;L^2(\Omega)^d)&&\quad (n\to \infty)\,,\\
			\mathcal{G}_{h_n}\bu_{h_n,\tau_n}&\rightharpoonup \nabla\bu&&\quad \text{ in }L^2(I;L^2(\Omega)^{d\times d})&&\quad (n\to \infty)\,,\\
		\hat{\BS}_{h_h,\tau_h}&\rightharpoonup\BS&&\quad \text{ in }L^{p'}(I;L^{p'}(\Omega)^{d\times d})&&\quad (n\to \infty)\,.
		\end{aligned}
	\end{align}

  In order to see that, in fact, $\bu\in L^p(I;W^{1,p}_{0}(\Omega)^d)\cap  L^\infty(I;L^2(\Omega)^d)$ satisfies the divergence constraint, take $q\hspace*{-0.1em}\in\hspace*{-0.1em} C^\infty_c(Q)$ and denote its projection by $q_{h_n} \hspace*{-0.1em}\coloneqq  \hspace*{-0.1em}\Pi_{\smash{\mathbb{M}^{h_n}}} q\hspace*{-0.1em}\in\hspace*{-0.1em}\mathbb{M}^{h_n}$.
  Writing~${{(h_n)_\Gamma}^{p'-1} \hspace*{-0.1em}= \hspace*{-0.1em}{(h_n)_\Gamma}^{(p'-1)/p}{(h_n)_\Gamma}^{1/p}}$ for all $n\in \mathbb{N}$ and using Hölder's inequality, for every $n\in \mathbb{N}$, one can bound the pressure stabilisation term as\vspace{-1mm}
  \begin{align}\label{thm:convergence.1}
\int_I      \mathcal{S}_{h_n}^{\pi}(p_{h_n};q_{h_n})\, \GRdt
\leq
\left(\int_I{(h_n)_\Gamma^{p'-1} |\leftjump p_{h_n} \bn \rightjump|^{p'} \, \d s }\right)^{1/p}
\left(\int_I {(h_n)_\Gamma^{\frac{1}{p-1}} |\leftjump q_{h_n} \bn \rightjump|^{p'} \,\d s}\right)^{1/p'}.
  \end{align}
  Note that in  \eqref{thm:convergence.1} the facet size function $(h_n)_\Gamma\colon \Gamma_{h_n}\to \mathbb{R}_{>0}$ only appears with positive exponents. Therefore, using the a priori estimate \eqref{eq:apriori},
  the convergence properties \eqref{eq:velocity_convergences}, and the approximation properties of the discrete projection $\Pi_{\mathbb{M}^{h_n}}\colon L^{p'}(\Omega)\to \mathbb{M}^{h_n}$ it is possible to pass to the limit in the discrete divergence constraint \eqref{eq:discrete_mass} and conclude that for every $q\in C_0^\infty(Q)$,~it~holds~that $\int_Q \rmdiv \bu \, q\,\d t\d x = 0$.

  Now, noting that Corollary \ref{cor:parabolic_interpolation} remains valid when using the discrete measure $\GRdt$, we obtain
  \begin{align}\label{eq:bound_L2p}
      \int_I \|\bu_{h_n,\tau_n}\|^{2p'}_{L^{2p'}(\Omega)} \GRdt \leq c,
  \end{align}
  where $c>0$ is independent of $n\in \mathbb{N}$. In \eqref{eq:bound_L2p}, we used that $p\geq \smash{\frac{3d+2}{d+2}}$, which implies~that~$2p' \leq p\smash{\frac{d+2}{d}}$. Therefore, if we take a discrete approximation $\bv_{h_n,\tau_n}$ of an arbitrary $\bv\in C_c^\infty(Q)^d$, an application of Lemma \ref{lem:weak-star} then ensures the existence of functions $\BH_1 \in L^{p'}(Q)^{d\times d}$ and $\BH_2 \in L^{(2p')'}(Q)^{d\times d}$ such that
\begin{align*}
\int_I{\mathscr{C}_h[\bu_{h_n,\tau_n},\bu_{h_n,\tau_n},\bv_{h_n,\tau_n}]\, \GRdt}
    \to
    \int_Q \BH_1 \fp \nabla \bv \, \d x\, \d t
    + \int_Q \BH_2\cdot \bv \, \d x \,\d t\quad (n\to \infty)\,. 
	\end{align*}\newpage
	
	\textit{2. Compactness of the velocity vector field:} In order to prove compactness of the sequence of discrete velocity vector fields, we consider the space of discretely divergence-free velocities, for every $h>0$, defined by $$ \Vhdiv\coloneqq  \bigg\{\bw^h \in \Vh \,\Big|\, \int_\Omega{ q^h \rmdiv \bw^h\,\d x} = 0 \text{ for all }q^h \in \Mh\bigg\}\,,$$
	and equip it with either of the two norms $\smash{\|\cdot\|_{\Vhdiv}, \|\cdot\|_{(\Vhdiv)^*} \colon \Vhdiv\to \mathbb{R}_{\ge 0}}$, for every $\smash{\bv^h\in \Vhdiv}$,~defined~by
	\begin{align*}
		\|\bv^h\|_{\smash{\Vhdiv}} \coloneqq  \|\bv^h\|_{h,p} \,,
		\qquad
		\|\bv^h\|_{\smash{(\Vhdiv)^*}} \coloneqq  \smash{\sup_{\bw^h \in \Vhdiv\setminus\{0\}}}\frac{\int_\Omega \bv^h\cdot \bw^h\,\d x}{\|\bw^h\|_{\smash{\Vhdiv}}}\,.
	\end{align*}
	
	Noting that, thanks to the a priori bounds (cf.\ Lemma \ref{lem:apriori}), that $p\in [2,\infty)$, and similar to  \eqref{lem:weak-star_DG.2}, we have that $\sup_{n\in \mathbb{N}}{\|\bu_{h_n,\tau_n}\|_{L^2(0,T;W^{1,2}(\mathcal{T}_{h_n}))}}<\infty$
	and due to the spatial compactness (i.e., bounded sequences in $W^{1,2}(\mathcal{T}_{h_n})$ are pre-compact in $L^2(\Omega)^d$ (cf. \cite[Thm.\ 5.2]{BO.2009})), the only condition left to check in order to obtain compactness is that (see \cite[Prop.\ 9]{CJL.2014} and \cite[Rmk.\ 6]{GL.2012})
	\begin{align*}
		\int_0^{T-\delta}\|\bu_{h_n,\tau_n}(t+\delta) - \bu_{h_n,\tau_n}(t)\|_{(\mathbb{V}_{\textup{div}}^{h_n})^*}\,\d t \to 0\quad(\delta\to 0) 
		\quad
		\text{uniformly in }n\in \mathbb{N}\,.
	\end{align*}
	We will now argue similarly as in \cite{W.2010} and use discrete approximations of characteristic functions~in~time. Fix $j\in\{1,\ldots, N_{\tau_n}\}$ and let $\tilde{p}^j(\cdot;t)\in \mathbb{P}^{k_t}(I_j)$, for arbitrary $t\in I_j$, be such that $	\tilde{p}^j(t^+_{j-1};\cdot) = 1$~and~such~that for every $	q\in \mathbb{P}^{k_t-1}(I_j)$, it holds that\enlargethispage{5mm}
	\begin{align*}
		\int_{t_{j-1}}^{t_j} \tilde{p}^j(s;t)q(s)\,\mathrm{d}s 
		=
		\int_{t_{j-1}}^{t} q(s)\,\mathrm{d}s \,.
	\end{align*}
	An explicit construction can be found in \cite{W.2010}. Moreover, one has that
	\begin{alignat}{2}
		\|\tilde{p}^j(\cdot,t)\|_{L^\infty(I_j)} &\leq c(k_t)
	&&\quad
		\text{ for all }t\in I_j\,, \label{eq:discrete_characteristic1} \\
		\|\tilde{p}^j(\cdot, s+\delta) - \tilde{p}^j(\cdot;s)\|_{L^\infty(I_j)} 
		&\leq 
		c(k_t)\,\delta\tau^{-1}
		&&\quad
		\text{ for all }s\in I_j\text{ with }s+\delta \in I_j\,. \label{eq:discrete_characteristic2}
	\end{alignat}
	An important consequence of the definition of $\tilde{p}^j(\cdot;t)\in \mathbb{P}^{k_t}(I_j)$ is that, testing \eqref{eq:discrete_PDE}, for every $n\in \mathbb{N}$, with $\bv^{h_n,\tau_n} = \tilde{p}(\cdot;t)\bw^{h_n}\in \mathbb{V}^{h_n,\tau_n}$, where $\bw^{h_n}\in \mathbb{V}^{h_n}_{\textup{div}}$, for every $n\in \mathbb{N}$, one obtains
	\begin{align}\label{eq:discrete_characteristic}
		\int_\Omega (\bu_{h_n,\tau_n}(t) - \bu_{h_n,\tau_n}(t_{j-1}))\cdot \bw^{h_n}
		=
		\int_{t_{j-1}}^{t_j}  \langle F^{h_n}_{\bu}(s),  \bw^{h_n}\rangle_{\mathbb{V}^{h_n}_{\textup{div}}} \tilde{p}^j(s;t) \,\mu^{\mathrm{GR}}_{k_t+1}(\mathrm{d}s)\,,
	\end{align}
	where  $F^{h_n}_{\bu}(s)\in (\mathbb{V}^{h_n}_{\textup{div}})^*$ for a.e.\ $s\in I_j$, $j\in\{1,\ldots, N_{\tau_n}\}$, and $n\in \mathbb{N}$ denotes the spatial momentum residual.
	
	Now, take $\delta >0$ and suppose, without loss of generality, that $\delta < \theta\tau$. We consider first the case in which $s\in (t_{j-1}, t_j)$ and $s+\delta \in (t_{j-1},t_j]$. Then, from \eqref{eq:discrete_characteristic}, for every $n\in \mathbb{N}$, we get that
	\begin{align}
        \|\bu_{h_n,\tau_n}(s+\delta) - \bu_{h_n,\tau_n}(s)\|_{(\mathbb{V}_{\textup{div}}^{h_n})^*} &\leq  
		\left(\int_{I_j} \|F^{h_n}_{\bu}(t)\|_{( \mathbb{V}^{h_n}_{\textup{div}})^*}\,\mu^{\mathrm{GR}}_{k_t+1}(\mathrm{d}t) \right) 
		\|\tilde{p}^j(\cdot; s+ \delta) - \tilde{p}^j(\cdot;s)\|_{L^\infty(I_j)} \notag\\
		& \leq c\,
		\|\tilde{p}^j(\cdot; s+ \delta) - \tilde{p}^j(\cdot;s)\|_{L^\infty(I_j)}\,,\label{eq:discrete_characteristic.2}
	\end{align}
	where used \eqref{eq:discrete_characteristic}, the a priori estimate in Lemma \ref{lem:apriori}, and that  $p\geq \frac{3d+2}{d+2}$. 
    Integration of \eqref{eq:discrete_characteristic.2} with respect to $s\in (t_{j-1},t_j - \delta)$ for all $j\in \{1,\dots,N_{\tau_n}\}$ and 
     summation with respect to $j\in \{1,\dots,N_{\tau_n}\}$~yields~that
	\begin{align}\label{eq:shifts_bound1}
		\sum_{j=1}^{N_{\tau_n}}\int_{t_{j-1}}^{t_j - \delta} \|\bu_{h_n,\tau_n}(s+\delta) - \bu_{h_n,\tau_n}(s)\|_{( \mathbb{V}^{h_n}_{\textup{div}})^*} \,\mathrm{d}s
		\lesssim \delta\,.
	\end{align}
	
	Now, we consider the case when $s\in (t_{j-1}, t_j)$ and $s+\delta \in (t_j, t_{j+1})$. Note that $s\in (t_j-\delta, t_j)$, so, in particular, $j\leq N_{\tau_n} -1$. Then, from \eqref{eq:discrete_characteristic}, for every $n\in \mathbb{N}$ and arbitrary $\bw^{h_n}\in \mathbb{V}^{h_n}_{\textup{div}}$, we have that\enlargethispage{5mm}
	\begin{align*}
		\int_\Omega (\bu_{h_n,\tau_n}(s+\delta) - \bu_{h_n,\tau_n}(s))\cdot \bw^{h_n}\,\d x &=
		\int_{t_j}^{t_{j+1}} \langle F^{h_n}_{\bu}(\eta), \bw^{h_n} \rangle \tilde{p}^{j+1}(\eta; s+ \delta)\, \mu^{\mathrm{GR}}_{k_t+1}(\mathrm{d}\eta)
		\\&\quad+
		\int_{t_{j-1}}^{t_{j}} \langle F^{h_n}_{\bu}(\eta), \bw^{h_n} \rangle (1 - \tilde{p}^{j}(\eta; s))\, \mu^{\mathrm{GR}}_{k_t+1}(\mathrm{d}\eta) \,.
	\end{align*}
	Using now the bound \eqref{eq:discrete_characteristic1}, integration with respect to $s\in (t_j-\delta, t_j)$ and summation with respect to $j\in \{1,\ldots, N_{\tau_n} -1\}$, for every $n\in \mathbb{N}$, we obtain
	\begin{align}\label{eq:shifts_bound2}
		\sum_{j=1}^{N_{\tau_n}-1}\int_{t_{j}-\delta}^{t_j} \|\bu_{h_n,\tau_n}(s+\delta) - \bu_{h_n,\tau_n}(s)\|_{(\mathbb{V}^{h_n}_{\textup{div}})^*} \,\mathrm{d}s \lesssim \delta\,.
	\end{align}
	Combining \eqref{eq:shifts_bound1} with \eqref{eq:shifts_bound2} yields the required compactness in $L^2(I;L^2(\Omega)^d)$. In particular, we obtain
	\begin{align} \label{eq:strong_L2_convergence}
		\bu_{h_n,\tau_n}\to \bu \quad\text{ in }L^2(I;L^2(\Omega)^d)\quad (n\to \infty)\,.
	\end{align} 
	
	\textit{3. Convergence of the time derivative:}
	From integration-by-parts, for every $n\in \mathbb{N}$, it follows that
	\begin{align}\label{eq:something}
		\begin{aligned}
		\sum_{j=1}^{N_{\tau_n}} &\left[ 
		\int_{Q_j} \partial_t \bu_{h_n,\tau_n}\cdot \bv_{h_n,\tau_n} \,\d x\d t
		+ \int_\Omega \jump{\bu_{h_n,\tau_n}}_{j-1}\cdot \bv_{h_n,\tau_n}(t^+_{j-1})\,\d x
		\right]
		\\&=
		\sum_{j=1}^{N_{\tau_n}}\left[ 
		\int_{Q_j} \bu_{h_n,\tau_n}\cdot \partial_t \bv_{h_n,\tau_n} \,\d x\d t
		+ \int_\Omega \bu_{h_n,\tau_n}(t_j) \cdot \bv_{h_n,\tau_n}(t_j) - \bu_{h_n,\tau_n}(t_{j-1})\cdot \bv_{h_n,\tau_n}(t^+_{j-1}) \,\d x
		\right]\,.
	\end{aligned}
	\end{align}
	Now, choose $\bv_{h_n,\tau_n}= I_{\tau_n}(\phi)\Pi_{\mathbb{V}^{h_n}}\bv\in \mathbb{V}^{h_n,\tau_n}$, for arbitrary $\phi \in C_c^\infty([0,T))$ and $\bv\in C_c^\infty(\Omega)^d$.\enlargethispage{5mm}
Then, from \eqref{eq:something}, it follows that
	\begin{align}\label{eq:something2}
		\begin{aligned}
		\int_{Q_j}& \partial_t \bu_{h_n,\tau_n}\cdot I_{\tau_n}(\phi)\Pi_{\mathbb{V}^{h_n}}\bv \,\d x\d t
		+ \sum_{j=1}^{N_{\tau_n}}
		 \int_\Omega \jump{\bu_{h_n,\tau_n}}_{j-1}\cdot I_{\tau_n}(\phi)(t^+_{j-1}) \Pi_{\mathbb{V}^{h_n}}\bv\,\d x
		\\&=
		\int_{Q} \bu_{h_n,\tau_n}\cdot \partial_t (I_{\tau_n}(\phi)) \Pi_{\mathbb{V}^{h_n}}\bv \,\d x\d t 
		- \int_\Omega \bu_{h_n,\tau_n}(0)\cdot \phi(0) \Pi_{\mathbb{V}^{h_n}}\bv \,\d x
		 \\
		& \quad+ \sum_{j=1}^{N_{\tau_n}}
		\int_\Omega \left[ \bu_{h_n,\tau_n}(t_j) \cdot I_{\tau_n}(\phi)(t_j)\Pi_{\mathbb{V}^{h_n}}\bv - \bu_{h_n,\tau_n}(t_{j-1})\cdot I_{\tau_n}(\phi)(t^+_{j-1})\Pi_{\mathbb{V}^{h_n}}\bv 
		\right] \,\d x\,,
	\end{aligned}
	\end{align}
	where we used that $I_{\tau_n}(\phi)(0)=\phi(0)$ and $I_{\tau_n}(\phi)(T)=0$. The first two terms in \eqref{eq:something2} can be handled immediately by recalling that $\bu_{h_n,\tau_n}\to \bu$ in $L^2(I;L^2(\Omega)^d)$ $(n\to \infty)$, $\partial_t(I_{\tau_n} \phi) \to \partial_t\phi$ in $L^2(I)$ $(n\to \infty)$, $\Pi_{\mathbb{V}^{h_n}}\bv\to \bv$ in $L^2(\Omega)^d$ $(n\to \infty)$, and $\bu_{h_n,\tau_n}(0)\to \bu_0$ in $L^2(\Omega)^d$ $(n\to \infty)$. For the last term, adding and subtracting the term $\bu_{h_n,\tau_n}(t_{j-1})\cdot I_{\tau_n}(\phi)(t_{j-1})\Pi_{\mathbb{V}^{h_n}}\bv $, noting that $|I_{\tau_n} \phi(t_{j-1}) - I_{\tau_n}\phi(t_{j-1}^+)|$ and employing the uniform $L^\infty(0,T;L^2(\Omega)^d)$-bound, we can conclude that
	\begin{align*}
		\sum_{j=1}^{N_{\tau_n}} &\left[ 
		\int_{Q_j} \partial_t \bu_{h_n,\tau_n}\cdot \bv_{h_n,\tau_n} \,\d x\d t
		+ \int_\Omega \jump{\bu_{h_n,\tau_n}}_{j-1}\cdot \bv_{h_n,\tau_n}(t^+_{j-1})\,\d x
		\right]\\&\quad \to
		-\int_Q \bu \cdot \partial_t\phi \bv \,\d x\,\d t - \int_\Omega \bu_0 \cdot \phi(0)\bv \,\d x\quad (n\to \infty)\,.
	\end{align*} 

	\textit{4. Identifying the initial condition:}
  So far we have proved that by taking $\bv_{h_n,\tau_n} = I_{\tau_n}(\phi)\Pi_{\Vh}\bv$  in \eqref{eq:discrete_momentum}, for arbitrary $\phi \in C_c^\infty([0,T))$, $\bv\in C^\infty_{c,\rmdiv}(\Omega)^d$, and $n\in \mathbb{N}$, by passing for $n\to \infty$, we obtain
  \begin{align}\label{eq:equation}
  \begin{aligned}
-\int_Q \bu\cdot \partial_t\phi\, \bv \d x\, \d t&
+ \int_Q \BS \fp \phi \BD \bv \, \d x \, \d t
    + \int_Q \BH_1 \fp \phi \nabla \bv \, \d x\, \d t
    + \int_Q \BH_2\cdot \phi \bv \, \d x \,\d t
   \\& =\int_\Omega \bu_0 \cdot \phi(0)\bv \, \d x+ \int_Q \bm{f}\cdot \phi \bv \, \d x \,\d t\,.
   \end{aligned}
  \end{align}
  From \eqref{eq:equation}, it follows that $\partial_t \bu \in L^{q}(I;X_{\rmdiv}(\Omega)^*)$, where $q\coloneqq(\max\{2p',p\})'$ and
   $X_{\rmdiv}(\Omega)$ is the closure of $C_{c,\rmdiv}^\infty(\Omega)^d$ in   $W^{1,p}_{0,\rmdiv}(\Omega)^d \cap L^{2p'}(\Omega)^d$, i.e., with respect to the norm $\|\cdot\|_{X} \coloneqq  \|\cdot\|_{W^{1,p}(\Omega)} + \|\cdot\|_{L^{2p'}(\Omega)}$.
  Since the embeddings $X_{\rmdiv}(\Omega) \hookrightarrow L^2_{\rmdiv}(\Omega)^d \hookrightarrow X_{\rmdiv}(\Omega)$ are continuous and dense, $\partial_t \bu \in L^{q}(I;X_{\rmdiv}(\Omega)^*)$, and $\bu\in  L^\infty(I;L^2_{\rmdiv}(\Omega))$, we have that $\bu \in C_w(I;L^2_{\rmdiv}(\Omega)^d)$. Thus, standard arguments (see, e.g., \cite{ST.2019})  yield that $\bu(0) = \bu_0$ in $L^2_{\rmdiv}(\Omega)^d$ and,  in fact, 
  \begin{align}\label{eq:conv_in_0}
      \lim_{t\to 0^+} \|\bu(t) - \bu_0\|_{L^2(\Omega)} = 0\,.
  \end{align}
  In addition, standard arguments (cf. \cite{BKR.2021,KRUnsteady.2023}) show that $ \|\bu(T)\|_{L^2(\Omega)}^2\leq \liminf_{n\to\infty}{\|\bu_{h_n,\tau_n}(T)\|_{L^2(\Omega)}^2}$ or, eqivalently,
  \begin{align}\label{eq:conv_in_T}
  \limsup_{n\to\infty}{\big[-\|\bu_{h_n,\tau_n}(T)\|_{L^2(\Omega)}^2\big]}\leq -\|\bu(T)\|_{L^2(\Omega)}^2\,.
  \end{align}

	\textit{5. Identifying the convective term:}
In the following, we, once again, set $\bv_{h_n,\tau_n}= I_{\tau_n}(\phi)\Pi_{\mathbb{V}^{h_n}}\bv\in \mathbb{V}^{h_n,\tau_n}$, for arbitrary $\phi \in C_c^\infty([0,T))$, $\bv\in C_c^\infty(\Omega)^d$, and $n\in \mathbb{N}$.  Appealing to \cite[Eq.~69.11]{EG21c}, for every $n\in \mathbb{N}$, it holds that 
  \begin{align}\label{eq:ident}
      \int_Q \bu_{h_n,\tau_n}\otimes \bu_{h_n,\tau_n} \fp \mathcal{G}_{h_n}^{2k_{\bu}}(\bv_{h_n,\tau_n}) \,\d x\, \GRdt
=
\int_Q I_{\tau_n}(\bu_{h_n,\tau_n}\otimes \bu_{h_n,\tau_n}) \fp \mathcal{G}_{h_n}^{2k_{\bu}}(\bv_{h_n,\tau_n}) \,\d x  \,\d t\,,
  \end{align}
  where the interpolant $I_{\tau_n}$ is interpreted component-wise. Now let $\varepsilon>0$ be arbitrary and take a function (e.g., obtained via mollification) $\bu_\varepsilon \in C^1(\overline{I};L^2(\Omega)^d)$ such that $\|\bu_\varepsilon - \bu\|_{L^2(Q)}< \varepsilon$. In particular, point values in time of $\bu_\varepsilon$ are defined and it is meaningful to apply the Lagrange interpolation operator $I_{\tau_n}$. From the strong $L^2$-convergence \eqref{eq:strong_L2_convergence}, for every $n\in \mathbb{N}$, it follows that 
  \begin{align*}
\|I_{\tau_n}(\bu_{h_n,\tau_n}\otimes \bu_{h_n,\tau_n}) - \bu\otimes \bu\|_{L^1(Q)}
&\leq
\|I_{\tau_n}(\bu_{h_n,\tau_n}\otimes \bu_{h_n,\tau_n}-\bu_\varepsilon\otimes \bu_\varepsilon)\|_{L^1(Q)}
+ \|I_{\tau_n}(\bu_\varepsilon\otimes \bu_\varepsilon) - \bu\otimes \bu\|_{L^1(Q)} \\
&\lesssim
\|\bu_{h_n,\tau_n}\otimes \bu_{h,\tau} - \bu_\varepsilon\otimes \bu_\varepsilon\|_{L^1(Q)}
+
\|I_{\tau_n}(\bu_\varepsilon\otimes \bu_\varepsilon) - \bu\otimes \bu\|_{L^1(Q)} \\
&\leq  \|\bu_{h_n,\tau_n}\otimes \bu_{h_n,\tau_n} - \bu\otimes \bu\|_{L^1(Q)}
+
\|\bu\otimes \bu - \bu_\varepsilon\otimes \bu_\varepsilon\|_{L^1(Q)} \\
&\quad +\|I_{\tau_n}(\bu_\varepsilon\otimes \bu_\varepsilon) - \bu_\varepsilon\otimes \bu_\varepsilon\|_{L^1(Q)}
+
\|\bu_\varepsilon\otimes \bu_\varepsilon - \bu\otimes \bu\|_{L^1(Q)}
\\&\leq  \|\bu_{h_n,\tau_n}\otimes \bu_{h_n,\tau_n} - \bu\otimes \bu\|_{L^1(Q)}\\
&\quad +\|I_{\tau_n}(\bu_\varepsilon\otimes \bu_\varepsilon) - \bu_\varepsilon\otimes \bu_\varepsilon\|_{L^1(Q)}+2\varepsilon,
  \end{align*}
  where we also used the stability and approximation properties of $I_{\tau_n}$. Thus, taking the limit superior with respect to $n\to \infty$, for arbitrary $\varepsilon>0$, we observe that
  \begin{align*}
      \limsup_{n\to \infty}{\|I_{\tau_n}(\bu_{h_n,\tau_n}\otimes \bu_{h_n,\tau_n}) - \bu\otimes \bu\|_{L^1(Q)}}\leq 2\varepsilon\,.
  \end{align*}
  In other words, we have that $I_{\tau_n}(\bu_{h_n,\tau_n}\otimes \bu_{h_n,\tau_n}) \to \bu\otimes \bu$ in $L^1(Q)^{d\times d}$ $(n\to \infty)$, whence, using \eqref{eq:ident}, it follows that
 \begin{align}
     \int_Q \bu_{h_n,\tau_n}\otimes \bu_{h_n,\tau_n} \fp  \mathcal{G}_h^{2k_{\bu}}(\bv_{h_n,\tau_n}) \,\d x\, \GRdt
\to \int_Q \bu\otimes \bu \fp \phi \nabla \bv  \,\d x\, \d t\quad (n\to \infty)\,.
\end{align}
  On the other hand, from Lemma \ref{lem:weak-star_DG}, it also follows that
	\begin{align*}
			\mathcal{G}_{h_n}\bu_{h_n,\tau_n}^{k_{\bu}}\rightharpoonup \nabla\bu\quad \text{ in }L^2(I;L^2(\Omega)^{d\times d})\quad (n\to \infty)\,.
	\end{align*}
 Thus, from an analogous argument, exploiting that
		\begin{align*}
		\begin{aligned}
			\bv_{h_n,\tau_n}&\overset{\ast}{\rightharpoondown} \phi \bv&&\quad \text{ in }L^\infty(I;L^\infty(\Omega)^d)&&\quad (n\to \infty)\,,\\
			\mathcal{G}_{h_n}^{2k_{\bu}}\bv_{h_n,\tau_n}^{k_{\bu}}&\rightharpoonup\phi \nabla\bv&&\quad \text{ in }L^\infty(I;L^\infty(\Omega)^{d\times d})&&\quad (n\to \infty)\,,
		\end{aligned}
	\end{align*}
	it is possible to pass to the limit in the second term of the discrete convective term and, therefore,
	\begin{align*}
			\int_I{\mathscr{C}_{h_n}[\bu_{h_n,\tau_n},\bu_{h_n,\tau_n},\bv_{h_n,\tau_n}]\, \GRdt}
   \to \frac{1}{2}\int_Q \phi \left[ (\bv \otimes \bu)\fp \nabla \bu - (\bu\otimes \bu)\fp \nabla \bv\right] \,\d x\, \d t\quad (n\to \infty)\,.
	\end{align*}
 In fact, since $\textup{div}\, \bu=0$ a.e.\ in $Q$, integration-by-parts actually yields that
\begin{align}
    			\int_I{\mathscr{C}_{h_n}[\bu_{h_n,\tau_n},\bu_{h_n,\tau_n},\bv_{h_n,\tau_n}]\, \GRdt}
   \to -\int_Q (\bu\otimes \bu)\fp \phi \nabla \bv \,\d x\, \d t\quad (n\to \infty)\,.
	\end{align}

  \textit{6. Identifying the constitutive relation:} 
  Choosing $(\bv_{h_n,\tau_n},q_{h_n,\tau_n})^\top\coloneqq(\bu_{h_n,\tau_n},p_{h_n,\tau_n})^\top\in \mathbb{V}^{h_n,\tau_n}\times \mathbb{M}^{h_n,\tau_n}$ in \eqref{eq:discrete_PDE}, for every $n\in \mathbb{N}$,  we find that
  \begin{align}\label{eq:iden_const_rel.1}
      \begin{aligned}
\sum_{j=1}^{N_{\tau_n}}\,\frac{1}{2}\int_{I_j} \frac{\d}{\d t}\norm{\smash{\bu_{h_n,\tau_n}}}^2_{\Lp{2}}\, \d t
&+
\int_\Omega (\bu_{h_n,\tau_n}(t^+_{j-1})- \bu_{h_n,\tau_n}(t_{j-1}))\cdot \bu_{h_n,\tau_n}(t^+_{j-1})\, \d x
\\
&\quad+
\int_I \mathcal{A}_{h_n}(\bu_{h_n,\tau_n}; \bu_{h_n,\tau_n}) \,\GRdt +
\int_I S^{\pi}_{h_n}(p_{h_n,\tau_n}, p_{h_n,\tau_n})\, \GRdt
\\&=
\int_I{\int_{\Omega}{ \bm{f} \cdot \bu_{h_n,\tau_n} \,\d x}\,\GRdt}\,.
      \end{aligned}
  \end{align}
    Then, using that fact that $2a(a-b)=a^2-b^2+(a-b)^2$ for all $a,b\in \mathbb{R}$ and the classical telescope summation trick, for every $n\in \mathbb{N}$, from \eqref{eq:iden_const_rel.1}, we deduce that
     \begin{align}\label{eq:iden_const_rel.2}
      \begin{aligned}
\frac{1}{2}\norm{\smash{\bu_{h_n,\tau_n}(T)}}^2_{\Lp{2}}&-\frac{1}{2}\norm{\smash{\bu_{h_n,\tau_n}(0)}}^2_{\Lp{2}}
+\sum_{j=1}^{N_{\tau_n}}{\frac{1}{2}\norm{\jump{\smash{\bu_{h_n,\tau_n}}}_{j-1}}^2_{\Lp{2}}}\\
&\quad
+
\int_I \mathcal{A}_{h_n}(\bu_{h_n,\tau_n}; \bu_{h_n,\tau_n}) \,\GRdt +
\int_I S^{\pi}_{h_n}(p_{h_n,\tau_n}, p_{h_n,\tau_n})\, \GRdt
\\&=
\int_I{\int_{\Omega}{ \bm{f} \cdot \bu_{h_n,\tau_n} \,\d x}\,\GRdt}\,.
      \end{aligned}
  \end{align}
  In particular, dropping several non-negative terms on the left-hand side of \eqref{eq:iden_const_rel.2} as well as using, again, that
  $\norm{\smash{\bu_{h_n,\tau_n}(0)}}_{\Lp{2}}\leq \norm{\smash{\bu_0}}_{\Lp{2}}$,
  for every $n\in \mathbb{N}$, from \eqref{eq:iden_const_rel.2}, we obtain
  \begin{align}\label{eq:iden_const_rel.3}
  \begin{aligned}
\int_I \mathcal{A}_{h_n}(\bu_{h_n,\tau_n}; \bu_{h_n,\tau_n}) \,\GRdt &\leq
\int_I{\int_{\Omega}{ \bm{f} \cdot \bu_{h_n,\tau_n} \,\d x}\,\GRdt}\\&\quad-\frac{1}{2}\norm{\smash{\bu_{h_n,\tau_n}(T)}}^2_{\Lp{2}}+\frac{1}{2}\norm{\smash{\bu_0}}^2_{\Lp{2}}\,.
      \end{aligned}
  \end{align}
  Note that, appealing to Lemma \ref{lem:parabolic_interpolation} (for $q=p=2$, $s=p$, and $r=\frac{p}{p-2}\frac{4}{d}$) together with \eqref{eq:velocity_convergences}, we have that
  \begin{align*}
      \bu_{h_n,\tau_n}\to \bu \quad\text{ in }L^r(I;L^p(\Omega)^d)\quad (n\to \infty)\,,
\end{align*}
    so that from $\bm{f} \in C^0(\overline{I};L^{p'}(\Omega)^d)$ and the properties of $I_{\tau_n}$, it follows that
    \begin{align}\label{eq:iden_const_rel.4}
     \begin{aligned}
        \int_I{\int_{\Omega}{ \bm{f} \cdot \bu_{h_n,\tau_n} \,\d x}\,\GRdt}&=\int_I{\int_{\Omega}{ I_{\tau_n}(\bm{f}) \cdot \bu_{h_n,\tau_n} \,\d x}\,\d t}\\
        &\to \int_Q \bm{f} \cdot \bu \,\mathrm{d}t\mathrm{d}x\quad (n\to \infty)\,.
         \end{aligned}
    \end{align}
  Thus, taking the limit superior with respect to $n\to \infty$ in \eqref{eq:iden_const_rel.3}, using \eqref{eq:iden_const_rel.4}, $\bu(0)=\bu_0$ in $L^2_{\textup{div}}(\Omega)$ (cf.\ \eqref{eq:conv_in_0}), \eqref{eq:conv_in_T}, integration-by-parts, and the weak formulation \eqref{eq:equation},
 we find that
  \begin{align*}
    \begin{aligned} 
    \limsup_{n\to \infty}{\int_I {\int_\Omega{\hat{\bm{S}}(\dgrad\bu_{h_n,\tau_n}):\dgrad\bu_{h_n,\tau_n}\,\d x} \,\GRdt}}&\leq\limsup_{n\to \infty}{\int_I \mathcal{A}_{h_n}(\bu_{h_n,\tau_n}; \bu_{h_n,\tau_n}) \,\GRdt}\\&\leq \int_Q \bm{f} \cdot \bu \,\mathrm{d}t\mathrm{d}x-\frac{1}{2}\norm{\smash{\bu(T)}}^2_{\Lp{2}}+\frac{1}{2}\norm{\smash{\bu(0)}}^2_{\Lp{2}}
  	\\&= \int_Q \bm{f} \cdot \bu \,\mathrm{d}t\mathrm{d}x-\left\langle \frac{\mathrm{d}\bu}{\mathrm{d}t},\bu\right\rangle_{L^q(I;X_{\textup{div}}(\Omega))}
  	\\&= \int_Q \bm{S} : \nabla\bu \,\mathrm{d}t\mathrm{d}x\,.
   \end{aligned}
  \end{align*}
  Eventually, taking the limit superior with respect to $n\to \infty$ in
  \begin{align}
  	\begin{aligned}
  		0&\leq  \int_I{ \int_{\Omega}{(\hat{\bm{S}}(\dgrad\bu_{h_n,\tau_n})-\hat{\bm{S}}(\dgrad\bv_{h_n,\tau_n})):(\dgrad\bu_{h_n,\tau_n}-\dgrad\bv_{h_n,\tau_n})\, \d x}\,\GRdt}
  		\\&=\int_I{\int_{\Omega}{ \hat{\bm{S}}(\dgrad\bu_{h_n,\tau_n}):\dgrad\bu_{h_n,\tau_n}\, \d x}\,\GRdt}
  		\\&\quad-\int_I{\int_{\Omega}{ \hat{\bm{S}}(\dgrad\bu_{h_n,\tau_n}):\dgrad\bv_{h_n,\tau_n}\, \d x}\,\GRdt}
  		\\&\quad -\int_I{ \int_{\Omega}{\hat{\bm{S}}(\dgrad\bv_{h_n,\tau_n}):(\dgrad\bu_{h_n,\tau_n}-\dgrad\bv_{h_n,\tau_n})\, \d x}\,\GRdt}\,,
  	\end{aligned}
  \end{align}
  for every $\bv=\phi\bw\in C_0^\infty(Q)^d$, where $\phi\in C_0^\infty(I)$ and $\bw\in C_0^\infty(\Omega)^d$ are arbitrary, we find that
  \begin{align*}
  	0\leq  \int_Q{ (\bm{S}-\hat{\bm{S}}(\nabla\bv)):(\nabla\bu-\nabla\bv)\,\mathrm{d}t\mathrm{d}x}\,.
  \end{align*}
so that the maximal monotonicity of $\hat{\bm{S}}$ yields that $\bm{S}=\hat{\bm{S}}(\nabla\bv)$ in $L^{p'}(I;L^{p'}(\Omega)^{d\times d})$.
\end{proof}
}
\section*{Acknowledgements}
The authors would like to thank the anonymous referees, whose comments and suggestions helped increase the quality of the manuscript.

\appendix
\section{Inf-sup stability}\label{appendix:infsup}

\hspace{5mm}In this section, we will prove the inf-sup inequality \eqref{eq:infsup}, which although not relevant in the results from this paper, is of great importance, e.g., for proving existence and stability of the discrete pressure. The proof follows the argument from \cite[Lm.\ 4.1]{dPE.2010}, where the case $p=2$ is covered.

Let $q_h\in \Mh$ be arbitrary. From the surjectivity of the divergence operator $\rmdiv\colon W^{1,p}_0(\Omega)^d \to L^p_0(\Omega)$ (see, e.g., \cite[Eq.\ III.3.2]{Gal.2011}), we know that there exists $\bv_{q_h}\in W^{1,p}_0(\Omega)^d$ such that
\begin{subequations}
\begin{align}
	\rmdiv \bv_{q_h}& = |q_h|^{p'-2}q_h - \frac{1}{|\Omega|}\int_\Omega |q_h|^{p'-2}q_h\,\d x\,, \label{eq:inverse_divergence}\\
\|\bv_{q_h}\|_{W^{1,p}(\Omega)} 
&\lesssim \norm{q_h}^{p'-1}_{L^{p'}(\Omega)}\,. \label{eq:inverse_divergence_stability}
\end{align}
\end{subequations}
Multiplying \eqref{eq:inverse_divergence} by $q_h\in \Mh $ and integrating-by-parts, we find that
\begingroup
\allowdisplaybreaks
\begin{align*}
\|q_h\|^{p'}_{L^{p'}(\Omega)}
&=
\int_\Omega q_h \rmdiv \bv_{q_h}\,\d x
\\&= 
-\int_\Omega \nabla_h q_h \cdot \bv_{q_h}\,\d x
+ \int_{\intF} \jump{q_h \bn}\cdot \bv_{q_h}\,\d s \\
&= -\int_\Omega \nabla_h q_h \cdot \Pi_{\Vh}\bv_{q_h}\,\d x
+ \int_{\intF} \jump{q_h \bn}\cdot \bv_{q_h} \,\d s \\
&= 
\int_\Omega q_h \rmdiv_h(\Pi_{\Vh}\bv_{q_h})\,\d x
+ \sum_{K\in \Th} \int_{\partial K} q_h \bn_K\cdot \Pi_{\Vh}\bv_{q_h}\,\d s
+ \int_{\intF} \jump{q_h \bn}\cdot \bv_{q_h} \,\d s \\
&=
\int_\Omega q_h \ddiv(\Pi_{\Vh}\bv_{q_h}) \,\d x
+ \int_{\intF} \jump{q_h \bn} \cdot \avg{v_{q_h} - \Pi_{\Vh}\bv_{q_h}}\,\d s\\ 
&= \mathfrak{I}_1 + \mathfrak{I}_2\,,
\end{align*}
\endgroup
where we introduced the $L^2$-orthogonal projection $\Pi_{\Vh}\bv_{q_h}$ of $\bv_{q_h}$ onto $\Vh$ (recall that $k_{\pi}\leq k_{\bu}$). Thus,
\begin{align*}
	|\mathfrak{I}_1| 
	&=
	\frac{\left|\int_\Omega q_h \ddiv(\Pi_{\Vh}\bv_{q_h})\,\d x \right|}{\norm{\Pi_{\Vh}\bv_{q_h}}_{h,p}} \norm{\Pi_{\Vh}\bv_{q_h}}_{h,p}
	\\&\lesssim
	\bigg( 
\sup_{\bw_h\in \Vh} \frac{\int_\Omega q_h \ddiv(\bw_h)\,\d x}{\|\bw_h\|_{h,p}}
	\bigg)
	\|\bv_{q_h}\|_{W^{1,p}(\Omega)} \\
	&\lesssim
	\bigg( 
\sup_{\bw_h\in \Vh} \frac{\int_\Omega q_h \ddiv(\bw_h)\,\d x}{\|\bw_h\|_{h,p}}
	\bigg)
	\|q_h\|^{p'-1}_{L^{p'}(\Omega)},
\end{align*}
where we used the stability of the $L^2$-projector $\|\Pi_{\Vh}\bv_{q_h}\|\lesssim \|\bv_{q_h}\|_{W^{1,p}(\Omega)}$ and \eqref{eq:inverse_divergence_stability}.~To~deal with $\mathfrak{I}_2$, we first note that a local inverse inequality and the approximation properties of $\Pi_{\Vh}$~(recalling~that~$h_K \lesssim h_F$) imply that
\begin{align}
h_F^{-\frac{1}{p}}
\|\bv_{q_h} - \Pi_{\Vh}\bv_{q_h}\|_{L^p(F)} 
\lesssim 
\|\bv_{q_h}\|_{W^{1,p}(K)}\,.
\end{align}
where $F\in \intF \cap \partial K$ for arbitrary $K\in \Th$. Hence,
\begin{align*}
|\mathfrak{I}_2| 
&\leq \sum_{F\in \intF} \|\jump{q_h \bn}\|_{L^{p'}(F)}
\|\avg{\bv_{q_h} - \Pi_{\Vh}\bv_{q_h}}\|_{L^p(F)} \\
&\leq 
\bigg(\sum_{F\in \intF} h_F^{\frac{p'}{p}} \|\jump{q_h \bn}\|^{p'}_{L^{p'}(F)} \bigg)^{\smash{1/p'}}
\bigg(\sum_{F\in \intF} h_F^{-1} \|\avg{\bv_{q_h} - \Pi_{\Vh}\bv_{q_h}} \|^{p}_{L^{p}(F)} \bigg)^{\smash{1/p}} \\
&\lesssim 
S^{\pi}_h(q_h; q_h)^{\frac{1}{p'}} 
\bigg(\sum_{K\in \Th} \|\bv_{q_h}\|^p_{W^{1,p}(\omega)} \bigg)^{\smash{1/p}}
\\
&\lesssim 
S^{\pi}_h(q_h; q_h)^{\frac{1}{p'}} 
\|q_h\|^{p'-1}_{L^{p'}(\Omega)},
\end{align*}
where we used the fact that the number of elements that contain a given facet on their boundary is uniformly bounded from above. This concludes the proof of \eqref{eq:infsup}.

\section{Stability of the exponential interpolant}\label{appendix:stability}
\hspace{5mm} We will now proceed to prove Lemma \ref{lem:exp_stability}. Consider first the stability estimate \eqref{eq:exp_stability_GR_p_star}. Since $\norm{\cdot}_\star$ arises from an inner product, and since the quadrature is exact up to degree $2k_t$, the result is immediate for $s=2$:
\begin{align}\label{eq:exp_stability_GR_2_star}
\int_{I_j} \|\overline{\bv}_{h,\tau}(t)\|^2_\star\, \GRdt
= 
\int_{I_j} \|\overline{\bv}_{h,\tau}(t)\|^2_\star \,\d t
\lesssim
\int_{I_j} \|\bv_{h,\tau}(t)\|^2_\star \,\d t
=
\int_{I_j} \|\bv_{h,\tau}(t)\|^2_\star \,\GRdt\,.
\end{align}
For general $s\in (1,\infty)$, we make use of inverse-type inequalities to go back to the $s=2$ case and, then, use \eqref{eq:exp_stability_GR_2_star}. Namely, we claim that for $r,s\in (1,\infty)$ we have for a function $g\in C(I_j)$ 
\begin{align}\label{eq:inverse_ineq_GR}
	\bigg( \int_{I_j} |g(t)|^r \GRdt \bigg)^{\smash{1/r}}
	\lesssim
	\tau^{\frac{r-s}{rs}}
	\bigg( \int_{I_j} |g(t)|^s \GRdt \bigg)^{\smash{1/s}}\,.
\end{align}
To see this, suppose first that $s\geq r$. Then, using Hölder’s inequality, we find that
\begin{align*}
\int_{I_j} |g(t)|^r \GRdt \leq 
\bigg(\sum_{l=1}^{k_t+1} \omega_l^j |g(\xi^j_l)|^s \bigg)^{\smash{r/s}}
\bigg(\sum_{l=1}^{k_t+1} \omega_l^j \bigg)^{\smash{(s-r)/s}}\,.
\end{align*}
Recalling that $\sum_{l=1}^{k_t+1}\omega_l^j = |I_j| \leq \tau$ yields the claim. Suppose now that $s\leq r$; assume also for the moment that $\int_{I_j}|g(t)|^s\, \GRdt \!=\! 1$; this implies for all $l\!\in\!\{1,\ldots, k_t+1\}$~that~${\omega_l^j |g(\xi_l^j)|^s \!\leq \!1}$.~Then,~since~${\frac{r}{s}\!\geq\! 1}$, one has that $\smash{(\omega_l^j)^{r/s} |g(\xi^j_l)|^r \leq \omega_l^j |g(\xi_l^j)|^s}$. Hence, we have that
\begin{align*}
	\bigg(\sum_{l=1}^{k_t} (\omega_l^j)^{\frac{r-s}{s}} \omega_l^j |g(\xi_l^j)|^r \bigg)^{\smash{1/r}} 
	\leq 1\,,
\end{align*}
and so
\begin{align*}
	\bigg(\int_{I_j} |g(t)|^r \GRdt \bigg)^{\smash{1/r}} 
	\leq 
	\bigg(\min_{l\in \{1,\ldots, k_t+1\}} \omega_l^j \bigg)^{\smash{(s-r)/rs}}
	=
	\bigg( |I_j| \min_{l\in \{1,\ldots, k_t+1\}} \omega_l \bigg)^{\smash{(s-r)/rs}}\,,
\end{align*}
where we expressed the weights in terms of those on the reference interval $\hat{I}$ (which are known data); the claim \eqref{eq:inverse_ineq_GR} then follows from homogeneity and the quasi-uniformity \eqref{eq:time_quasiuniform} of the time discretisation.

We now turn to the proof of \eqref{eq:exp_stability_p}. Denote by $\mathcal{T}_F$, the patch of elements sharing~a~facet~${F\in  \Fh}$. The first important observation, consequence of the equivalence of norms on finite-dimensional spaces and a scaling argument, is the following:
\begin{subequations}\label{eq:norm_equivalence}
\begin{align}
	\bigg(\int_{I_j} \|\bv(t)\|^s_{h,s}\, \d t \bigg)^{\smash{1/s}}
	&\hspace{-0.1em}\lesssim\hspace{-0.1em}
	\tau^{\frac{1}{s}}
	\max_{t\in I_j} \left[\|\BD_h(\bv)(t)\|_{\Lp{s}} \hspace{-0.1em}+\hspace{-0.1em} |\bv(t)|_{\Gamma_h,s} \right]\hspace{-0.1em},\label{eq:norm_equivalence1}\\ 
	\max_{t\in I_j} \left[\|\BD\bv(t)\|_{\Lp[K]{s}} \hspace{-0.1em}+ \hspace{-0.1em}\|\bv(t)\|_{\Lp[K]{s}} \right]
	&\hspace{-0.1em}\lesssim\hspace{-0.1em}
	\tau^{-\frac{1}{s}}
	\bigg(\int_{I_j} \left[\|\BD\bv(t)\|^s_{\Lp[K]{s}} \hspace{-0.1em}+ \hspace{-0.1em}\|\bv(t)\|^s_{\Lp[K]{s}} \right] \,\d t \bigg)^{\smash{1/s}} \hspace{-0.1em},\label{eq:norm_equivalence2}\\
	\max_{t\in I_j} \left[\|h_F^{\frac{-1}{s'}}\jump{\bv(t)\otimes \bn}\|_{L^s(F)} \hspace{-0.1em}+ \hspace{-0.1em}\|\bv(t)\|_{\Lp[\mathcal{T}_F]{s}} \right]
	&\hspace{-0.1em}\lesssim\hspace{-0.1em}
	\tau^{-\frac{1}{s}}
	\bigg(\int_{I_j} \left[h_F^{1-s}\|\jump{\bv(t)\otimes \bn}\|_{L^s(F)}^s \hspace{-0.1em}+\hspace{-0.1em} \|\bv(t)\|^s_{\Lp[\mathcal{T}_F]{s}} \right] \,\d t \bigg)^{\smash{1/s}}\hspace{-0.1em},\label{eq:norm_equivalence3} 
\end{align}
\end{subequations}
which holds, respectively, for $\bv$ belonging to the spaces~$\P_{k_t}(I_j;\Vh)$,~$\P_{k_t}(I_j;\P_{k_{\bu}}(K))$,~and~$\P_{k_t}(I_j;\P_{k_{\bu}}(\mathcal{T}_F))$, since each line defines norms on the respective spaces. Thus, for an arbitrary $\bv\in \P_{k_t}(I_j; \Vh)$, we obtain
\begingroup
\allowdisplaybreaks
\begin{align*}
	\int_{I_j} \|\overline{\bv}&(t)\|^s_{h,s} \,\d t
 \overset{\eqref{eq:norm_equivalence1}}{\lesssim}
\tau \max_{t\in I_j} \left[\|\BD_h(\overline{\bv})(t)\|_{\Lp{s}} + |\overline{\bv}(t)|_{\Gamma_h,s} \right]^s
\lesssim
\tau \max_{t\in I_j} \left[\|\BD_h(\overline{\bv})(t)\|^s_{\Lp{s}} + |\overline{\bv}(t)|^s_{\Gamma_h,s} \right]\\
&\lesssim
\tau \max_{t\in I_j} \left[\sum_{K\in \Th} h_K^{ds(\frac{1}{s} - \frac{1}{2})}\|\BD\overline{\bv}(t)\|^s_{\Lp[K]{2}} +
 \sum_{F\in \Fh} \frac{h_F^{(d-1)s(\frac{1}{s} - \frac{1}{2})}}{h_F^{s-1}}\|\jump{\overline{\bv}(t)\otimes \bn}\|^s_{L^2(F)} \right]\\
&\leq
\tau  \sum_{K\in \Th} h_K^{ds(\frac{1}{s} - \frac{1}{2})} \max_{t\in I_j}\|\BD\overline{\bv}(t)\|^s_{\Lp[K]{2}} +
\tau \sum_{F\in \Fh} \frac{h_F^{(d-1)s(\frac{1}{s} - \frac{1}{2})}}{h_F^{s-1}} \max_{t\in I_j}\|\jump{\overline{\bv}(t)\otimes \bn}\|^s_{L^2(F)} \\
&\overset{\mathrlap{\hspace{-1.1em}\eqref{eq:exp_stability_p_star}}}{\lesssim}
\tau  \sum_{K\in \Th} h_K^{ds(\frac{1}{s} - \frac{1}{2})} \max_{t\in I_j}\|\BD\bv(t)\|^s_{\Lp[K]{2}} +
\tau \sum_{F\in \Fh} \frac{h_F^{(d-1)s(\frac{1}{s} - \frac{1}{2})}}{h_F^{s-1}} \max_{t\in I_j}\|\jump{\bv(t)\otimes \bn}\|^s_{L^2(F)} \\
&\lesssim
\tau  \sum_{K\in \Th} \max_{t\in I_j}\|\BD\bv(t)\|^s_{\Lp[K]{s}} +
\tau \sum_{F\in \Fh} h_F^{1-s} \max_{t\in I_j}\|\bv(t)\|^s_{L^s(F)} \\
&\leq
\tau  \sum_{K\in \Th} \left(\max_{t\in I_j} [\|\BD\bv\|_{\Lp[K]{s}}  + \norm{\bv}_{L^s(K)}] \right)^s+
\tau \sum_{F\in \Fh} \left( \max_{t\in I_j}[  \| h_F^{\frac{-1}{s'}}\bv\|_{L^s(F)} + \norm{\bv}_{L^s(\mathcal{T}_F)}] \right)^s \\
&\overset{\mathrlap{\hspace{-2.2em}\eqref{eq:norm_equivalence2}\eqref{eq:norm_equivalence3}}}{\lesssim}
\tau  \sum_{K\in \Th} \bigg(\tau^{\frac{-1}{s}}\bigg[\int_{I_j} \norm{\BD\bv(t)}^s_{L^s(K)} + \norm{\bv(t)}^s_{L^s(K)} \,\d t \bigg]^{\frac{1}{s}} \bigg)^s \\
&\hphantom{aaifjas} +
\tau \sum_{F\in \Fh} \bigg( \tau^{\frac{-1}{s}} \bigg[ \int_{I_j} \|h_F^{\frac{-1}{s'}} \jump{\bv(t)\otimes \bn}\|^s_{L^s(F)} + \norm{\bv(t)}^s_{L^s(\mathcal{T}_F)} \bigg]^{\frac{1}{s}} \bigg)^s \\
&\overset{\eqref{eq:korn}}{\lesssim} \int_{I_j} \|\bv(t)\|^s_{h,s} \,\d t,
\end{align*}
\endgroup
where in the final line we also used the fact that the number of elements sharing a facet is uniformly bounded from above. This yields \eqref{eq:exp_stability_p}.

The proof of \eqref{eq:exp_stability_GR_p} follows the same reasoning as above, but where the maximum is taken over the quadrature points (i.e.,  $\max_{t\in I_j} \mapsto \max_{l\in\{1,\ldots, k_t+1\}}$). For this, we require the analogous inequalities to \eqref{eq:norm_equivalence} but integrating with respect to the discrete measure $\GRdt$; the analogous inequality to \eqref{eq:norm_equivalence1} is straightforward:
\begin{align*}
	\bigg( \int_{I_j} \|\bv(t)\|^s_{h,s} \GRdt \bigg)^{\smash{1/s}}
	&\leq 
	\bigg(\sum_{l=1}^{k_t+1}\omega_l^j \bigg)^{\smash{1/s}}
	\max_{l\in\{1,\ldots, k_t+1\}} \left[\norm{\smash{\smash{\BD_h\smash{\bv(\xi_l^j)}}}}^s_{L^s(\Omega)} +  |\smash{\bv(\xi_l^j)}|^s_{\Gamma_h,s} \right]^{\smash{1/s}} \\
	&\leq 
	\tau
	\max_{l\in\{1,\ldots, k_t+1\}} \left[\norm{\smash{\BD_h\smash{\bv(\xi_l^j)}}}_{L^s(\Omega)} +  |\smash{\bv(\xi_l^j)}|_{\Gamma_h,s} \right]\,. 
\end{align*}
To prove the analogous inequality to \eqref{eq:norm_equivalence2}, let $\tilde{l}\in \{1,\ldots, k_t+1\}$ be such that
\begin{align*}
\left[\norm{\smash{\BD\smash{\bv(\xi_{\tilde{l}}^j)}}}_{L^s(K)} + \norm{\smash{\bv(\xi_{\tilde{l}}^j)}}_{L^s(K)}\right]^s
=
\max_{l\in\{1,\ldots,k_t+1\}} \left[\norm{\BD\smash{\bv(\xi_l^j)}}_{L^s(K)} + \norm{\smash{\bv(\xi_l^j)}}_{L^s(K)}\right]^s\,.
\end{align*}
Then we have that
\begin{align*}
&	\left(\min_{l\in\{1,\ldots, k_t+1\}} \omega_{l}^j \right)\max_{l\in\{1,\ldots,k_t+1\}} \left[\norm{\BD\smash{\bv(\xi_l^j)}}_{L^s(K)} + \norm{\smash{\bv(\xi_l^j)}}_{L^s(K)}\right]^s \\
&\qquad \leq \omega_{\tilde{l}}^j  
\left[\norm{\smash{\BD\smash{\bv(\xi_{\tilde{l}}^j)}}}_{L^s(K)} + \norm{\smash{\bv(\xi_{\tilde{l}}^j)}}_{L^s(K)}\right]^s
 \lesssim \omega_{\tilde{l}}^j  
\left[\norm{\smash{\BD\smash{\bv(\xi_{\tilde{l}}^j)}}}^s_{L^s(K)} + \norm{\smash{\bv(\xi_{\tilde{l}}^j)}}^s_{L^s(K)}\right] \\
&\qquad \leq 
\sum_{l=1}^{k_t+1} \omega_l^j \left[\norm{\smash{\BD\bv(\xi_{l}^j)}}^s_{L^s(K)} + \norm{\smash{\bv(\xi_{l}^j)}}^s_{L^s(K)}\right] 
= \int_{I_j} \|\BD\bv(t)\|^s_{L^s(K)} + \|\bv(t)\|^s_{L^s(K)} \,\GRdt\,.
\end{align*}
Recalling again that $\tau \lesssim \min_{l\in\{1,\ldots, k_t+1\}} \omega_l^j$, thanks to the quasi-uniformity \eqref{eq:time_quasiuniform} and to the relation of the weights to those on the reference interval, yields the estimate \eqref{eq:norm_equivalence2}. The proof of \eqref{eq:norm_equivalence3} follows a similar argument. This concludes the proof of Lemma \ref{lem:exp_stability}.

\bibliographystyle{plain}
\bibliography{./literature}

\end{document}